\crefname{section}{Section}{Sections}
\Crefname{section}{Section}{Sections}
\crefname{equation}{}{}
\Crefname{equation}{}{}
\crefname{proposition}{Proposition}{Propositions}
\crefname{lemma}{Lemma}{Lemmas}
\crefname{theorem}{Theorem}{Theorems}
\crefname{corollary}{Corollary}{Corollaries}
\Crefname{corollary}{Corollary}{Corollaries}
\crefname{definition}{Definition}{Definitions}
\Crefname{definition}{Definition}{Definitions}
\theoremstyle{plain}
\newtheorem{theorem}{Theorem}[section]
\newtheorem{corollary}[theorem]{Corollary}
\newtheorem{proposition}[theorem]{Proposition}
\newtheorem{lemma}[theorem]{Lemma}
\theoremstyle{definition}
\newtheorem{definition}[theorem]{Definition}
\newtheorem{note}[theorem]{Note}
\newtheorem{remark}[theorem]{Remark}
\numberwithin{equation}{section}
\newcommand{\mb}[1]{\mathbb{#1}}
\newcommand{\mc}[1]{\mathcal{#1}}
\newcommand{\ie}{i.e.~}
\newcommand{\grad}{\nabla}
\newcommand{\ind}{\mathds{1}}
\newcommand{\cov}{\mathrm{cov}}
\renewcommand{\to}{\downarrow}
\newcommand{\bracket}[1]{\left[ #1 \right]}
\newcommand{\p}[1]{\mathbb{P}\bracket{#1}}
\newcommand{\e}[1]{\mathbb{E}\bracket{#1}}
\newcommand{\ipd}[2]{ \left< #1, \, #2 \right> }
\newcommand{\dipd}[2]{ \ipd{#1}{#2}_\grad }
\title{Liouville Brownian Motion \\ and Thick Points of the Gaussian Free Field}
\author{Henry Jackson\footnote{Statistical Laboratory, University of Cambridge. Research supported by EPSRC grant EP/H023348/1 for the Cambridge Centre for Analysis}}
\date{\today}
\begin{document}
\maketitle

\begin{abstract}
	We find a lower bound for the Hausdorff dimension that a Liouville Brownian motion spends in $\alpha$-thick points of the Gaussian Free Field, where $\alpha$ is not necessarily equal to the parameter used in the construction of the geometry. This completes a conjecture in \cite{berestycki2013diffusion}, where the corresponding upper bound was shown. 

	In the course of the proof, we obtain estimates on the (Euclidean) diffusivity exponent, which depends strongly on the nature of the starting point. For a Liouville typical point, it is $1/(2 - \frac{\gamma^2}{2})$. In particular, for $\gamma > \sqrt{2}$, the path is Lebesgue-almost everywhere differentiable, almost surely. This provides a detailed description of the multifractal nature of Liouville Brownian motion. 
\end{abstract}

\noindent\textbf{Key words or phrases:} Liouville quantum gravity, Liouville Brownian motion, Gaussian multiplicative chaos.

\noindent\textbf{MSC 2000 subject classifications: 60J60, 60D05, 28A80, 81T40}

\section{Introduction}\label{section:intro}

The goal of this paper is to study the multifractal nature of Liouville Brownian motion. This is a process which was introduced in \cite{berestycki2013diffusion} and \cite{garban2013liouville} as the canonical diffusion in planar Liouville quantum gravity. For instance, it is the conjectured scaling limit of a simple random walk on a uniform random triangulation, conformally embedded into the plane (via circle packing, for example). Liouville quantum gravity and its geometry has itself been at the centre of remarkable developments. We point out, among many other works, \cite{rhodes2011kpz,duplantier2011liouville,berestycki2014kpz}. 

Liouville Brownian motion is a useful tool for studying the geometry of Liouville quantum gravity. In fact, Watabiki has already considered the object (in a non rigorous way) in an attempt to describe the metric and fractal structure of Liouville quantum gravity \cite{watabiki1993analytic}. This led him to propose a formula for the Hausdorff dimension of the random metric space. The paper \cite{berestycki2014kpz} may be viewed as a first rigorous step in studying such multifractal aspects using Liouville Brownian motion. The current paper addresses a similar point, but from a different perspective. 

The general structure of the paper is as follows. In the remainder of this section, we state the main results and try to give the intuitive idea behind the proof. In \cref{section:setup} we will briefly introduce the objects and definitions we use throughout the paper, providing references for the reader should they need more detail. In \cref{subs:moments} we show that the time change function has finite moments around times when the Brownian motion is conditioned to be in a thick point, and derive crude tail estimates for the time change process from those bounds. \cref{subsec:scaling} is spent proving a simple large-deviation type result for the supremum of the harmonic projection of the Gaussian Free Field on to a disc, to use as an analogue of the scaling relation enjoyed by exactly stochastically scale invariant fields. In \cref{subsec:holder} we combine the results from the previous sections to show H\"older like properties of the time change function $F_\gamma$. Finally, in \cref{subs:proofs,subs:regularity}, we prove the main theorems, using the regularity results obtained in \cref{subsec:holder}.

\subsection{Statement of results}\label{subs:statement}

Let $h$ be a zero boundary Gaussian Free Field, defined in a simply connected proper domain $\mc D \subset \mb C$. One of the difficulties of working with a GFF is that it is not defined as a function, so we cannot say what value $h(z)$ takes, for $z \in \mc D$. However, it is regular enough that we can talk about its average value on a set. We will usually take that set to be the circle of radius $\varepsilon >0$ centred at a point $z \in \mc D$, and call that average $h_\varepsilon(z)$. Let $\left\{ h_\varepsilon(z) \; ; \; \varepsilon > 0 \right\}_{z \in \mc D}$ be the circle averages of $h$. We will define both the GFF and its averages more precisely in \cref{subs:gff}. For $\alpha > 0$, the set $\mc T_\alpha$ of $\alpha$-thick points is given by
\begin{equation*}
	\mc T_\alpha = \left\{ z \in \mc D \, : \, \lim_{\varepsilon \to 0}\frac{h_\varepsilon(z)}{\log \frac{1}{\varepsilon}} = \alpha \right\}.
\end{equation*}
By a theorem in \cite{hu2010thick}, it is known that the Hausdorff dimension of $\mc T_\alpha$ is
\begin{equation*}
	\dim_H(\mc T_\alpha) = \max\left(0, 2 - \frac{\alpha^2}{2}\right)
\end{equation*}
almost surely. 

Let $0 < \gamma < 2$. We will denote by $Z^\gamma$ a $\gamma$-Liouville Brownian motion, formally defined as follows. Let $B$ be a Brownian motion killed upon leaving $\mc D$. We define its clock process, $F_\gamma$, to be
\begin{equation*}
	F_\gamma(t) = \int_0^t e^{\gamma h(B_s) - \frac{\gamma^2}{2} \mb E [h(B_s)^2]}ds,
\end{equation*}
and the LBM is given by $Z^\gamma_t = B_{F^{-1}_\gamma(t)}$. It is not trivial to make sense of this definition. This was done in \cite{berestycki2013diffusion} and \cite{garban2013liouville}, where further properties were also proved. We recall the construction more precisely in \cref{subs:lbm}. 

The main goal of this paper is to prove the following bound:

\begin{theorem}
	Let $\alpha, \gamma \in [0,2)$, and let $Z^\gamma$ denote a $\gamma$-Liouville Brownian motion. Then%]
	\begin{equation*}
		\dim_H(\left\{ t \, : \, Z^\gamma_t \in \mc T_\alpha \right\}) \geq \frac{1-\frac{\alpha^2}{4}}{1 - \frac{\alpha\gamma}{2}+\frac{\gamma^2}{2}},
	\end{equation*}
	almost surely, where $\dim_H$ refers to the Hausdorff dimension.
	\label{thm:lower.bound}
\end{theorem}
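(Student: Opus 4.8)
The plan is to produce a random measure $\nu$ carried by $\mc S:=\{t\ge 0:Z^\gamma_t\in\mc T_\alpha\}$ whose lower local dimension is at least $d:=\bigl(1-\tfrac{\alpha^2}{4}\bigr)\big/\bigl(1-\tfrac{\alpha\gamma}{2}+\tfrac{\gamma^2}{2}\bigr)$ at $\nu$-almost every point; the mass distribution principle (Billingsley's $\liminf$ form of Frostman's lemma) then gives $\dim_H\mc S\ge\dim_H(\operatorname{supp}\nu)\ge d$. The cases $\alpha=0$ or $\gamma=0$ are classical (then $\mc S$ has full Lebesgue measure, resp.\ $Z^\gamma=B$ and the bound reduces to the dimension of the times a planar Brownian motion spends in $\mc T_\alpha$), so we assume $\alpha,\gamma\in(0,2)$.

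To build $\nu$, note that $B$ and $h$ are independent and $\mc S=F_\gamma(\mc S^B)$ with $\mc S^B:=\{u\ge 0:B_u\in\mc T_\alpha\}$, so it suffices to produce a measure on $\mc S^B$ and push it forward by $F_\gamma$. Conditionally on $B$, the Gaussian field $u\mapsto h_\varepsilon(B_u)$ is logarithmically correlated in $u$ with covariance $\tfrac12\log\tfrac1{|u-u'|}+O(1)$, the factor $\tfrac12$ reflecting that the Brownian path is two-dimensional. Its multiplicative chaos of parameter $\alpha$ against $du$,
\[
\sigma:=\lim_{\varepsilon\to 0}\varepsilon^{\alpha^2/2}\,e^{\alpha h_\varepsilon(B_u)}\,du,
\]
is therefore subcritical --- hence non-degenerate --- precisely when $\alpha<2$; it is supported on $\{u:h_\varepsilon(B_u)/\log\tfrac1\varepsilon\to\alpha\}=\mc S^B$, and the standard truncated second--moment analysis for chaos measures gives $\sigma\bigl((u-\ell,u+\ell)\bigr)=\ell^{\,1-\alpha^2/4+o(1)}$ for $\sigma$-a.e.\ $u$ as $\ell\to 0$, so in particular $\dim_H(\operatorname{supp}\sigma)=1-\tfrac{\alpha^2}{4}$. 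Set $\nu:=(F_\gamma)_*\sigma$; it is carried by $\mc S$, and on the Liouville clock it is intrinsically the ``relative thick--point'' chaos $\lim_{\varepsilon\to 0}\varepsilon^{(\alpha^2-\gamma^2)/2}\,e^{(\alpha-\gamma)h_\varepsilon(Z^\gamma_t)}\,dt$. (Alternatively $\sigma$ is the continuous additive functional of $B$ attached to the $\alpha$-Liouville measure, with a truncation inserted when $\alpha\ge\sqrt2$ so the Green energy is finite; the path--chaos description handles all $\alpha<2$ uniformly.)

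It remains to transfer the local scaling of $\sigma$ from $u$-time to $t$-time, which is exactly the point of the H\"older-type regularity of $F_\gamma$ obtained in \cref{subsec:holder}: almost surely, for $\sigma$-a.e.\ $u$ and all small $\delta$, the increment of $F_\gamma$ across a $u$-interval of length $\delta$ centred at $u$ is at least $\delta^{\,1-\alpha\gamma/2+\gamma^2/2+o(1)}$ (an ``anti-flatness'' lower bound on the clock near points of $\mc T_\alpha$). Granting it, for $\nu$-typical $t$ put $u:=F_\gamma^{-1}(t)$, so $B_u\in\mc T_\alpha$; then $F_\gamma^{-1}\bigl((t-r,t+r)\bigr)$ is an interval about $u$ of length at most $r^{\,1/(1-\alpha\gamma/2+\gamma^2/2)+o(1)}$, whence
\[
\nu\bigl(B(t,r)\bigr)=\sigma\Bigl(F_\gamma^{-1}\bigl((t-r,t+r)\bigr)\Bigr)\le r^{\,\frac{1-\alpha^2/4}{1-\alpha\gamma/2+\gamma^2/2}+o(1)}=r^{\,d+o(1)},
\]
so $\liminf_{r\to 0}\log\nu(B(t,r))/\log r\ge d$ for $\nu$-a.e.\ $t$, and the mass distribution principle concludes.

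The whole argument rests on the anti-flatness estimate for $F_\gamma$ --- a \emph{lower} bound on the time change across short intervals, valid simultaneously at all dyadic scales and at every thick point --- and this is where \cref{subs:moments} and \cref{subsec:scaling} enter. Conditionally on $h$ the clock increment is a Brownian occupation integral of the Liouville density, so one needs both a lower bound on the Euclidean room $B$ explores in time $\delta$ and a lower bound on the Liouville mass of the corresponding disc near an $\alpha$-thick point; the quantitative upper-- and negative--moment bounds for $F_\gamma$ from \cref{subs:moments}, together with the large-deviation control of \cref{subsec:scaling} on the supremum of the harmonic part of $h$ on a disc --- the substitute, for the genuine GFF, of the exact scaling available for $\star$-scale-invariant fields --- are what make these bounds uniform over scales and over thick points. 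A secondary subtlety is that $\sigma$ and $F_\gamma$ are built from the same pair $(h,B)$ and are positively correlated (the chaos favours thick spots, where the clock also advances fastest in Euclidean terms), so the size-biasing implicit in ``$\nu$-typical $t$'' is best handled by a rooting/Girsanov change of measure, under which $B_u$ acquires a genuine $\alpha\log$ singularity of $h$; it is cleanest to establish the regularity estimates under this tilted law from the outset.
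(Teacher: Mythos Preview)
Your approach is essentially the paper's. Your $\sigma$ is exactly the paper's $\mu_\alpha$ (the $\alpha$-Liouville clock viewed as a measure on Brownian time), and the two ingredients you isolate --- the anti-flatness lower bound on increments of $F_\gamma$ at $\sigma$-typical times, and the upper local-dimension estimate $\sigma((u-\ell,u+\ell))\le\ell^{1-\alpha^2/4+o(1)}$ --- are precisely \cref{prop:lower} and \cref{prop:upper} (the latter specialised to $\gamma=\alpha$); the Girsanov rooting you describe at the end is exactly how those propositions are proved (\cref{lem:size.bound,lem:size.bound.2}). The only difference is packaging: you push $\sigma$ forward by $F_\gamma$ and apply the mass distribution principle once on the LBM time axis, whereas the paper applies it on the Brownian time axis to obtain $\dim_H(T_\alpha\cap L^N_\gamma\cap U^N_\alpha)\ge 1-\tfrac{\alpha^2}{4}-\delta$ and then transfers to $F_\gamma(T_\alpha)$ via the H\"older-like bound of \cref{prop:holder}. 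These are equivalent.

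One numerical slip: the anti-flatness exponent is $\beta=1-\tfrac{\alpha\gamma}{2}+\tfrac{\gamma^2}{4}$, not $1-\tfrac{\alpha\gamma}{2}+\tfrac{\gamma^2}{2}$; the statement you were handed carries a typo in the denominator, and your exponent inherits it. With the correct $\beta$ the bound reads $(1-\tfrac{\alpha^2}{4})/\beta$ and equals $1$ at $\alpha=\gamma$, as it must since $Z^\gamma$ spends Lebesgue-a.e.\ time in $\mc T_\gamma$.
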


The proof of \cref{thm:lower.bound} follows similar lines as the proof of Theorem 4.1 in \cite{rhodes2013gaussian}. Long range correlations introduced by the Brownian motion created a few more technical difficulties to overcome. The authors proved their result for an exactly stochastically scale invariant field, and claimed that the result generalised from that to all log-correlated Gaussian fields. We were also unable to follow the generalisation of their proof, which used a non-trivial application of Kahane's convexity inequality -- we instead had to rely on \cref{lem:gff.scaling}.

\cref{thm:lower.bound}, combined with Theorem 1.4 in \cite{berestycki2013diffusion}, gives us the following corollary:

\begin{corollary}
	Let $\alpha, \gamma \in [0,2)$, and let $Z^\gamma$ denote a $\gamma$-Liouville Brownian motion. Then%]
	\begin{equation*}
		\dim_H(\left\{ t \, : \, Z^\gamma_t \in \mc T_\alpha \right\}) = \frac{1-\frac{\alpha^2}{4}}{1 - \frac{\alpha\gamma}{2}+\frac{\gamma^2}{2}},
	\end{equation*}
	almost surely.
	\label{thm:upper.bound}
\end{corollary}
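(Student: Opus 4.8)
The plan is to derive the claimed identity by sandwiching the Hausdorff dimension between two bounds that are now both available. The lower bound is exactly \cref{thm:lower.bound}: almost surely,
\[
	\dim_H\left(\left\{ t \, : \, Z^\gamma_t \in \mc T_\alpha \right\}\right) \geq \frac{1-\frac{\alpha^2}{4}}{1 - \frac{\alpha\gamma}{2}+\frac{\gamma^2}{2}}.
\]
The reverse inequality is supplied by Theorem 1.4 of \cite{berestycki2013diffusion}, which establishes the matching upper bound for the Hausdorff dimension of the set of times at which a $\gamma$-Liouville Brownian motion sits in an $\alpha$-thick point, using the same GFF (zero boundary conditions, circle averages $h_\varepsilon$), the same renormalised clock $F_\gamma$, and the same parameter $\gamma$ as here. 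Each of these two statements holds on an event of full probability; intersecting the two events gives a single event of probability one on which equality holds, which is the assertion of the corollary.

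Two routine points should be checked along the way. First, the right-hand side is a genuine positive real number throughout the admissible parameter range: for $\alpha,\gamma \in [0,2)$ the numerator $1-\frac{\alpha^2}{4}$ is positive, and doubling the denominator gives $2-\alpha\gamma+\gamma^2 \geq 2 - 2\gamma + \gamma^2 = (\gamma-1)^2 + 1 > 0$ since $\alpha < 2$. Second, one wants the equality to hold almost surely for the whole family of parameters at once, so one should confirm that \cref{thm:lower.bound} and the cited upper bound are both stated in a form valid simultaneously over the relevant range of $\alpha$ (for instance by monotonicity in $\alpha$, or by a union bound over a countable dense set together with continuity), so that no further union bound over $\alpha$ is needed here.

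The only real obstacle is bookkeeping: one must make sure that ``Theorem 1.4 of \cite{berestycki2013diffusion}'' is literally the matching upper bound for the quantity studied in this paper, rather than an upper bound for a superficially different object. Concretely, this means matching the normalisation of the GFF and of $F_\gamma$ (the $-\frac{\gamma^2}{2}\mb E[h(B_s)^2]$ subtraction), the boundary conditions on $\mc D$, the definition of the circle averages $h_\varepsilon(z)$ and of the thick-point set $\mc T_\alpha$ via $\lim_{\varepsilon \to 0} h_\varepsilon(z)/\log\frac{1}{\varepsilon} = \alpha$, and the parametrisation of the Liouville geometry by $\gamma$. Once these conventions are seen to agree, the proof is complete with nothing further to prove.
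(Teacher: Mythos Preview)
Your approach is the same as the paper's: combine the lower bound from \cref{thm:lower.bound} with the upper bound from Theorem~1.4 of \cite{berestycki2013diffusion}. The one item you leave open is precisely the ``bookkeeping'' you flag, and the paper's proof shows how it actually goes: the cited upper bound is not stated directly for $\mc T_\alpha$ but for the one-sided sets
\[
\mc T^-_\alpha = \Big\{ z : \liminf_{\varepsilon \to 0} \frac{h_\varepsilon(z)}{\log \tfrac{1}{\varepsilon}} \ge \alpha \Big\}
\quad\text{when }\alpha > \gamma,
\qquad
\mc T^+_\alpha = \Big\{ z : \limsup_{\varepsilon \to 0} \frac{h_\varepsilon(z)}{\log \tfrac{1}{\varepsilon}} \le \alpha \Big\}
\quad\text{when }\alpha < \gamma.
\]
Since $\mc T_\alpha \subset \mc T^\pm_\alpha$, monotonicity of Hausdorff dimension then yields the upper bound for $\{t : Z^\gamma_t \in \mc T_\alpha\}$ in each case, and sandwiching with \cref{thm:lower.bound} finishes. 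Your second worry, about simultaneous validity over all $\alpha$, is unnecessary here: the corollary is for fixed $\alpha,\gamma$, so intersecting two probability-one events is all that is needed.
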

Similar results for diffusions on deterministic fractals were given in \cite{hambly2003diffusion}.

The key to our proof is good estimates on the regularity of the time change $F_\gamma$ around $\alpha$-thick points. For a given $\alpha$, we do not get the regularity results around all of the $\alpha$-thick points, but we do get it around almost all of them, for the correct choice of measure. If we define the measure $\mu_\alpha$ by setting, for $s \leq t$,
\begin{equation*}
	\mu_\alpha([s,t]) = F_\alpha(t) - F_\alpha(s),
\end{equation*}
then we will show that $F_\gamma$ behaves polynomially for $\mu_\alpha$-almost every $t$, in the following sense:
\begin{theorem}
	For $\mu_\alpha$-almost every $t > 0$, the change of time $F_\gamma$ has the following growth rate:
	\begin{equation*}
		\lim_{r \rightarrow 0} \frac{\log |F_\gamma(t) - F_\gamma(t+r)|}{\log |r|} = 1 - \frac{\alpha\gamma}{2} + \frac{\gamma^2}{4},
	\end{equation*}
	almost surely. 
	\label{thm:regularity}
\end{theorem}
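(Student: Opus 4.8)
The plan is to reduce the theorem to two one-sided polynomial estimates at geometric scales and then pass to a limit. Write $\beta = 1 - \tfrac{\alpha\gamma}{2} + \tfrac{\gamma^2}{4}$ for the target exponent. Since $F_\gamma(t) = \int_0^t e^{\gamma h(B_s) - \frac{\gamma^2}{2}\e{h(B_s)^2}}ds$ is continuous and non-decreasing, for each fixed $t$ the quantity $|F_\gamma(t) - F_\gamma(t+r)|$ is monotone in $r$ on each side of the origin. Sandwiching $r$ between consecutive powers of two then shows that the limit in the statement exists for all $r \rightarrow 0$, with the same value, as soon as it exists along the geometric sequence $r_n = 2^{-n}$. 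Hence it is enough to prove that for every $\delta > 0$ there is an almost sure event on which, for $\mu_\alpha$-almost every $t$,
\begin{equation*}
	r_n^{\,\beta+\delta} \;\le\; |F_\gamma(t) - F_\gamma(t+r_n)| \;\le\; r_n^{\,\beta-\delta}
	\qquad \text{for all $n$ large enough,}
\end{equation*}
and then to intersect over a sequence $\delta \downarrow 0$. These two one-sided bounds are of the H\"older type established in \cref{subsec:holder}; what remains here is to package them and take $\delta \downarrow 0$.

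The mechanism at each scale is a first-moment bound on the $\mu_\alpha$-mass of the bad set together with Borel--Cantelli. Fix $T$ and set, for the upper bound, $A_n = \{t \in [0,T] : F_\gamma(t+r_n) - F_\gamma(t) > r_n^{\beta-\delta}\}$. By the definition of $\mu_\alpha$ and Fubini,
\begin{equation*}
	\e{\mu_\alpha(A_n)} = \int_0^T \e{\ind_{A_n}(t)\; e^{\alpha h(B_t) - \frac{\alpha^2}{2}\e{h(B_t)^2}}}\, dt,
\end{equation*}
and the Wick-normalised exponential weight is exactly a Cameron--Martin tilt of the field: under the tilted law $h$ is shifted by $\alpha\,\e{h(\cdot)h(B_t)}$, so that $B_t$ becomes an $\alpha$-thick point and the total tilt mass is one. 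Thus $\e{\mu_\alpha(A_n)}$ equals $\int_0^T \p{F_\gamma(t+r_n)-F_\gamma(t) > r_n^{\beta-\delta}}$ computed under this tilt, and the tilted increment $F_\gamma(t+r_n)-F_\gamma(t)$ is precisely the object whose moments near a thick point are controlled in \cref{subs:moments}: its typical size is of order $r_n^{\beta}$, and a moment bound of the expected order yields $\p{\cdot > r_n^{\beta-\delta}} \le C_q\, r_n^{q\delta}$, uniformly in $t$ on compacts of $\mc D$. Taking $q$ with $q\delta > 1$ gives $\sum_n \e{\mu_\alpha(A_n)} < \infty$, hence $\sum_n \mu_\alpha(A_n) < \infty$ almost surely, so $\mu_\alpha$-a.e.\ $t$ lies in only finitely many $A_n$. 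The lower-bound failure set is handled the same way, now bounding $F_\gamma(t+r_n)-F_\gamma(t)$ below by the contribution of a sub-interval on which $B$ stays within $\sqrt{r_n}$ of $B_t$ and the harmonic projection of $h$ on $D(B_t,\sqrt{r_n})$ is not too negative --- \cref{lem:gff.scaling} supplies the lower bound on that projection around a thick point, replacing the exact scaling relation available for stochastically scale-invariant fields --- and then invoking the lower-tail estimates for $F_\gamma$ from \cref{subs:moments}.

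The hard part is exactly the issue noted after \cref{thm:lower.bound}: the tilt makes $B_t$ $\alpha$-thick, but the GFF is not exactly scale invariant, so one cannot rescale the field near $B_t$ and must instead run the moment computations with the decomposition of $h$ into a radially log-correlated coarse part (the circle-average process on $D(B_t,\rho)$, $\rho \downarrow 0$) and an essentially independent fine part, controlling the coarse part via \cref{lem:gff.scaling}. Compounding this, the long-range correlations carried by the path $(B_s)_{s \in [t,t+r_n]}$ --- a genuinely one-parameter family of correlated Gaussians of spatial diameter of order $\sqrt{r_n}$ --- are what make the moment bounds of \cref{subs:moments} delicate, and those bounds must be kept uniform in $t$ for the integration over $[0,T]$ above to go through. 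Once that machinery is in place, the reduction and Borel--Cantelli steps described here close the argument.
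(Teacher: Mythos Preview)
Your proposal is essentially the paper's own argument: the first-moment bound on $\e{\mu_\alpha(A_n)}$ via the Cameron--Martin tilt, the decomposition $h = h^{har} + \tilde h$ with \cref{lem:gff.scaling} controlling $h^{har}$, the moment/tail bounds from \cref{subs:moments}, and Borel--Cantelli along $r_n = 2^{-n}$ are exactly what the paper does in \cref{lem:size.bound}, \cref{lem:size.bound.2}, \cref{prop:lower}, \cref{prop:upper}, and \cref{lem:regularity.restated}. One minor imprecision: you write the tail bound as $C_q r_n^{q\delta}$ as if arbitrary moments were available, whereas only a small positive moment is (\cref{prop:positive}); this does not matter, since any positive exponent suffices for summability.

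There is, however, one genuine omission. Your sketch treats only $r > 0$: the sets $A_n$ and the whole tilt/scaling computation look forward from time $t$, using that $(B_{t+s}-B_t)_{s\ge 0}$ is an independent Brownian motion. For $r < 0$ the backward path $(B_{t-s}-B_t)_{s\in[0,t]}$ is a bridge, not a Brownian motion, so the moment machinery of \cref{subs:moments} does not apply verbatim. The paper closes this gap not by rerunning the estimates, but by a time-reversal argument (\cref{lem:abs.cont}): on any interval of length $<t$, the forward and backward processes from $B_t$ have mutually absolutely continuous laws (conditional on $\{\tau>t\}$), so the one-sided conclusion transfers to the other side for $\mu_\alpha$-a.e.\ $t$. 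You should either invoke this, or note explicitly that the backward version of the moment bounds follows from the forward one by this absolute continuity.
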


When we combine the regularity of the time change function $F_\gamma$ with known regularity properties of Brownian motion, we are able to find a bound on the small time behaviour of the LBM. Let us call $M_\alpha$ the Liouville measure constructed from a GFF with parameter $\alpha$, which is formally defined as
\begin{equation*}
	M_\alpha(dz) = e^{\alpha h(z)}dz.
\end{equation*}
The measure $M_\alpha$ is almost surely supported on the set of $\alpha$-thick points and so, if we choose a point in $\mc D$ according to $M_\alpha$, it will almost surely be an $\alpha$-thick point of the GFF. 

\begin{corollary}
	Suppose that the starting point of a $\gamma$-Liouville Brownian motion is chosen according to $M_\alpha$, i.e.~$Z^\gamma_0 \sim M_\alpha$. Then
	\begin{equation*}
		\limsup_{t \to 0}\frac{\log |Z^\gamma_t|}{\log t} = \frac{1}{2 - \alpha\gamma+ \frac{\gamma^2}{2}}
	\end{equation*}
	almost surely.
	\label{cor:regularity}
\end{corollary}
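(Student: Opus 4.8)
The plan is to split the statement into a regularity estimate for the clock $F_\gamma$ near the (random, $\alpha$-thick) starting point and the classical modulus of continuity of the underlying Brownian motion, and then to read off the exponent by a chain-rule argument. Throughout write $Z^\gamma_t = B_{F^{-1}_\gamma(t)}$ as in the definition, set $\beta := 1 - \frac{\alpha\gamma}{2} + \frac{\gamma^2}{4}$ (so the target exponent is $\frac{1}{2\beta}$, and $\beta>0$ in the non-degenerate regime of \cref{thm:regularity}), and read $|Z^\gamma_t|$ as the distance travelled by the path up to time $t$, i.e. $\sup_{s\le t}|Z^\gamma_s - Z^\gamma_0|$; this is the quantity for which the exponent is well defined (planar Brownian motion is neighbourhood-recurrent near $B_0$, so the pointwise displacement is not bounded below by any power of time). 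Since $M_\alpha$ is a.s. supported on $\mc T_\alpha$, under the law $Z^\gamma_0 \sim M_\alpha$ the Brownian motion $B$ a.s. starts from an $\alpha$-thick point of $h$, with respect to the associated rooted field.

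\emph{The clock.} The quantity $F_\gamma(r) = F_\gamma(0+r) - F_\gamma(0)$ is exactly the increment of $F_\gamma$ at time $0$, hence is governed by the same estimates that produce \cref{thm:regularity}: the H\"older-type bounds of \cref{subsec:holder} for the $\gamma$-clock of a Brownian motion issued from an $\alpha$-thick point. Granting that these apply at time $0$ under $Z^\gamma_0 \sim M_\alpha$, one obtains, almost surely,
\[
	\lim_{r \to 0} \frac{\log F_\gamma(r)}{\log r} = \beta, \qquad\text{equivalently}\qquad \lim_{t \to 0} \frac{\log F^{-1}_\gamma(t)}{\log t} = \frac{1}{\beta},
\]
the equivalence using that $F_\gamma$ is continuous and strictly increasing with $F_\gamma(0)=0$.

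\emph{The Brownian motion and the conclusion.} The upper law of the iterated logarithm gives $\sup_{u \le \sigma}|B_u - B_0| \le \sqrt{2\sigma\log\log(1/\sigma)}$ for all small $\sigma$, and Chung's lower law of the iterated logarithm gives $\sup_{u \le \sigma}|B_u - B_0| \ge c\sqrt{\sigma/\log\log(1/\sigma)}$ for all small $\sigma$, with $c>0$; hence almost surely $\lim_{\sigma \to 0}\log(\sup_{u\le\sigma}|B_u - B_0|)/\log\sigma = \tfrac12$. Since $\{Z^\gamma_s : s\le t\} = \{B_u : u \le F^{-1}_\gamma(t)\}$, writing $\sigma_t := F^{-1}_\gamma(t) \to 0$ as $t\to 0$ we obtain
\[
	\lim_{t \to 0}\frac{\log |Z^\gamma_t|}{\log t} = \lim_{t\to 0}\left(\frac{\log\big(\sup_{u\le\sigma_t}|B_u-B_0|\big)}{\log\sigma_t}\cdot\frac{\log\sigma_t}{\log t}\right) = \frac12\cdot\frac1\beta = \frac{1}{2-\alpha\gamma+\frac{\gamma^2}{2}},
\]
which is the assertion (indeed the $\limsup$ is an honest limit).

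\emph{Main obstacle.} The delicate point is the transfer used in the second step: \cref{thm:regularity} is stated for $\mu_\alpha$-almost every interior time $t$, and one must check that its proof — equivalently, the estimates of \cref{subsec:holder} — really controls the $\gamma$-clock of a Brownian motion started from a \emph{generic} $\alpha$-thick point under the rooted law of the field, and not merely increments at interior times. In particular one has to verify that the long-range correlations created by the Brownian excursions that return close to the thick starting point — where, for $\alpha\gamma>0$, $F_\gamma$ runs quickly — do not alter the exponent $\beta$. Once that is in hand, the law of the iterated logarithm and the chain-rule computation above are routine.
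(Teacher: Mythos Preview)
Your outline correctly isolates the two ingredients (clock regularity $+$ Brownian modulus) and arrives at the right chain-rule computation, and you explicitly flag the real difficulty. But you do not resolve it, and that is precisely where the paper's work lies.

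\medskip

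\textbf{The gap.} \cref{thm:regularity} is a statement about $\mu_\alpha$-almost every \emph{interior} time along a Brownian path from a \emph{fixed} deterministic point; it says nothing directly about the clock at $t=0$ when the starting point is sampled from $M_\alpha$. Your sentence ``granting that these apply at time $0$ under $Z^\gamma_0\sim M_\alpha$'' is exactly the step that needs proof, and the estimates of \cref{subsec:holder} do not immediately give it: those bounds are obtained by integrating against $\mu_\alpha(dt)$, not by conditioning on the starting point being $M_\alpha$-typical. The paper does \emph{not} try to push the H\"older-like estimates to $t=0$. Instead it argues indirectly: take an exponential time $T$ independent of $(h,B)$, so that $F_\alpha^{-1}(T)$ is $\mu_\alpha$-typical and \cref{thm:regularity} applies there; set $T'=F_\gamma(F_\alpha^{-1}(T))$, check that $T'$ is a stopping time for the enlarged filtration $\mc G_t=\sigma(Z^\gamma_s:s<t)\vee\mc H$, and use the strong Markov property of $Z^\gamma$ to transfer the exponent from time $T'$ to time $0$ for the process started from the law of $Z^\gamma_{T'}=Z^\alpha_T$. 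The final link --- that this law is mutually absolutely continuous with $M_\alpha$ --- comes from the Liouville heat kernel results of Garban--Rhodes--Vargas (strict positivity of $\int_0^\infty e^{-t}p^\alpha_t(0,y)\,dt$ for $M_\alpha$-a.e.\ $y$). None of this machinery appears in your proposal, and without it the transfer you ``grant'' is unproved.

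\medskip

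\textbf{A secondary point.} You reinterpret $|Z^\gamma_t|$ as the running maximum $\sup_{s\le t}|Z^\gamma_s-Z^\gamma_0|$, arguing that the pointwise displacement is ``not bounded below by any power of time''. The paper works with the pointwise displacement $|Z^\gamma_{F_\gamma(t)+u}-Z^\gamma_{F_\gamma(t)}|$ (see \cref{lem:lbm.scale}), and the statement is genuinely a $\limsup$, not a limit: the lower bound in \cref{lem:lbm.scale} uses only that for every $\varepsilon>0$ there \emph{exists} $s\in[-\varepsilon,\varepsilon]$ with $|B_{t+s}-B_t|\ge |s|^{1/2+\delta}$, which bounds the $\limsup$ from above but says nothing about the $\liminf$. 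Your running-maximum version with Chung's LIL is a legitimate alternative reading and would indeed upgrade the conclusion to a full limit, but it is not the paper's statement.
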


\begin{remark}
	When $\alpha = \gamma$ (which will be the typical case), the diffusivity exponent is $2 - \frac{\gamma^2}{2}$. Also observe that a single process can be both superdiffusive (e.g.~when $\alpha = 0$) and subdiffusive (e.g.~when $\alpha = \gamma$). 
\end{remark}

Finally, we will show the following result about the differentiability of a Liouville Brownian motion, for certain values of the parameter $\gamma$. 
\begin{corollary}
	Let $\gamma \in (\sqrt 2, 2)$. Then the $\gamma$-Liouville Brownian motion $Z^\gamma$ is Lebesgue-almost everywhere differentiable with derivative zero, almost surely. 
	\label{cor:differentiability}
\end{corollary}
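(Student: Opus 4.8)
The plan is to read the statement off from the regularity of the clock process, \cref{thm:regularity} applied with $\alpha = \gamma$, together with the H\"older continuity of planar Brownian motion; once these are available the only work is to estimate the difference quotients of $Z^\gamma$ directly. Work on the almost sure event on which (i) the conclusion of \cref{thm:regularity} holds, (ii) $B$ is locally $\eta$-H\"older for every $\eta < 1/2$, and (iii) $F_\gamma$ is a continuous, strictly increasing bijection of $[0,\tau]$ onto $[0,\zeta]$, where $\tau$ is the exit time of $B$ from $\mc D$ and $\zeta = F_\gamma(\tau)$; property (iii), recalled in \cref{subs:lbm}, uses that $\int_s^t e^{\gamma h(B_v) - \frac{\gamma^2}{2}\mb E[h(B_v)^2]}\,dv > 0$ for all $s < t$. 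Given $t \in (0,\zeta)$, set $u = F_\gamma^{-1}(t)$ and, for small $r$, $\delta = F_\gamma^{-1}(t+r) - u$; then $\delta \rightarrow 0$ as $r \rightarrow 0$ by continuity of $F_\gamma^{-1}$, and $r = F_\gamma(u+\delta) - F_\gamma(u)$. Since $Z^\gamma_{t+r} - Z^\gamma_t = B_{u+\delta} - B_u$,
\begin{equation*}
	\left| \frac{Z^\gamma_{t+r} - Z^\gamma_t}{r} \right| \leq \frac{|B_{u+\delta} - B_u|}{|F_\gamma(u+\delta) - F_\gamma(u)|}.
\end{equation*}

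I would then bound the two factors. By (ii), $|B_{u+\delta} - B_u| \leq C|\delta|^{\frac{1}{2} - \epsilon}$ for all small $|\delta|$. By \cref{thm:regularity} with $\alpha = \gamma$, for $\mu_\gamma$-almost every Brownian time $u$,
\begin{equation*}
	\lim_{\delta \rightarrow 0} \frac{\log|F_\gamma(u+\delta) - F_\gamma(u)|}{\log|\delta|} = 1 - \frac{\gamma^2}{2} + \frac{\gamma^2}{4} = 1 - \frac{\gamma^2}{4},
\end{equation*}
so that $|F_\gamma(u+\delta) - F_\gamma(u)| \geq |\delta|^{1 - \gamma^2/4 + \epsilon}$ for all small $|\delta|$. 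Combining, for $\mu_\gamma$-almost every $u$ the difference quotient is at most $C|\delta|^{\gamma^2/4 - 1/2 - 2\epsilon}$, and since $\gamma \in (\sqrt 2, 2)$ gives $\gamma^2/4 - 1/2 > 0$, taking $\epsilon$ small enough makes this exponent positive; hence the quotient tends to $0$ as $r \rightarrow 0$. Therefore $Z^\gamma$ is differentiable at $t$ with derivative zero whenever $u = F_\gamma^{-1}(t)$ lies in the relevant full-$\mu_\gamma$-measure set $G$.

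It remains to transfer the statement from $\mu_\gamma$-almost every Brownian time to Lebesgue-almost every $t$. By the definition of $\mu_\gamma$, $\mu_\gamma([s,t]) = F_\gamma(t) - F_\gamma(s) = \mathrm{Leb}(F_\gamma([s,t]))$, so $F_\gamma$ pushes $\mu_\gamma$ forward to Lebesgue measure on $[0,\zeta]$; since $F_\gamma$ is a bijection, $\mathrm{Leb}([0,\zeta]\setminus F_\gamma(G)) = \mathrm{Leb}(F_\gamma([0,\tau]\setminus G)) = \mu_\gamma([0,\tau]\setminus G) = 0$, and, the two endpoints being Lebesgue-null, this gives differentiability with derivative zero at Lebesgue-almost every $t$, as claimed. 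The main obstacle has already been dealt with in \cref{thm:regularity}; the remaining steps are routine, the only mild points of care being the local H\"older bound for $B$ and the fact that $F_\gamma^{-1}$ is a genuine continuous inverse, which is why I isolated (ii) and (iii) at the outset.
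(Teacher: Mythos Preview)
Your argument is correct and follows essentially the same route as the paper: apply \cref{thm:regularity} with $\alpha=\gamma$ to get the exponent $1-\gamma^2/4$ for $\mu_\gamma$-a.e.\ Brownian time, combine with the H\"older continuity of $B$ to bound the difference quotient, and then push forward from $\mu_\gamma$ to Lebesgue via $F_\gamma$. The only cosmetic difference is that the paper packages the ``H\"older for $B$ plus regularity of $F_\gamma$'' step into a separate lemma (its \cref{lem:lbm.scale}, giving $|Z^\gamma_{F_\gamma(t)}-Z^\gamma_{F_\gamma(t)+r}|\le |r|^{1/(2-\gamma^2/2)-\delta}$), whereas you carry out that combination directly in terms of the Brownian increment $\delta$; both computations yield an exponent strictly bigger than $1$ exactly when $\gamma>\sqrt 2$.
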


\subsection{Intuition behind the proof}\label{subs:intuition}

Since we are looking at the dimension of times that a $\gamma$-LBM spent in $\alpha$-thick points, it helps us to first note the following lemma. It is not used in the proofs of the main theorems, but it provides motivation for them. 

\begin{lemma}
	Let $\alpha \in [0,2)$.%]
	The Hausdorff dimension of time that a Brownian motion $B$ spends in the $\alpha$-thick points of a GFF is given by
	\begin{equation*}
		\dim_H\left( \left\{ t \; : \; B_t \in \mc T_\alpha \right\} \right) = 1 - \frac{\alpha^2}{4}
	\end{equation*}
	almost surely. 
	\label{lem:kaufmann}
\end{lemma}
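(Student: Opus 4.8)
The plan is to compute the Hausdorff dimension of the random time set $S_\alpha := \{t : B_t \in \mc T_\alpha\}$ by combining a ``first moment'' upper bound with a Frostman-type lower bound, exploiting the fact that $B$ is independent of the GFF $h$. The starting observation is a Kaufman-type dimension doubling principle: since planar Brownian motion maps a (deterministic) set $A$ of Hausdorff dimension $d$ onto a set $B(A)$ of dimension $\min(2, 2d)$, and more relevantly the preimage $\{t : B_t \in E\}$ of a set $E \subset \mc D$ has dimension $\min(1, \tfrac12 \dim_H E + \tfrac12 \cdot 1)$-type behaviour (planar BM hits a set of dimension $\beta$ on a time set of dimension $\beta/2$ when $\beta < 2$), one expects heuristically $\dim_H S_\alpha = \tfrac12 \dim_H \mc T_\alpha = \tfrac12(2 - \tfrac{\alpha^2}{2}) = 1 - \tfrac{\alpha^2}{4}$. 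The bulk of the work is making this rigorous, since $\mc T_\alpha$ is itself random and correlated with nothing the BM sees, but has a known a.s.\ dimension $2 - \tfrac{\alpha^2}{2}$ by \cite{hu2010thick}.

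For the \emph{upper bound}, I would work conditionally on $h$ (hence on $\mc T_\alpha$). Fix a large ball $D' \Subset \mc D$ and a finite time horizon $T$. Cover $[0,T]$ by dyadic intervals of length $2^{-n}$; an interval $I$ of length $2^{-n}$ around time $t_0$ can contribute to $S_\alpha$ only if $B$ restricted to $I$ meets $\mc T_\alpha$, and $B(I)$ is contained (with overwhelming probability) in a disc of radius $\sim 2^{-n/2}\sqrt{n}$ around $B_{t_0}$. So the expected number of such intervals is controlled by the expected number of radius-$2^{-n/2}$ discs needed to cover $\mc T_\alpha \cap D'$, times a union bound over $\sim 2^n$ time intervals. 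Using the dimension bound for $\mc T_\alpha$ (for any $\varepsilon>0$ the covering number of $\mc T_\alpha$ at scale $\delta$ is $\le \delta^{-(2-\alpha^2/2)-\varepsilon}$ eventually), the scale $\delta = 2^{-n/2}$ gives covering number $\le 2^{n(1-\alpha^2/4)+\varepsilon n}$; multiplying by the $2^n$ time-localisation and dividing back by the overlap (each spatial disc is visited by BM on a time set that is itself covered by $\lesssim$ one interval near $t_0$) yields an $s$-dimensional Hausdorff content bound that vanishes for $s > 1 - \tfrac{\alpha^2}{4}$. (This is exactly the Kaufman/McKean counting argument; the only new ingredient is feeding in the thick-point covering exponent instead of a deterministic one.)

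For the \emph{lower bound}, I would use the energy method. Still conditionally on $h$, $\mc T_\alpha$ carries a Frostman measure: for dimension reasons there exists (a.s., for every $\varepsilon>0$) a nonzero measure $\nu$ supported on $\mc T_\alpha$ with $\nu(B(z,r)) \le r^{(2-\alpha^2/2)-\varepsilon}$ --- one can take $\nu = M_{\alpha'}$, the Liouville measure of a slightly larger parameter $\alpha'$, or invoke the mass distribution principle directly from \cite{hu2010thick}. Now push $\nu$ through the Brownian occupation: define a random measure $\mu$ on $[0,T]$ by $\mu(A) = \int_{\mc D} \ell_z(A)\, \nu(dz)$, where $\ell_z$ is the (nonexistent in $d=2$!) local time --- since planar BM has no local times, I would instead define $\mu$ via its Fourier/occupation structure, i.e.\ test $\mu$ against the inverse image: set $\mu(A) = \lim_{r\to 0} \tfrac{1}{\pi r^2}\int_A \nu(B(B_s, r))\,ds$, or more robustly work with the expected occupation kernel. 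The goal is to show $\e{\iint |s-t|^{-s}\, \mu(ds)\mu(dt)} < \infty$ for $s < 1 - \tfrac{\alpha^2}{4}$. Taking expectation over $B$ first, $\e{|s-t|^{-\beta}\, \mathbf 1_{B_s \approx z, B_t \approx w}}$ involves the transition density $p_{|s-t|}(z,w) \sim |s-t|^{-1} e^{-|z-w|^2/2|s-t|}$; integrating $\int_0^T\int_0^T |s-t|^{-\beta} p_{|s-t|}(z,w)\, ds\, dt$ against $\nu(dz)\nu(dw)$ reduces, after the time integral, to a spatial energy $\iint |z-w|^{-(2\beta - 2 + 2)}\cdots \nu(dz)\nu(dw)$ (the $\log$-singularity of the 2D Green's function), which is finite precisely when $2\beta < 2 - \alpha^2/2$, i.e.\ $\beta < 1 - \alpha^2/4$. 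Then Frostman's lemma gives $\dim_H(\mathrm{supp}\,\mu) \ge 1 - \alpha^2/4$, and $\mathrm{supp}\,\mu \subset \overline{S_\alpha}$; a routine argument (the measure does not charge $\partial S_\alpha \setminus S_\alpha$, or pass to a Borel subset) upgrades this to $\dim_H S_\alpha \ge 1 - \alpha^2/4$.

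The main obstacle, as flagged, is the absence of local times for two-dimensional Brownian motion: the natural ``occupation measure'' $\mu$ on time pulled back from a Frostman measure $\nu$ on $\mc T_\alpha$ must be constructed by a limiting/regularisation procedure (thickening $\mc T_\alpha$ to an $r$-neighbourhood, normalising by $\pi r^2$, and showing the resulting measures are tight with a non-degenerate limit), and one must verify the energy estimate survives the limit --- this is where the computation genuinely uses that $\nu$ has dimension $> 0$ and gives room below the critical exponent. A secondary subtlety is the interchange of ``conditionally on $h$'' statements with the a.s.\ statement in the lemma: one proves the bound for a.e.\ realisation of $h$ and then notes the event is measurable in the joint probability space; the thick-point set being empty for $\alpha = 2$ (where the bound reads $\dim_H = 0$, matching $\mc T_2$ having dimension $0$) is the only boundary case and is handled by the same counting argument giving an upper bound of $0$.
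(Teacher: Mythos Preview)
Your approach is \emph{much} more laborious than the paper's, which is a three-line citation argument: Kaufman's uniform dimension-doubling theorem (Theorem~9.28 in M\"orters--Peres) gives
\[
\dim_H\bigl(\{t : B_t \in \mc T_\alpha\}\bigr) = \tfrac12 \dim_H\bigl(\mc T_\alpha \cap [B]\bigr)
\]
almost surely; then a result of Hawkes (stated as Corollary~5.2 in Peres, \emph{Intersection-equivalence of Brownian paths and certain branching processes}) gives $\dim_H(\mc T_\alpha \cap [B]) = \dim_H(\mc T_\alpha)$ almost surely, using only the independence of $B$ and $\mc T_\alpha$; finally $\dim_H(\mc T_\alpha) = 2 - \alpha^2/2$ is the Hu--Miller--Peres result. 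That's the whole proof. The crucial point you are implicitly reproving is that Kaufman's theorem holds \emph{uniformly over all subsets $A$ of the time axis simultaneously}, so it can be applied to the $h$-measurable random set $\{t : B_t \in \mc T_\alpha\}$ after conditioning on $h$; your conditional setup is correct but you never isolate this uniformity, which is where the real content of Kaufman lies.

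Your upper bound sketch is the right idea but the counting is garbled (you multiply the spatial covering number by $2^n$ and then ``divide back by the overlap'' without saying what that means); the clean version is simply to cover $\mc T_\alpha$ at spatial scale $\delta$ and pull back each ball under $B$, using the Kaufman-type estimate on the time set where $B$ sits in a fixed small ball. Your lower bound has a genuine gap that you yourself flag: in $d=2$ there is no local time, so the pushforward measure $\mu$ on the time axis is not defined, and the regularisation $\mu_r(A) = (\pi r^2)^{-1}\int_A \nu(B(B_s,r))\,ds$ need not converge to anything supported on $S_\alpha$. The energy computation you wrote down is correct as a formal calculation (and does give the right threshold $\beta < 1 - \alpha^2/4$), and one can in principle salvage it by showing the $\mu_r$ are Cauchy in an appropriate negative-Sobolev norm, but this is exactly the content of the Hawkes/Peres intersection machinery --- so you would end up reproving the cited result rather than bypassing it. In short: your route can be made to work, but it reinvents two nontrivial theorems that the paper simply quotes.
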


\begin{proof}
	Let $[B]$ denote the path of the Brownian motion $B$. Kauffman's dimension doubling formula for Brownian motion (see, for example, Theorem 9.28 of \cite{morters2010brownian}), tells us that
	\begin{equation*}
		\dim_H\left( \left\{ t \; : \; B_t \in \mc T_\alpha \right\} \right) = 2 \dim_H(\mc T_\alpha \cap [B])
	\end{equation*}
	almost surely. But then, since $B$ is independent of the GFF and hence $\mc T_\alpha$, by a theorem due to Hawkes \cite{hawkes1971hausdorff,hawkes1971some} (clearly stated and proved as Corollary 5.2 in \cite{peres1996intersection}), we know that
	\begin{equation*}
		\dim_H(\mc T_\alpha \cap [B]) = \dim_H(\mc T_\alpha)
	\end{equation*}
	almost surely. The result in \cite{hu2010thick} gives us that
	\begin{equation*}
		\dim_H(\mc T_\alpha) = 2 - \frac{\alpha^2}{2}
	\end{equation*}
	almost surely, which completes our proof. 
\end{proof}

Recall the result that, if a function $f$ is $\beta$-H\"older continuous, then for any suitable set $E$ we have the bound
\begin{equation}
	\dim_H(f(E)) \leq \frac{1}{\beta} \dim_H(E).
	\label{eq:holder}
\end{equation}

Let us call the set $T_\alpha = \left\{ t \: : \; B_t \in \mc T_\alpha \right\}$. Then notice that $F_\gamma(T_\alpha)$ is the set of time spent by $\gamma$-Liouville Brownian motion in $\alpha$-thick points. We will show that the inverse of the change of time, $F_\gamma^{-1}$ is $\frac{1}{1 - \frac{\alpha\gamma}{2} + \frac{\gamma^2}{4}}$-H\"older continuous around $\alpha$ thick points, allowing us to see that
\begin{equation*}
	1 - \frac{\alpha^2}{4} =  \dim_H(T_\alpha) = \dim_H(F_\gamma^{-1}(F_\gamma(T_\alpha))) \leq \left( 1 - \frac{\alpha\gamma}{2} + \frac{\gamma^2}{4} \right) \dim_H(F_\gamma(T_\alpha)),
\end{equation*}
where the final inequality comes from a result very similar to that in \cref{eq:holder}. (We cannot use that result exactly, since the H\"older continuity property of $F_\gamma^{-1}$ is restricted to a subset of its domain. We will discuss this further in \cref{subs:hd}.)

Rather than showing the regularity of $F_\gamma^{-1}$ directly, we will find properties of $F_\gamma$ and use them to deduce results about the inverse. However, showing the regularity properties of $F_\gamma$ around a single thick point, while useful, is not enough. We want to look at the regularity of $F_\gamma$ simultaneously around all $\alpha$-thick points that the Brownian motion $B$ visits. This is where we use the upper bound that was previously found in \cite{berestycki2013diffusion}. The upper bound is enough to show that a $\gamma$-LBM, $Z^\gamma$, spends Lebesgue-almost all of its time in $\gamma$-thick points, almost surely. Our trick, therefore, is to construct two Liouville Brownian motion processes simultaneously on the same underlying path $B$; one will use the parameter $\gamma$, the other will use the parameter $\alpha$. Then, if we sample a time uniformly at random and look at where the process $Z^\alpha$ is, it will almost surely be an $\alpha$-thick point. Since the process is constructed using the Brownian motion $B$, we know that $B$ must pass through that particular $\alpha$-thick point at some time $t$, say. But then we know that $F_\gamma(t)$ corresponds to a time that $Z^\gamma$ is in an $\alpha$-thick point.

Using this procedure, we can construct a measure on the (Euclidean) set of times that $Z^\gamma$ spends in $\alpha$-thick points. We can then sample a time at random from this measure, and look at the regularity properties of $F_\gamma$ around that time. This idea is more thoroughly fleshed out in \cref{subsec:holder}.

\section{Setup}\label{section:setup}

We will now collect a few of the definitions and results that we use throughout \cref{sec:proofs}. Throughout, we let $\mc D$ be a simply connected, proper domain in $\mb C$. By conformal invariance of Liouville Brownian motion (including its clock process) we can assume without loss of generality that $\mc D$ is bounded. (See Theorem 1.3 in \cite{berestycki2013diffusion}.)

\subsection{Gaussian Free Field}\label{subs:gff}
We will briefly introduce the Gaussian Free Field here, mostly to clarify our notation. For more detail see, for example, \cite{sheffield2007gaussian} or the introduction of \cite{duplantier2011liouville}. 

Before we can define the GFF we need to define the Dirichlet inner product.  For any two smooth, compactly supported functions $\phi$ and $\psi$ defined on $\mc D$, we define the Dirichlet inner product as
\begin{equation*}
	\dipd{\phi}{\psi} = \frac{1}{2\pi} \int_{\mc D} \grad \phi(z) \cdot \grad \psi(z) dz.
\end{equation*}
We can now define the Gaussian Free Field. 

\begin{definition}
	Let $H^1_0(\mc D)$ be the Sobolev space given by the completion under the Dirichlet inner product of smooth, compactly supported functions defined on $\mc D$. The Gaussian Free Field is a centered Gaussian process on the space $H^1_0(\mc D)$. 
\end{definition}

A consequence of the Hilbert space definition given above is that for any two functions $f, g \in H^1_0(\mc D)$, the random variables $\dipd{h}{f}$ and $\dipd{h}{g}$ are centred Gaussian random variables with covariance
\begin{equation*}
	\cov{\left(\dipd{h}{f},\dipd{h}{g}\right)} = \dipd{f}{g}.
\end{equation*}
This means that we can define a regularisation of the GFF, and we know about its covariance properties.  

\begin{definition}
	The average of the GFF $h$ on a circle of radius $\varepsilon$, centred at a point $z \in \left\{ z' \in \mc D \, : \, \text{dist}( z', \partial \mc D) > \varepsilon \right\}$ is defined as as
	\begin{equation*}
		h_\varepsilon(z) = \dipd{h}{\xi^z_\varepsilon}.
	\end{equation*}
	The function $\xi^z_\varepsilon$ is given by
	\begin{equation}
		\xi^z_\varepsilon(y) = -\log (|z-y|\vee \varepsilon) + \overline{\phi^z_\varepsilon}(y),
		\label{eq:xi}
	\end{equation}
	where $\overline{\phi^z_\varepsilon}$ is harmonic in $\mc D$ and is equal to $\log (|z-y| \vee \varepsilon)$ for $y \in \partial \mc D$. 
\end{definition}

The reason that we think of the above definition as giving the circle average of the GFF is that, as a distribution, we have that $-\Delta \xi^z_\varepsilon = 2\pi\nu^z_\varepsilon$, where $\nu^z_\varepsilon$ is the uniform distribution on the circle centred at $z$ with radius $\varepsilon$. Therefore, integration by parts gives us
\begin{equation*}
	\dipd{h}{\xi^z_\varepsilon} = \ipd{h}{\nu^z_\varepsilon},
\end{equation*}
where $\ipd{\cdot}{\cdot}$ refers to the standard $L^2$ inner product. We will use a continuous modification of the circle average process $\left\{ h_\varepsilon(z) \; ; \; \varepsilon > 0 \right\}_{z \in \mc D}$ throughout. For more detail, see Propositions 3.1 and 3.2 in \cite{duplantier2011liouville}.

The following lemma will be useful in \cref{subsec:scaling}, as it allows us to use properties of the $\log$ function rather than relying on the abstract definition of $\e{h_\varepsilon(x)h_\eta(y)}$. 

\begin{lemma}
	Let $h$ be a zero boundary GFF defined on a simply connected domain $\mc D$. For any subdomain $\tilde{ \mc D}$ which is compactly contained in $\mc D$, there exists a constant $C > 0$ such that, for all $0< \varepsilon, \eta  \leq \text{dist}(\tilde{\mc D}, \partial \mc D)$ with $\eta \leq \varepsilon$,
	\begin{equation*}
		\log \frac{1}{|x-y| + \varepsilon} - C \leq \e{h_\varepsilon(x) h_\eta(y)} \leq \log \frac{1}{|x-y| + \varepsilon} + C
	\end{equation*}
	for all $x, y \in \tilde{ \mc D}$. 
	\label{lem:cov.bound}
\end{lemma}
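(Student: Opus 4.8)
The plan is to reduce the covariance of the two circle averages to an explicit potential-theoretic integral and then estimate that integral. By the Hilbert-space definition of the GFF and the definition $h_\varepsilon(x)=\dipd{h}{\xi^x_\varepsilon}$, we have $\e{h_\varepsilon(x)h_\eta(y)} = \dipd{\xi^x_\varepsilon}{\xi^y_\eta}$. Since $\xi^x_\varepsilon,\xi^y_\eta$ vanish on $\partial\mc D$ and $-\lap\xi^x_\varepsilon = 2\pi\nu^x_\varepsilon$ as distributions, the same integration by parts used in \cref{subs:gff} gives
\begin{equation*}
	\e{h_\varepsilon(x)h_\eta(y)} = \ipd{\nu^x_\varepsilon}{\xi^y_\eta} = \int \xi^y_\eta\, d\nu^x_\varepsilon .
\end{equation*}
Here I use the hypothesis $\eta\le\varepsilon$ to average over the larger circle $\partial B(x,\varepsilon)$, which lies in $\mc D$ once $\varepsilon\le\text{dist}(\tilde{\mc D},\partial\mc D)$ (the edge case of a circle tangent to $\partial\mc D$ is harmless, or one passes to a marginally smaller compact subdomain). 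Plugging in the formula \eqref{eq:xi} for $\xi^y_\eta$ splits the right-hand side into a singular part $-\int\log(|y-z|\vee\eta)\,\nu^x_\varepsilon(dz)$ and a harmonic part $\int\overline{\phi^y_\eta}\,d\nu^x_\varepsilon$.

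The harmonic part is controlled by the maximum principle. For $y\in\tilde{\mc D}$ and $\eta\le\text{dist}(\tilde{\mc D},\partial\mc D)\le|y-w|$ for every $w\in\partial\mc D$, the boundary data $\log(|y-w|\vee\eta)=\log|y-w|$ is bounded, above and below, by constants depending only on $\text{dist}(\tilde{\mc D},\partial\mc D)$ and $\text{diam}(\mc D)$ (in particular $\overline{\phi^y_\eta}$ does not actually depend on $\eta$ in this range); hence $\overline{\phi^y_\eta}$ is uniformly bounded on $\mc D$, and so is its circle average. For the singular part I would use the classical mean-value identity $\int\log|y-z|\,\nu^x_\varepsilon(dz) = \log(|x-y|\vee\varepsilon)$, which produces the leading term $-\log(|x-y|\vee\varepsilon)$; since $|x-y|\vee\varepsilon$ and $|x-y|+\varepsilon$ differ by a factor in $[1,2]$, this already matches $\log\frac{1}{|x-y|+\varepsilon}$ up to an additive $\log 2$. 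What remains is to show that the truncation error
\begin{equation*}
	0 \le \int \log^+\frac{\eta}{|y-z|}\,\nu^x_\varepsilon(dz) = \log(|x-y|\vee\varepsilon) - \int\log(|y-z|\vee\eta)\,\nu^x_\varepsilon(dz)
\end{equation*}
is bounded uniformly in $x,y,\varepsilon,\eta$.

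This last estimate is the one genuinely delicate point; everything else is an unwinding of the definitions plus standard potential theory. The integrand is supported on the arc of $\partial B(x,\varepsilon)$ within distance $\eta$ of $y$. Writing $z=x+\varepsilon e^{i\theta}$ and $y-x=\rho e^{i\psi}$ gives $|y-z|^2=(\rho-\varepsilon)^2+4\rho\varepsilon\sin^2\frac{\theta-\psi}{2}$, and a short case analysis — according to whether $\rho$ is comparable to $\varepsilon$ or much smaller than $\varepsilon$, using $\eta\le\varepsilon$ — yields a lower bound of the form $|y-z|\ge c\,\varepsilon\,|\theta-\psi|$ on the relevant arc (or $|y-z|\ge\eta/2$ outright when $\rho$ is small). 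Consequently the truncation error is at most $\frac{1}{2\pi}\int\log^+\frac{C\eta}{\varepsilon|\theta-\psi|}\,d\theta = O(\eta/\varepsilon)=O(1)$, with absolute constants. Assembling the leading logarithmic term, the bounded harmonic part, the bounded truncation error, and the $\log 2$ from replacing $\vee$ by $+$, one obtains the asserted two-sided bound with a constant $C$ depending only on $\tilde{\mc D}$ and $\mc D$. The main obstacle is thus purely the elementary-but-fiddly bookkeeping in that arc integral.
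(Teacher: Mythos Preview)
Your argument is correct, but you have made the ``wrong'' choice of which way to integrate by parts, and this is what forces the fiddly arc integral. The paper instead writes $\e{h_\varepsilon(x)h_\eta(y)} = \ipd{\xi^x_\varepsilon}{\nu^y_\eta}$, i.e.\ it averages the function truncated at the \emph{larger} scale $\varepsilon$ over the \emph{smaller} circle $\partial B(y,\eta)$. For any $u$ with $|u-y|\le\eta\le\varepsilon$ one then has the pointwise two-sided bound $\tfrac{1}{3}(|x-y|+\varepsilon)\le |x-u|\vee\varepsilon\le |x-y|+\varepsilon$ by the triangle inequality alone, so the leading term comes out with no integration whatsoever and no truncation error to control. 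Your route --- averaging the $\eta$-truncated function over the $\varepsilon$-circle --- trades this one-line pointwise estimate for the arc integral $\int\log^+\frac{\eta}{|y-z|}\,\nu^x_\varepsilon(dz)$, which you then have to bound by hand. (Incidentally, the sign in your displayed identity for that truncation error is reversed; one has $\int\log(|y-z|\vee\eta)\,d\nu^x_\varepsilon = \log(|x-y|\vee\varepsilon) + \int\log^+\frac{\eta}{|y-z|}\,d\nu^x_\varepsilon$.) The harmonic-part estimate and the replacement of $\vee$ by $+$ are handled the same way in both arguments.
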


\begin{proof}
	First note that, by definition, $\mb E [h_\varepsilon(x)h_\eta(y)] = \dipd{\xi^x_\varepsilon}{\xi^y_\eta}$. Integration by parts lets us write that as $\dipd{\xi^x_\varepsilon}{\xi^y_\eta} = \ipd{\xi^x_\varepsilon}{\nu^y_\eta}$.
	Since $x,y \in \tilde{\mc D}$ are uniformly bounded away from $\partial \mc D$, $\mc D$ is a bounded domain, and $\overline{\phi^x_\varepsilon}$ (defined in \cref{eq:xi}) is harmonic in $\tilde{\mc D}$, we know that there exists a constant $\overline{C}$ such that
	\begin{equation}
		-\overline{C} \leq \overline{\phi^x_\varepsilon} (y) \leq \overline{C}
		\label{eq:phi.bound}
	\end{equation}
	for all $x,y \in \tilde {\mc D}$ and $\varepsilon > 0$. So, to complete the proof, it is sufficient to find bounds on $\ipd{-\log(|x-\cdot|\vee\varepsilon)}{\nu^y_\eta}$. To that end, we claim that, for all $x,y \in \mc D$ and $u \in \mc D$ such that $|u - y| \leq \eta$, we have
	\begin{equation}
		\frac{1}{3}\left( |x - y| + \varepsilon \right) \leq |x - u| \vee \varepsilon \leq |x - y| + \varepsilon.
		\label{eq:tri.bound}
	\end{equation}
	The right hand inequality follows directly from the triangle inequality. For the left hand inequality, note that 
	\begin{equation*}
		\frac{1}{3}\left( |x - y| + \varepsilon \right) \leq \frac{1}{3}|x - u| + \frac{2}{3}\varepsilon
	\end{equation*}
	by the triangle inequality. Then, if $|x - u| \leq \varepsilon$, we see that
	\begin{equation*}
		\frac{1}{3}|x - u| + \frac{2}{3}\varepsilon \leq \varepsilon = |x - u| \vee \varepsilon,
	\end{equation*}
	and if $|x - u| > \varepsilon$, we see that
	\begin{equation*}
		\frac{1}{3}|x - u| + \frac{2}{3}\varepsilon \leq |x - u| = |x - u| \vee \varepsilon.
	\end{equation*}

	Now, the inequalities in \cref{eq:tri.bound} imply that, for all $x,y \in \tilde{\mc D}$ and $u \in \partial B(y,\eta)$, there exists some constant $\tilde C$ such that
	\begin{equation*}
	-\log(|x-y| + \varepsilon) - \tilde C \leq	-\log\left( |x - u| \vee \varepsilon \right) \leq -\log(|x - y| + \varepsilon) + \tilde C.
	\end{equation*}
	When we average over $u \in \partial B(y, \eta)$ therefore, we find that
	\begin{equation*}
		-\log(|x-y| + \varepsilon) - \tilde C \leq	\ipd{-\log(|x - \cdot|\vee \varepsilon)}{\nu^z_\eta} \leq -\log(|x - y| + \varepsilon) + \tilde C,
	\end{equation*}
	which, when we combine it with \cref{eq:phi.bound}, completes the proof.
\end{proof}

One of the properties of the Gaussian Free Field which we will use is the domain Markov property. It roughly states that, given a subdomain $\mc U \subset \mc D$, the GFF $h$ on $\mc D$ can be decomposed as the sum of a zero boundary GFF $\tilde h$ on $\mc U$ and the difference, $h^{har} = h - \tilde h$, which is independent of $\tilde h$ and harmonic on $\mc U$. 

\begin{proposition}[Markov property]\label{prop:markov}
	Let $\mc U \subset \mc D$ be a subdomain of the simply connected domain $\mc D$. Let $h$ be a GFF on $\mc D$. Then we can write $h = h^{har} + \tilde h$, where
  \begin{enumerate}
		\item $h^{har}$ and $\tilde h$ are independent,
    \item $\tilde h$ is a zero boundary GFF on $\mc U$ and zero on $\mc D \setminus \mc U$, 
		\item $h^{har}$ is harmonic on $\mc U$ and agrees with $h$ on $\mc D \setminus \mc U$. 
  \end{enumerate}
\end{proposition}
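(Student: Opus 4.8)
The plan is to reduce everything to an orthogonal decomposition of the Sobolev space $H^1_0(\mc D)$. First I would establish
\[
	H^1_0(\mc D) = H^1_0(\mc U) \oplus \mathcal H,
\]
an orthogonal (with respect to $\dipd{\cdot}{\cdot}$) direct sum, where $H^1_0(\mc U)$ is viewed inside $H^1_0(\mc D)$ as the $\dipd{\cdot}{\cdot}$-closure of $C^\infty_c(\mc U)$ (heuristically, the $H^1_0$-functions vanishing outside $\mc U$) and $\mathcal H$ is its orthogonal complement. The content of this step is the identification of $\mathcal H$: for $g \in H^1_0(\mc D)$ and $\phi \in C^\infty_c(\mc U)$, integration by parts gives $\dipd{g}{\phi} = -\tfrac{1}{2\pi}\int_{\mc U} g\,\lap\phi$, so $g \perp H^1_0(\mc U)$ is exactly the statement that $g$ is weakly harmonic on $\mc U$; by Weyl's lemma such a $g$ coincides a.e.\ on $\mc U$ with a genuine harmonic function. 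Thus $\mathcal H$ is precisely the space of $H^1_0(\mc D)$-functions that are harmonic on $\mc U$.

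Next I would realise the GFF through this splitting. Choosing orthonormal bases $(f_n)_n$ of $H^1_0(\mc U)$ and $(g_m)_m$ of $\mathcal H$, their union is an orthonormal basis of $H^1_0(\mc D)$, and with $(X_n)_n, (Y_m)_m$ independent families of i.i.d.\ standard Gaussians the series $h = \sum_n X_n f_n + \sum_m Y_m g_m$ defines the GFF (in the usual sense in which the defining series of a GFF converges). Put $\tilde h = \sum_n X_n f_n$ and $h^{har} = \sum_m Y_m g_m$. Independence of $\tilde h$ and $h^{har}$ is immediate, each being measurable with respect to a disjoint family of independent Gaussians. By construction $\tilde h$ is the centred Gaussian field indexed by $H^1_0(\mc U)$ with covariance $\dipd{\cdot}{\cdot}$, i.e.\ a zero boundary GFF on $\mc U$; and since each $f_n$ is supported in $\overline{\mc U}$, $\tilde h$ pairs to zero with any test function — in particular with any circle-average kernel $\nu^z_\varepsilon$ — supported in $\mc D \setminus \overline{\mc U}$, which is the sense in which $\tilde h$ vanishes off $\mc U$.

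Finally I would verify that $h^{har}$ is harmonic on $\mc U$. Each $g_m$ is harmonic there, so the partial sums are, and harmonicity survives the limit once one knows the series converges locally uniformly on $\mc U$, almost surely, to a random function; this is where the argument needs care, but it follows from the same circle-average and modulus-of-continuity estimates used to obtain a continuous modification of the circle average process (restricted to $\mc U$), together with the mean value property. Since $h = h^{har} + \tilde h$ and $\tilde h \equiv 0$ on $\mc D \setminus \mc U$, we get $h^{har} = h$ there, which gives the last property. I expect the only genuinely delicate points to be this regularity bookkeeping — upgrading ``orthogonal to $C^\infty_c(\mc U)$'' to ``classically harmonic on $\mc U$'' via Weyl's lemma, and upgrading $H^1_0$-convergence of the series for $h^{har}$ to local uniform convergence of a harmonic limit — while the remainder is routine manipulation of orthonormal bases. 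This is a classical fact; a more leisurely treatment can be found in \cite{sheffield2007gaussian}.
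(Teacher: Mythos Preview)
The paper does not actually prove this proposition; it is stated as a known structural fact about the GFF, with the surrounding text pointing the reader to \cite{sheffield2007gaussian} and \cite{duplantier2011liouville} for background. So there is no ``paper's own proof'' to compare against.

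Your argument is the standard one and is correct in outline: the orthogonal splitting $H^1_0(\mc D) = H^1_0(\mc U) \oplus \mathcal H$ with $\mathcal H$ identified (via integration by parts and Weyl's lemma) as the $H^1_0(\mc D)$-functions harmonic on $\mc U$, followed by the series realisation of $h$ along a basis adapted to this splitting, is exactly how the Markov property is usually established. Independence and the identification of $\tilde h$ as a zero-boundary GFF on $\mc U$ are immediate from this. The one point you rightly flag as delicate --- upgrading the distributional limit $h^{har}$ to an honest harmonic function on $\mc U$ --- is most cleanly handled not by arguing local uniform convergence of the random series directly, but by observing that for $z \in \mc U$ and all small $\varepsilon$ the circle average $h^{har}_\varepsilon(z)$ is independent of $\varepsilon$ (since each $g_m$ satisfies the mean value property), and then invoking the joint continuity of circle averages you already cite. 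That sidesteps any issue about the mode of convergence of $\sum_m Y_m g_m$.
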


\begin{note}
	We will often refer to $h^{har}$ in the decomposition above as ``the harmonic projection of $h$ onto $\mc U$.''
\end{note}

We now define the set of $\alpha$-thick points of the field $h$. We can think of these as a kind of ``level set'' of the field. We are interested in how much time the Liouville Brownian motion spends in these points, for $\alpha \in [0,2)$ in particular.%])

\begin{definition}
	The set of $\alpha$-thick points, $\mc T_\alpha$ is
	\begin{equation*}
		\mc T_\alpha = \left\{ z \in \mc D \, : \, \lim_{\varepsilon \to 0}\frac{h_\varepsilon(z)}{-\log \varepsilon} = \alpha \right\}.
	\end{equation*}
\end{definition}

\subsection{Scale Invariant Gaussian Field}\label{subs:log.field}
To help with calculations in \cref{subs:moments}, we introduce a centred Gaussian field $Y$ defined on the whole complex plane, following the presentation of \cite{rhodes2013gaussian}. We use this particular log-correlated field because it has the exact stochastic scale invariance property. A great deal more information about log-correlated Gaussian fields and the measures created from them (those of Gaussian multiplicative chaos) can be found in \cite{kahane1985chaos,robert2010gaussian} for example, and more about the scaling relations which log-normal random measures satisfy can be found in \cite{allez2013lognormal}.

Informally, we define field $Y$ to be a centered Gaussian field on $\mb C$ with covariance function
	\begin{equation*}
		\e{Y(x) Y(y)} = \log_+ \frac{T}{|x-y|} + C.
	\end{equation*}
	for positive constants $T$ and $C$. For simplicity, we will take $T = 1$ and $C = 0$ throughout. We give the precise definition using the white noise decomposition of the field:

\begin{definition}
	Let $(Y_\varepsilon)_{\varepsilon \in (0,1]}$ be the white noise decomposition of the field $Y$, which has correlation structure 
	\begin{equation*}
		\mb E\left[ Y_\varepsilon(x) Y_\varepsilon(y) \right] = 
		\begin{cases}
			0 & \text{if } |x-y| > 1 \\
			\log \frac{1}{|x-y|} & \text{if } \varepsilon \leq |x-y| \leq 1 \\
			\log \frac{1}{\varepsilon} + 2\left( 1 - \frac{|x-y|^{\frac{1}{2}}}{\varepsilon^{\frac{1}{2}}} \right) & \text{if } |x-y| \leq \varepsilon.
		\end{cases}
	\end{equation*}
\end{definition}

The following lemma is useful in the study of properties Gaussian multiplicative chaos locally in $Y$.

\begin{lemma}[Exact stochastic scale invariance]
	For all $\lambda < 1$, the field $Y$ satisfies the following scaling relation:
	\begin{equation*}
		(Y_{\lambda \varepsilon}(\lambda x))_{|x| \leq \frac{1}{2}} \overset{d}{=} (Y_\varepsilon(x))_{|x| \leq \frac{1}{2}} + \Omega_{\lambda},
	\end{equation*}
	where $\Omega_\lambda$ is a centred Gaussian random variable with variance $\log \frac{1}{\lambda}$, independent of the field $Y$. 
	\label{lem:scale.invar}
\end{lemma}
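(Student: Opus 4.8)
The plan is to exploit the fact that every object occurring in the statement is a \emph{centred Gaussian} family: the asserted equality in law is then equivalent to an identity between covariance functions, and this identity can be read off directly from the explicit correlation structure specified in the definition of $(Y_\varepsilon)_{\varepsilon\in(0,1]}$. So the proof is, after the reduction, a short case check rather than anything geometric.

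Concretely, I would first note that $\lambda\in(0,1)$ gives $\lambda\varepsilon<\varepsilon\le 1$, so $Y_{\lambda\varepsilon}$ is one of the fields in the white-noise decomposition and $(Y_{\lambda\varepsilon}(\lambda x))_{|x|\le 1/2}$ is a centred Gaussian process. The same is true of $(Y_\varepsilon(x))_{|x|\le 1/2}+\Omega_\lambda$, where I take $\Omega_\lambda\sim N(0,\log\tfrac1\lambda)$ on an auxiliary probability space (legitimate since $\log\tfrac1\lambda>0$, and this choice makes $\Omega_\lambda$ independent of the whole field $Y$, as required). Since two centred Gaussian families with the same covariance function have the same law, it suffices to prove that
\begin{equation*}
	\e{Y_{\lambda\varepsilon}(\lambda x)\,Y_{\lambda\varepsilon}(\lambda y)} \;=\; \e{Y_{\varepsilon}(x)\,Y_{\varepsilon}(y)} + \log\tfrac{1}{\lambda}, \qquad |x|,\,|y|\le\tfrac12 ,
\end{equation*}
the shift $\log\tfrac1\lambda$ being $\var(\Omega_\lambda)$ and the cross term vanishing by independence.

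To verify this I would evaluate the left-hand side using the covariance formula from the definition, applied at scale $\lambda\varepsilon$ and at the two points $\lambda x,\lambda y$, whose separation is $\lambda|x-y|$. The point of the hypothesis $|x|,|y|\le\tfrac12$ is that it forces $|x-y|\le 1$, hence $\lambda|x-y|<1$; this excludes the regime ``$|x-y|>1$'' in which the covariance of $Y$ vanishes (and in which the claimed identity would be false), leaving only two cases. If $|x-y|\le\varepsilon$ then $\lambda|x-y|\le\lambda\varepsilon$, and the formula gives $\log\tfrac1{\lambda\varepsilon}+2\bigl(1-(|x-y|/\varepsilon)^{1/2}\bigr)=\log\tfrac1\lambda+\bigl(\log\tfrac1\varepsilon+2(1-(|x-y|/\varepsilon)^{1/2})\bigr)=\log\tfrac1\lambda+\e{Y_\varepsilon(x)Y_\varepsilon(y)}$. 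If instead $\varepsilon\le|x-y|\le 1$ then $\lambda\varepsilon\le\lambda|x-y|\le\lambda<1$, so the formula gives $\log\tfrac1{\lambda|x-y|}=\log\tfrac1\lambda+\log\tfrac1{|x-y|}=\log\tfrac1\lambda+\e{Y_\varepsilon(x)Y_\varepsilon(y)}$. This establishes the covariance identity and hence the lemma.

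I do not expect a serious obstacle: the only thing requiring care is the book-keeping in the case analysis, whose moral is precisely that the restriction $|x|\le\tfrac12$ keeps both points within the range where $\e{Y(x)Y(y)}$ is purely logarithmic, so that rescaling space by $\lambda$ translates into an additive shift of the covariance by $\log\tfrac1\lambda$. If one needs the scaling relation jointly in $\varepsilon$ (the form that is convenient in \cref{subs:moments}), the identical computation goes through once the cross-scale covariances $\e{Y_\varepsilon(x)Y_\eta(y)}$ are recorded from the white-noise decomposition, since in the range $|x-y|\le 1$ these again depend on the points only through $\log\tfrac1{|x-y|}$, up to the boundary correction at separations below $\varepsilon\wedge\eta$.
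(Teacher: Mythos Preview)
Your proposal is correct. The paper does not actually prove \cref{lem:scale.invar}: it is stated as a known property of the exactly scale-invariant field, with the white-noise decomposition taken from \cite{rhodes2013gaussian} and the scaling behaviour referenced to the literature on log-normal multiplicative cascades (e.g.\ \cite{allez2013lognormal}). Your covariance-matching argument is the standard route and the case analysis is clean; the key observation that the constraint $|x|,|y|\le \tfrac12$ keeps $|x-y|\le 1$ (so one never falls into the regime where the covariance vanishes) is exactly what makes the identity work. Your closing remark about the joint-in-$\varepsilon$ version is also well taken, though for the application in \cref{prop:positive} the single-scale statement already suffices.
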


\subsection{Liouville Brownian motion}\label{subs:lbm}

The Liouville Brownian motion is defined as a time change of a Brownian motion, with the path chosen independently from the field $h$. We will start the Brownian motion at the origin (assuming $0\in \mc D$), and run it until some a.s.~finite stopping time $T$. The following definition is non-trivial: for more details about the almost sure existence of the limit and other properties, see \cite{berestycki2013diffusion,garban2013liouville}.

\begin{definition}
Let $B$ be a planar Brownian motion, independent of the field $h$. For $\varepsilon > 0$ and $\gamma \in [0,2)$, define the regularised time change $F_{\gamma,\varepsilon}$ by
	\begin{equation*}
		F_{\gamma,\varepsilon}(t) = \int_0^{t \wedge T} e^{\gamma h_\varepsilon (B_s) - \frac{\gamma^2}{2} \e{h_\varepsilon(B_s)^2} } ds.
	\end{equation*}
	The time change $F_\gamma$ is defined as the limit
	\begin{equation*}
		F_\gamma(t) = \lim_{\varepsilon \to 0} F_{\gamma,\varepsilon}(t).
	\end{equation*}
	\label{def:time.change}
\end{definition}

\begin{definition}
	Using the same Brownian motion $B$ as in \cref{def:time.change}, we define the $\gamma$-Liouville Brownian motion ($\gamma$-LBM for short) $Z^\gamma$ as
	\begin{equation*}
		Z^\gamma_t = B_{F^{-1}_\gamma (t)}.
	\end{equation*}
\end{definition}

\begin{note}
	If we call $T_\alpha = \left\{ t \geq 0 \, : \, B_t \in \mc T_\alpha \right\}$ the set of times that the Brownian motion $B$ spends in $\alpha$-thick points, the set of times that the $\gamma$-LBM $Z^\gamma$ spends in $\alpha$-thick points is the image, under the map $F_\gamma$, of the times that $B$ spends in them, \ie $F_\gamma(T_\alpha) = \left\{ t\geq 0 \, : \, Z^\gamma_t \in T_\alpha \right\}$.
	\label{note:image}
\end{note}

As the Brownian path $B$ of the Liouville Brownian motion $Z^\gamma$ is independent of the Gaussian Free Field $h$, it will be useful to decompose the probability measure $\mb P$ as
\begin{equation*}
	\mb P = \mb P_B \otimes \mb P_h.
\end{equation*}
Decomposing $\mb P$ in this way will let us consider expectations on events which depend only on the field $h$ or the path $B$. 

\subsection{Hausdorff dimension}\label{subs:hd}
We will now recall the definition of the Hausdorff of a set, and collect some useful tools for finding upper and lower bounds for the Hausdorff dimension. Since we will be working in either $\mb R$ or $\mb R^2$, we will not state the definitions in their full generality. For more detail see, for example, Chapter 4 of \cite{morters2010brownian}. 

\begin{definition}
	Let $E \subset \mb R^n$. For $s \geq 0$ and $\delta > 0$ we define
	\begin{equation*}
		\mc H^s_\delta(E) = \inf \left\{ \sum_{i = 1}^\infty |E_i|^s \; : \; E \subset \bigcup_{i = 1}^\infty E_i \enspace \text{and} \enspace |E_i| < \delta \enspace \forall i \geq 1 \right\},
	\end{equation*}
	where $|E_i| = \sup\left\{ |x-y| \; : \; x,y\in E_i \right\}$ is the diameter of the set $E_i$. Then the limit
	\begin{equation*}
		\mc H^s(E) = \lim_{\delta \to 0} \mc H^s_\delta(E)
	\end{equation*}
	is the $s$-Hausdorff measure of $E$. 
\end{definition}

\begin{definition}
	The Hausdorff dimension of a set $E \subset \mb R^n$ is defined as
	\begin{equation*}
		\dim_H(E) = \inf \left\{ s \geq 0 \; : \; \mc H^s(E) = 0 \right\}.
	\end{equation*}
\end{definition}

One tool for finding bounds on the Hausdorff dimension of a set is to use H\"older continuity properties of functions. Indeed, if $f:\mb R^n \rightarrow \mb R^m$ is $\beta$-H\"older continuous, then for any set $E \subset \mb R^n$ we have
\begin{equation}
	\dim_H(f(E)) \leq \frac{1}{\beta}\dim_H(E),
	\label{eq:hausdorff}
\end{equation}
where $f(E) = \left\{ f(x) \; : \; x \in E \right\}$ is the image of $E$ under $f$. The assumption of H\"older continuity is too strong for our purpose. We now define what we call a $\beta$-H\"older-like function, and show that the property is strong enough that the inequality in \cref{eq:hausdorff} still holds. 

\begin{definition}\label{defn:holder.like}
	Let $f:\mb R \rightarrow \mb R$ be a continuous function, and let $E \subset \mb R$. We say that $f$ is $\beta$-H\"older-like on $E$ if there exist constants $C, R > 0$ such that
	\begin{equation*}
		|f(x) - f(x+r)| \leq C r^\beta
	\end{equation*}
	for all $r \in [0,R)$ and $x \in E$.%]
\end{definition}

\begin{proposition}
	Let $E \subset \mb R$, and suppose that $f:\mb R \rightarrow \mb R$ is increasing and $\beta$-H\"older-like on $E$. Then we have the bound
	\begin{equation*}
		\dim_H(f(E)) \leq \frac{1}{\beta} \dim_H(E).
	\end{equation*}
	\label{prop:holder}
\end{proposition}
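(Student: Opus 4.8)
The plan is to bound the Hausdorff measure of $f(E)$ directly from the $\beta$-H\"older-like estimate, using a cover of $E$ by small intervals. Fix $s \geq 0$ with $\dim_H(E) < \beta s$, so that $\mc H^{\beta s}(E) = 0$; it suffices to show $\mc H^s(f(E)) = 0$, since then $\dim_H(f(E)) \leq \beta s$ for all such $s$, and letting $\beta s \downarrow \dim_H(E)$ gives the claim. Let $C, R$ be the constants from \cref{defn:holder.like}. Given $\delta > 0$, pick a cover $E \subset \bigcup_i E_i$ with $|E_i| < \min(\delta, R)$ and $\sum_i |E_i|^{\beta s}$ as small as we like; without loss of generality each $E_i$ meets $E$, so choose $x_i \in E_i \cap E$. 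Since $E_i$ has diameter less than $R$, every point of $E_i$ is of the form $x_i + r$ with $|r| < R$; but the H\"older-like bound as stated only controls $r \in [0,R)$, so I would first observe that $f$ is increasing, hence on $E_i$ the values $f$ takes lie between $\inf f(E_i)$ and $\sup f(E_i)$, and both endpoints can be compared to $f(x_i)$ using points of $E_i$ to the right of, or at, $x_i$ — more carefully, one replaces $E_i$ by the interval $[\inf E_i, \sup E_i]$ and uses that $f$ increasing implies $f([\inf E_i, \sup E_i])$ is contained in $[f(\inf E_i), f(\sup E_i)]$, whose length is $f(\sup E_i) - f(\inf E_i)$.

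The main technical point is thus to bound $f(\sup E_i) - f(\inf E_i)$ by $C|E_i|^\beta$ using only the one-sided estimate at points of $E$. The subtlety is that $\inf E_i$ and $\sup E_i$ need not lie in $E$. To get around this I would argue as follows: enlarge the cover slightly. Since $x_i \in E$ and $E_i$ has diameter $< R$, we have $\sup E_i \leq x_i + |E_i|$ and $\inf E_i \geq x_i - |E_i|$, so by monotonicity
\begin{equation*}
	f(\sup E_i) - f(\inf E_i) \leq f(x_i + |E_i|) - f(x_i - |E_i|).
\end{equation*}
Now $f(x_i + |E_i|) - f(x_i) \leq C|E_i|^\beta$ directly from \cref{defn:holder.like} applied at $x_i \in E$ with $r = |E_i| \in [0,R)$. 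For the other half, $f(x_i) - f(x_i - |E_i|)$, I would note that this is not immediately covered, so the cleanest route is to first pass to a cover of $E$ in which each covering set is an interval with \emph{left} endpoint in $E$: given any cover, each $E_i$ meeting $E$ can be replaced by $[x_i, x_i + 2|E_i|]$ (still of diameter $< 2R$, and we may have shrunk $\delta,R$ at the outset to absorb the factor $2$), which still covers $E \cap E_i$ because every point of $E_i$ is within $|E_i|$ of $x_i$. Then for such an interval $[x_i, x_i + \ell_i]$ with $x_i \in E$ and $\ell_i = 2|E_i|$, monotonicity gives $|f([x_i, x_i+\ell_i])| = f(x_i + \ell_i) - f(x_i) \leq C \ell_i^\beta$ by the H\"older-like bound.

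With that cover in hand the estimate is routine: $f(E)$ is covered by the intervals $f([x_i, x_i + \ell_i])$, each of diameter at most $C\ell_i^\beta \leq C(2R)^\beta$, which can be made $< \delta'$ for any prescribed $\delta'$ by shrinking $R$, and
\begin{equation*}
	\sum_i |f([x_i, x_i + \ell_i])|^s \leq \sum_i \bigl(C \ell_i^\beta\bigr)^s = C^s 2^{\beta s} \sum_i |E_i|^{\beta s}.
\end{equation*}
The right-hand side can be made arbitrarily small since $\mc H^{\beta s}_\delta(E) \to 0$, whence $\mc H^s_{\delta'}(f(E))$ is arbitrarily small for every $\delta' > 0$, so $\mc H^s(f(E)) = 0$. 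Taking the infimum over $s$ with $\beta s > \dim_H(E)$ yields $\dim_H(f(E)) \leq \frac{1}{\beta}\dim_H(E)$. I expect the only real obstacle to be the bookkeeping around one-sidedness — making sure the covering sets can be taken to be intervals anchored at a point of $E$ so that the hypothesis of \cref{defn:holder.like}, which only bounds $f(x) - f(x+r)$ for $r \geq 0$ and $x \in E$, actually applies; monotonicity of $f$ is what makes this work, and it is the reason the proposition assumes $f$ increasing rather than merely continuous.
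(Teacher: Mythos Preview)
Your overall strategy is the same as the paper's, and the Hausdorff-measure bookkeeping at the end is fine. The place where your argument breaks is the covering replacement. You pick an arbitrary $x_i \in E_i \cap E$ and assert that $[x_i, x_i + 2|E_i|]$ still covers $E_i \cap E$ ``because every point of $E_i$ is within $|E_i|$ of $x_i$.'' That last clause only gives $E_i \subset (x_i - |E_i|, x_i + |E_i|)$; any point of $E_i \cap E$ lying to the \emph{left} of $x_i$ is simply not in $[x_i, x_i + 2|E_i|]$. Since $x_i$ was chosen arbitrarily, there is no reason such points should not exist, so the family you produce is not in general a cover of $E$. You cannot repair this just by choosing $x_i$ to be $\inf(E_i\cap E)$, because that infimum need not belong to $E$, and the hypothesis of \cref{defn:holder.like} only applies at points of $E$.

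The paper handles exactly this issue with a limiting argument that uses the continuity of $f$ (the one hypothesis you never invoke). It sets $a_i=\inf(E_i\cap E)$, $b_i=\sup(E_i\cap E)$, and takes $I_i=[a_i,b_i]$, which genuinely covers $E_i\cap E$. Then it picks a sequence $x_n\in E_i\cap E$ with $x_n\downarrow a_i$, applies the one-sided H\"older-like bound at each $x_n$ to get $|f(x_n)-f(b_i)|\le C|x_n-b_i|^\beta$, and lets $n\to\infty$ using continuity to conclude $|f(I_i)|=f(b_i)-f(a_i)\le C|I_i|^\beta$. If you graft this step in place of your replacement $E_i\mapsto[x_i,x_i+2|E_i|]$, the rest of your proof goes through essentially unchanged.
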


\begin{proof}
	Suppose that the radius and multiplicative constant for the H\"older-like property of $f$ are $R$ and $C$ respectively. 
	Let $s > \dim_H(E)$, and let $\varepsilon > 0$. Since $\mc H^s(E) = 0$, we know that there exists some $\delta_0$ such that $\mc H^s_\delta(E) \leq \varepsilon$ for all $\delta\in(0, \delta_0)$. 

	Fix a particular $\delta \in (0, \delta_0 \wedge R)$. Then we can find a cover $\left\{ E_i \right\}$ of $E$ with $|E_i| < \delta$ for all $i$ such that
	\begin{equation*}
		\sum_{i = 1}^\infty |E_i|^{s} < \varepsilon.
	\end{equation*}
	Without loss of generality, we may assume that the intersection $E \cap E_i$ is non-empty. Therefore, for each $E_i$ we can define an interval $I_i = [a_i, b_i]$, where $a_i = \inf \left\{ E_i \cap E \right\}$ and $b_i = \sup\left\{ E_i \cap E \right\}$. Then certainly $|I_i| < \delta$ for all $i$, the sets $\left\{ I_i \right\}$ cover $E$ and $\sum |I_i| < \varepsilon$.

	As $f$ is increasing, we know that
	\begin{equation*}
	|f(I_i)| = |f(a_i) - f(b_i)|.
	\end{equation*}
	Now, $a_i$ is a limit point of $E_i \cap E$, so we can find a sequence $\left\{ x_n \right\} \subset E_i \cap E$ such that $x_n \to a_i$ as $n \rightarrow \infty$. Since each $x_n \in E$, the $\beta$-H\"older-like property of $f$, tells us that
	\begin{equation*}
		|f(I_i)| \leq |f(a_i) - f(x_n)| + |f(x_n) - f(b_i)| \leq |f(a_i) - f(x_n)| + C|x_n - b_i|^\beta.
	\end{equation*}
	So, letting $n \rightarrow \infty$, and recalling that $f$ is continuous (by assumption), we see that
	\begin{equation*}
		|f(I_i)| \leq C|a_i - b_i|^\beta = C |I_i|^\beta.
	\end{equation*}
	Therefore, we can deduce that
	\begin{equation*}
		\sum_{i = 1}^\infty |f(I_i)|^{\frac{s}{\beta}} \leq C \sum_{i = 1}^\infty |I_i|^s < C\varepsilon.
	\end{equation*}

	Since $\left\{ f(I_i) \right\}$ covers $f(E)$, we have shown that
	\begin{equation*}
		\mc H^{\frac{s}{\beta}}_{2\delta}(f(E)) < C\varepsilon.
	\end{equation*}
	We may now let $\delta \downarrow 0$ and then $\varepsilon \downarrow 0$ to see that $\mc H^{\frac{s}{\beta}}(f(E)) = 0$, and hence 
	\begin{equation*}
		\dim_H(f(E)) \leq \frac{s}{\beta}.
	\end{equation*}
	Now letting $s \downarrow \dim_H(E)$ gives the desired result. 
\end{proof}

\section{Proofs of the Main Theorems}\label{sec:proofs}
One of the tools we use in the proof of the lower bound is the exact stochastic scale invariance of the auxiliary field $Y$. In order to do that, we need to ensure that we consider times when the Brownian motion $B$ does not stray too far from the origin. Therefore, we define the stopping time $\tau = \inf \left\{ t \geq 0 \; : \; B_t \not \in B(0,\frac{1}{2}) \right\}$, where $B(0,\frac{1}{2})$ is the ball of radius $\frac{1}{2}$ centred at the origin. For simplicity, we will assume that our domain contains the ball of radius $\frac{1}{2}$, $B(0,\frac{1}{2}) \subset \mc D$.

\subsection{Moments of $F_\gamma$ around a thick point}\label{subs:moments}

We first need to obtain estimates on the moments of the time change $F_\gamma$ around $\alpha$-thick points of the free field. We will use these bounds in \cref{subsec:holder} to derive H\"older-like properties of $F_\gamma$.

Since the law of the GFF conditional on the origin being a thick point is that of an independent zero boundary GFF plus a log singularity, $h(z) \overset{d}{=} \tilde h(z) - \alpha \log |z|$, the effect on the measure is to divide by $|z|^{\alpha\gamma}$. That is why we are thinking the results in this section as results about $F_\gamma$ close to thick points. 

\begin{proposition}[A positive moment is bounded]
	Let $\alpha, \gamma \in [0,2)$. Then, for some $p>0$, there exists a finite constant $C_p$ such that%]
	\begin{equation*}
		\sup_{\varepsilon\in[0,1)} \e{\left( \int_0^\tau \frac{e^{\gamma h_\varepsilon(B_s) - \frac{\gamma^2}{2}\e{h_\varepsilon(B_s)^2}}}{\left( |B_s| + \varepsilon \right)^{\alpha\gamma}} ds \right)^p} \leq C_p.
	\end{equation*}
	\label{prop:positive}
\end{proposition}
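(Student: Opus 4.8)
The plan is to fix a small exponent $p \in (0,1)$, depending only on $\alpha$ and $\gamma$ and chosen at the very end, and to control the integral scale by scale around the origin. Decompose space into the dyadic annuli $A_k = \{ z : 2^{-k-1} \le |z| \le 2^{-k} \}$, $k \ge 0$, write $F$ for the integral inside the expectation and $F_k = \int_0^\tau \ind_{B_s \in A_k}(|B_s|+\varepsilon)^{-\alpha\gamma} e^{\gamma h_\varepsilon(B_s) - \frac{\gamma^2}{2}\e{h_\varepsilon(B_s)^2}}\, ds$ for its restriction to $\{B_s \in A_k\}$, so $F = \sum_{k \ge 0} F_k$. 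Splitting off the contribution $F_{<\varepsilon} = \sum_{k : 2^{-k} < \varepsilon} F_k$ of the scales below $\varepsilon$ and using subadditivity of $t \mapsto t^p$ — first to separate $F_{<\varepsilon}$, then over the remaining $k$ — it suffices to bound $\e{(F_{<\varepsilon})^p}$ and each $\e{F_k^p}$ for $2^{-k} \ge \varepsilon$, the latter by the general term of a $k$-summable series, all with constants independent of $\varepsilon$; the value $\varepsilon = 0$, interpreted through the a.s.\ limit defining $F_\gamma$, then follows by Fatou's lemma. For $2^{-k} \ge \varepsilon$ we bound $(|B_s| + \varepsilon)^{-\alpha\gamma} \le 2^{(k+1)\alpha\gamma}$ on $A_k$, so that $F_k \le 2^{(k+1)\alpha\gamma} G_k$ with $G_k = \int_0^\tau \ind_{B_s \in A_k}\, e^{\gamma h_\varepsilon(B_s) - \frac{\gamma^2}{2}\e{h_\varepsilon(B_s)^2}}\, ds$.

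The heart of the argument is a scaling estimate for $G_k$. Using the Markov property (\cref{prop:markov}) to split $h = h^{har}_k + \tilde h_k$ on $\mc U_k = B(0, 2^{-k+1})$, the conformal invariance of the zero-boundary GFF $\tilde h_k$ (the similarity $\mc U_k \to B(0,1)$ turns it into a GFF $\hat h$ on the unit disc, with circle averages transforming accordingly), and the covariance estimate of \cref{lem:cov.bound} (which gives $\e{(h^{har}_k)_\varepsilon(z)^2} = k\log 2 + O(1)$ for $z \in A_k$), one obtains that on $A_k$, for $\varepsilon$ below a fixed multiple of $2^{-k}$,
\begin{equation*}
	e^{\gamma h_\varepsilon(z) - \frac{\gamma^2}{2}\e{h_\varepsilon(z)^2}} \;\le\; C\, 2^{-\frac{\gamma^2}{2} k}\, e^{\gamma \Xi_k}\, e^{\gamma S_k}\cdot e^{\gamma \hat h_{\varepsilon'}(2^{k-1}z) - \frac{\gamma^2}{2}\e{\hat h_{\varepsilon'}(2^{k-1}z)^2}},
\end{equation*}
where $\varepsilon' = 2^{k-1}\varepsilon$ stays bounded, $\Xi_k$ is a centred Gaussian of variance $k\log 2 + O(1)$ — the value of the harmonic part near $A_k$, which is what generates the gain $2^{-\gamma^2 k/2}$ — and $S_k$ is the supremum over $A_k$ of the oscillation of the harmonic part about that value, which after rescaling is the supremum of a Gaussian field whose covariance is bounded uniformly in $k$, hence has exponential moments bounded uniformly in $k$. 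An alternative, and the route I would actually take, is to compare $h$ on $\mc U_k$ with the exactly scale-invariant field $Y$ of \cref{subs:log.field} via Kahane's convexity inequality and then apply the exact scaling relation of \cref{lem:scale.invar}, thus avoiding any direct estimate on harmonic functions. Rescaling the Brownian path by $2^{k-1}$ turns $\{B_s \in A_k\}$ into membership in a fixed annulus $A'_0 \subset B(0,1)$, at the cost of a factor $2^{-2k}$ from the time change and of running the rescaled path until it leaves $B(0, 2^{k-2})$.

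To conclude, condition on the rescaled path. Each GMC density at cutoff $\varepsilon'$ has unit mean over $\hat h$, so concavity of $t \mapsto t^p$ bounds the unit-scale integral by $L^p$, where $L$ is the occupation time of $A'_0$ before the rescaled path exits $B(0, 2^{k-2})$; the Green's function of a large disc gives $\eb{L} \asymp k$, whence $\eb{L^p} \le (\eb{L})^p \le C\, k^p$. Since $\Xi_k$ and $S_k$ are independent of $\hat h$ and of the path, a H\"older splitting with a small parameter $\delta > 0$ gives $\e{e^{p\gamma\Xi_k} e^{p\gamma S_k}} \le C_\delta\, 2^{\frac{p^2\gamma^2}{2}(1+\delta)k}$, and combining everything,
\begin{equation*}
	\e{F_k^p} \;\le\; C_{p,\delta}\, k^p\, 2^{\, k p\left( \alpha\gamma - 2 - \frac{\gamma^2}{2} + \frac{p\gamma^2}{2}(1+\delta) \right)} .
\end{equation*}
This is $k$-summable once $\alpha\gamma - 2 - \frac{\gamma^2}{2} < 0$ and $p, \delta$ are small enough; and for all $\alpha, \gamma \in [0,2)$ one has $2 + \frac{\gamma^2}{2} - \alpha\gamma \ge 2 + \frac{\gamma^2}{2} - 2\gamma = \frac12(\gamma - 2)^2 > 0$. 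The finer-than-$\varepsilon$ term $F_{<\varepsilon}$ is handled by the same mechanism but more simply: bounding $(|B_s| + \varepsilon)^{-\alpha\gamma} \le \varepsilon^{-\alpha\gamma}$, comparing $h_\varepsilon$ on $B(0,\varepsilon)$ to $h_\varepsilon(0)$ plus a uniformly controlled oscillation, and using $\eb{(\text{occupation time of } B(0,\varepsilon))^p} \le C(\varepsilon^2 \log\tfrac1\varepsilon)^p$, gives a bound of order $(\log\tfrac1\varepsilon)^p\, \varepsilon^{\, p(2 + \gamma^2/2 - \alpha\gamma) - \frac{p^2\gamma^2}{2}(1+\delta)}$, which is bounded over $\varepsilon \in (0,1)$ once $p$ is small; the bounded range of $k$ (the bulk) is controlled crudely by $\eb{\tau^p} < \infty$. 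Summing the three contributions gives the claimed bound, uniformly in $\varepsilon \in [0,1)$.

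I expect the scaling estimate of the second paragraph to be the main obstacle: one must cleanly separate the ``coarse'' part of the field on a small ball — which behaves like a single Gaussian whose variance grows linearly in the number of scales, and which produces the decisive factor $2^{-\gamma^2 k/2}$ — from the ``fine'' part, while keeping every error term (the oscillation of the harmonic projection, and the discrepancy between the GFF circle average and an exactly scale-invariant field) bounded uniformly in the scale $k$. This is precisely the place where the long-range correlations carried along by the Brownian path make the situation heavier than in the exactly scale-invariant setting, and where the auxiliary field $Y$ together with Kahane's inequality enter.
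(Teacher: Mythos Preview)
Your strategy is correct and leads to a proof, but it is genuinely different from the paper's. The paper first applies Kahane's convexity inequality to replace $h$ by the exactly scale-invariant field $Y$, and then runs a \emph{recursion in the regularisation scale}: with $\sigma$ the exit time from $B(0,\tfrac{1}{2\sqrt 2})$, the piece on $[0,\sigma]$ is rewritten, via Brownian scaling and the exact relation $Y_{\lambda\varepsilon}(\lambda x)\overset{d}{=}Y_\varepsilon(x)+\Omega_\lambda$, as a constant times the same quantity at scale $\varepsilon\sqrt 2$, while the piece on $[\sigma,\tau]$ is handled by a return-time argument (strong Markov at the first return to $B(0,R)$). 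This yields $E_n\le \rho\,E_{n-1}+\tilde M$ along $\varepsilon_n=2^{-n/2}$, with $\rho<1$ once $p$ and then $R$ are chosen small, hence uniform boundedness.

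Your route is a \emph{spatial dyadic decomposition}: you sum $\e{F_k^p}$ over annuli $A_k$ and estimate each term directly, using the GFF Markov property on $B(0,2^{-k+1})$ to split off the coarse Gaussian $\Xi_k$ (responsible for the factor $2^{-\gamma^2 k/2}$ via the variance correction) and a harmonic oscillation $S_k$ whose rescaled law has covariance bounded uniformly in $k$, together with Brownian scaling and the logarithmic Green's function in a large disc to get $\eb{L}\asymp k$. The exponent you obtain, $p\big(\alpha\gamma-2-\tfrac{\gamma^2}{2}\big)+\tfrac{p^2\gamma^2}{2}(1+\delta)$, is negative for small $p$ precisely because $2+\tfrac{\gamma^2}{2}-\alpha\gamma\ge\tfrac12(\gamma-2)^2>0$, the same inequality that drives the paper's $\rho<1$. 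The paper's recursive argument is cleaner once exact scale invariance is available and sidesteps any estimate on the harmonic oscillation; your decomposition is more explicit, makes the governing exponent transparent, and avoids the return-time device. One point to make fully rigorous in your write-up is the uniform (in $k$ and $\varepsilon$) sub-Gaussian bound on $S_k$; this follows from Borell--TIS after rescaling, using that the increment variance of $h^{har}_k$ on $A_k$ is $O(1)$ (the singular parts of $G_{\mc D}$ and $G_{\mc U_k}$ cancel), but it deserves a line.
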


\begin{proof}
	By Kahane's convexity inequality (Lemma 2 in \cite{kahane1985chaos}), taking the measure $\nu(ds) = \frac{ds}{(|B_s| + \varepsilon)^{\alpha\gamma}}$, it is sufficient to prove the proposition for the scale invariant field $Y$.

	Let $\sigma$ be the first time that $B$ leaves the disc of radius $\frac{1}{2\sqrt 2}$. Then, by subadditivity of $x \mapsto x^p$ for $p \in (0,1)$, we know that
	\begin{align}
		\nonumber\mb E \left[ \left( \int_0^\tau \frac{e^{\gamma Y_\varepsilon(B_s) - \frac{\gamma^2}{2}\e{Y_\varepsilon(B_s)^2}}}{\left( |B_s| + \varepsilon \right)^{\alpha\gamma}} ds \right)^p \right]
		& \leq \mb E \left[ \left( \int_0^\sigma \frac{e^{\gamma Y_\varepsilon(B_s) - \frac{\gamma^2}{2}\e{Y_\varepsilon(B_s)^2}}}{\left( |B_s| + \varepsilon \right)^{\alpha\gamma}} ds \right)^p \right] \\
		& + \mb E \left[ \left( \int_\sigma^\tau \frac{e^{\gamma Y_\varepsilon(B_s) - \frac{\gamma^2}{2}\e{Y_\varepsilon(B_s)^2}}}{\left( |B_s| + \varepsilon \right)^{\alpha\gamma}} ds \right)^p \right],
		\label{eq:phone}
	\end{align}
	and it is sufficient to find a uniform upper bound for the right hand side. 

	We will first find a uniform bound for the second term on the right hand side of \cref{eq:phone}. Let $R < \frac{1}{2\sqrt 2}$ be fixed, which we will choose later, and define the time $\tau_R = \inf\left\{ t > \sigma \, : \, |B_t| \leq R \right\}$ that the Brownian motion returns to the ball of radius $R$ after it reaches the circle of radius $\frac{1}{2\sqrt 2}$. 

	On the event $\left\{ \tau_R > \tau \right\}$, we know that $|B_t| > R$ for all $t \in (\sigma, \tau)$. Therefore we find the bound
	\begin{align}
		\nonumber \mb E & \left[ \left( \int_\sigma^\tau \frac{e^{\gamma Y_\varepsilon(B_s) - \frac{\gamma^2}{2}\e{Y_\varepsilon(B_s)^2}}}{\left( |B_s| + \varepsilon \right)^{\alpha\gamma}} ds \right)^p \ind_{\{\tau_R > \tau\}} \right] \leq \\
		&\nonumber \hspace{2em} \leq R^{-\alpha\gamma p} \mb E \left[ \left( \int_\sigma^\tau e^{\gamma Y_\varepsilon(B_s) - \frac{\gamma^2}{2} \e{ Y_\varepsilon(B_s)^2 }} ds \right)^p \ind_{\{\tau_R > \tau\}} \right] \\
		&\hspace{2em} \leq R^{-\alpha\gamma p} \mb E \left[ \left( \int_\sigma^\tau e^{\gamma Y_\varepsilon(B_s) - \frac{\gamma^2}{2} \e{ Y_\varepsilon(B_s)^2 }} ds \right)^p \right]
		\label{eq:ticket}
	\end{align}
	Now, the $L^1$ norm of the regularised change of time process is uniformly bounded in $\varepsilon$ and, since  $p<1$, the expectation of the $p^{th}$ power of it on the event must also be uniformly bounded in $\varepsilon$. We will call the uniform bound $M$.  

	On the event $\left\{ \tau_R < \tau \right\}$ we will split up the interval $\left( \sigma, \tau \right)$ into $(\sigma, \tau_R)$ and $(\tau_R, \tau)$, using the sub-additivity of $x \mapsto x^p$ as before to find that
	\begin{align}
		\nonumber\mb E & \left[ \left( \int_\sigma^\tau \frac{e^{\gamma Y_\varepsilon(B_s) - \frac{\gamma^2}{2}\e{Y_\varepsilon(B_s)^2}}}{\left( |B_s| + \varepsilon \right)^{\alpha\gamma}} ds \right)^p \ind_{ \{\tau_R < \tau \} }\right] \leq \\
		\nonumber&\hspace{4em} \leq \mb E \left[ \left( \int_\sigma^{\tau_R} \frac{e^{\gamma Y_\varepsilon(B_s) - \frac{\gamma^2}{2}\e{Y_\varepsilon(B_s)^2}}}{\left( |B_s| + \varepsilon \right)^{\alpha\gamma}} ds \right)^p \ind_{ \{\tau_R < \tau \} }\right] +  \\
		&\hspace{4em} + \mb E \left[ \left( \int_{\tau_R}^\tau \frac{e^{\gamma Y_\varepsilon(B_s) - \frac{\gamma^2}{2}\e{Y_\varepsilon(B_s)^2}}}{\left( |B_s| + \varepsilon \right)^{\alpha\gamma}} ds \right)^p \ind_{ \{\tau_R < \tau \} }\right],
		\label{eq:coffee}
	\end{align}

	Consider the first term on the right hand side of \cref{eq:coffee}. Similarly to before, we know that $|B_t| > R$ for all $t \in (\sigma, \tau_R)$, and so we can bound the expectation uniformly by
	\begin{align}
		\nonumber \mb E & \left[ \left( \int_\sigma^{\tau_R} \frac{e^{\gamma Y_\varepsilon(B_s) - \frac{\gamma^2}{2}\e{Y_\varepsilon(B_s)^2}}}{\left( |B_s| + \varepsilon \right)^{\alpha\gamma}} ds \right)^p \ind_{\{\tau_R < \tau\}} \right] \leq \\
		&\nonumber \hspace{2em} \leq R^{-\alpha\gamma p} \mb E \left[ \left( \int_\sigma^{\tau_R} e^{\gamma Y_\varepsilon(B_s) - \frac{\gamma^2}{2} \e{ Y_\varepsilon(B_s)^2 }} ds \right)^p \ind_{\{\tau_R < \tau\}} \right] \\
		&\nonumber \hspace{2em} \leq R^{-\alpha\gamma p} \mb E \left[ \left( \int_\sigma^\tau e^{\gamma Y_\varepsilon(B_s) - \frac{\gamma^2}{2} \e{ Y_\varepsilon(B_s)^2 }} ds \right)^p \right]\\
		&\hspace{2em} \leq R^{-\alpha\gamma p}M.
		\label{eq:sofa}
	\end{align}
	To deal with the interval $(\tau_R, \tau)$, let $W$ be another Brownian motion, with $W_0 = B_{\tau_R}$ and which, for $t > 0$, evolves independently of $B$. Let $T = \inf\left\{ t > 0 \; : \; W_t \not \in B(0, \frac{1}{2}) \right\}$. Then, by the strong Markov property of Brownian motion, we see that
	\begin{equation*}
		\left( \int_{\tau_R}^\tau \frac{e^{\gamma Y_\varepsilon(B_s) - \frac{\gamma^2}{2}\e{Y_\varepsilon(B_s)^2}}}{\left( |B_s| + \varepsilon \right)^{\alpha\gamma}} ds \right)^p \ind_{ \{\tau_R < \tau \} }
		\overset{d}{=} \left( \int_0^T \frac{e^{\gamma Y_\varepsilon(W_s) - \frac{\gamma^2}{2}\e{Y_\varepsilon(W_s)^2}}}{\left( |W_s| + \varepsilon \right)^{\alpha\gamma}} ds \right)^p \ind_{ \{\tau_R < \tau \} },
	\end{equation*}
	and that $\int_0^T \frac{e^{\gamma Y_\varepsilon(W_s) - \frac{\gamma^2}{2}\e{Y_\varepsilon(W_s)^2}}}{\left( |W_s| + \varepsilon \right)^{\alpha\gamma}} ds$ is independent of $\ind_{ \left\{ \tau_R < \tau \right\} }$. 
	Therefore, we see that
	\begin{align}
		\nonumber&\e{	\left( \int_{\tau_R}^\tau \frac{e^{\gamma Y_\varepsilon(B_s) - \frac{\gamma^2}{2}\e{Y_\varepsilon(B_s)^2}}}{\left( |B_s| + \varepsilon \right)^{\alpha\gamma}} ds \right)^p \ind_{ \{\tau_R < \tau \} } } = \\
		&\hspace{5em} = \e{ \left( \int_0^T \frac{e^{\gamma Y_\varepsilon(W_s) - \frac{\gamma^2}{2}\e{Y_\varepsilon(W_s)^2}}}{\left( |W_s| + \varepsilon \right)^{\alpha\gamma}} ds \right)^p } \p{\tau_R < \tau}.
		\label{eq:sock}
	\end{align}
	Because we started $W$ closer to the boundary of $B(0, \frac{1}{2})$ we know that, on average, it has a shorter lifespan than $B$, and so
	\begin{equation}
		\e{ \left( \int_0^T \frac{e^{\gamma Y_\varepsilon(W_s) - \frac{\gamma^2}{2}\e{Y_\varepsilon(W_s)^2}}}{\left( |W_s| + \varepsilon \right)^{\alpha\gamma}} ds \right)^p } 
		\leq \e{ \left( \int_0^\tau \frac{e^{\gamma Y_\varepsilon(B_s) - \frac{\gamma^2}{2}\e{Y_\varepsilon(B_s)^2}}}{\left( |B_s| + \varepsilon \right)^{\alpha\gamma}} ds \right)^p }.
		\label{eq:book}
	\end{equation}
	Combining \cref{eq:sock} and \cref{eq:book} gives us the bound
	\begin{align}
		\nonumber &\e{	\left( \int_{\tau_R}^\tau \frac{e^{\gamma Y_\varepsilon(B_s) - \frac{\gamma^2}{2}\e{Y_\varepsilon(B_s)^2}}}{\left( |B_s| + \varepsilon \right)^{\alpha\gamma}} ds \right)^p \ind_{ \{\tau_R < \tau \} } } \leq \\
		&\hspace{4em} \leq\e{ \left( \int_0^\tau \frac{e^{\gamma Y_\varepsilon(B_s) - \frac{\gamma^2}{2}\e{Y_\varepsilon(B_s)^2}}}{\left( |B_s| + \varepsilon \right)^{\alpha\gamma}} ds \right)^p } \p{\tau_R < \tau}.
		\label{eq:flapjack}
	\end{align}

	Now, consider the first term on the right hand side of \cref{eq:phone}. Scaling time by a factor of $\frac{1}{2}$ and space by a factor of $\frac{1}{\sqrt 2}$ gives
	\begin{align*}
	\int_0^\sigma \frac{e^{\gamma Y_\varepsilon(B_s) - \frac{\gamma^2}{2}\e{Y_\varepsilon(B_s)^2}}}{\left( |B_s| + \varepsilon \right)^{\alpha\gamma}} ds
	&= 2^{-1} \int_0^{2\sigma} \frac{e^{\gamma Y_\varepsilon(B_{u/2}) - \frac{\gamma^2}{2}\e{Y_\varepsilon(B_{u/2})^2}}}{\left( |B_{u/2}| + \varepsilon \right)^{\alpha\gamma}} du\\
	&\overset{d}{=} 2^{-(1 - \frac{\alpha\gamma}{2})}\int_0^{\tilde{\tau}} \frac{e^{\gamma Y_\varepsilon(\frac{1}{\sqrt 2} \tilde B_{u}) - \frac{\gamma^2}{2}\e{Y_\varepsilon(\frac{1}{\sqrt 2} \tilde B_{u})^2}}}{\left( |\tilde B_{u}| + \varepsilon\sqrt 2 \right)^{\alpha\gamma}} du\\
	&\overset{d}{=} 2^{-(1 - \frac{\alpha\gamma}{2} + \frac{\gamma^2}{4})} e^{\gamma\Omega_{\sqrt 2}} \int_0^{\tilde{\tau}} \frac{e^{\gamma Y_{\varepsilon\sqrt 2}(\tilde B_{u}) - \frac{\gamma^2}{2}\e{Y_{\varepsilon\sqrt 2}(\tilde B_{u})^2}}}{\left( |\tilde B_{u}| + \varepsilon\sqrt 2 \right)^{\alpha\gamma}} du,
	\end{align*}
	the last line coming from \cref{lem:scale.invar}, where $\tilde B$ is an independent Brownian motion, $\tilde \tau$ is the time that $\tilde B$ leaves the disc of radius $\frac{1}{2}$, and $\Omega_{\sqrt 2}$ is a centred Gaussian random variable with variance $\log \sqrt 2$. Therefore, when we take the $p$th moment, we find
	\begin{align}
		\nonumber \mb E & \left[ \left( \int_0^\sigma \frac{e^{\gamma Y_\varepsilon(B_s) - \frac{\gamma^2}{2}\e{Y_\varepsilon(B_s)^2}}}{\left( |B_s| + \varepsilon \right)^{\alpha\gamma}} ds \right)^p \right]= \\
		& \hspace{4em}= 2^{\frac{\gamma^2}{4} p^2 - (1 - \frac{\alpha\gamma}{2} + \frac{\gamma^2}{4})p} \mb E \left[ \left( \int_0^{\tilde{\tau}} \frac{e^{\gamma Y_{\varepsilon\sqrt 2}(\tilde B_{u}) - \frac{\gamma^2}{2}\e{Y_{\varepsilon\sqrt 2}(\tilde B_{u})^2}}}{\left( |\tilde B_{u}| + \varepsilon\sqrt 2 \right)^{\alpha\gamma}} du \right)^p \right].
		\label{eq:brownie}
	\end{align}
	Let us define a sequence of scales by setting $\varepsilon_n = 2^{-\frac{n}{2}}$, for $n \in \mb N$, and call the expectation
	\begin{equation*}
		E_n = \mb E \left[ \left( \int_0^\tau \frac{e^{\gamma Y_{\varepsilon_n}(B_s) - \frac{\gamma^2}{2}\e{Y_{\varepsilon_n}(B_s)^2}}}{\left( |B_s| + \varepsilon_n \right)^{\alpha\gamma}} ds \right)^p \right].
	\end{equation*}
	Using this notation, we substitute the scaling relation in \cref{eq:brownie}, the uniform bounds in \cref{eq:ticket} and \cref{eq:sofa}, and the inequality in \cref{eq:flapjack} into \cref{eq:phone}, to see that, for $n \geq 1$, 
	\begin{equation}
		E_n \leq 2^{\frac{\gamma^2}{4}p^2 - (1 - \frac{\alpha\gamma}{2} + \frac{\gamma^2}{4})p}E_{n-1} + 2R^{-\alpha\gamma p}M + E_n \mb P[\tau_R < \tau].
		\label{eq:sunglasses}
	\end{equation}
	Upon re-arrangement, the inequality in \cref{eq:sunglasses} becomes
	\begin{equation*}
		E_n \leq \left( \frac{2^{\frac{\gamma^2}{4}p^2 - (1 - \frac{\alpha\gamma}{2} + \frac{\gamma^2}{4})p}}{1 - \mb P [\tau_R < \tau]} \right) E_{n - 1} + \frac{2R^{-\alpha\gamma p}M}{1 - \mb P [\tau_R < \tau]}.
	\end{equation*}
	By choosing $p>0$ first and then $R>0$ fixed and small enough, we can ensure that the factor multiplying $E_{n-1}$, $\frac{2^{\frac{\gamma^2}{4}p^2 - (1 - \frac{\alpha\gamma}{2} + \frac{\gamma^2}{4})p}}{1 - \mb P [\tau_R < \tau]}$, is less than $1$. When we have done this, what we have shown is that, for some $\rho \in (0,1)$ and some constant $\tilde M$, we have $E_n < \rho E_{n-1} + \tilde M$, which implies that the sequence $\left\{ E_n \right\}_{n \in \mb N}$ is bounded.
\end{proof}

\begin{proposition}[A negative moment is bounded]
	Let $\alpha, \gamma \in [0,2)$. Then there exists a finite constant $C_{-1}$ such that%]
	\begin{equation*}
		\sup_{\varepsilon\in[0,1)} \e{\left( \int_0^{1 \wedge \tau} \frac{e^{\gamma h_\varepsilon(B_s) - \frac{\gamma^2}{2}\e{h_\varepsilon(B_s)^2}}}{\left( |B_s| + \varepsilon \right)^{\alpha\gamma}} \right)^{-1}} \leq C_{-1}.
	\end{equation*}
\end{proposition}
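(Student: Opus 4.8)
The plan is to reduce the estimate to a uniform-in-$\varepsilon$ lower-tail bound for a clock increment of the exactly scale invariant field, and to prove the latter by a scale recursion built on the independence structure of $Y$ and the occupation geometry of the Brownian path.

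\textbf{Reductions.} As $\alpha,\gamma\in[0,2)$ gives $\alpha\gamma\ge 0$ and $|B_s|+\varepsilon<\tfrac32$ for all $s<\tau$, $\varepsilon<1$, we have $(|B_s|+\varepsilon)^{-\alpha\gamma}\ge(\tfrac32)^{-\alpha\gamma}$; dropping this weight only enlarges the reciprocal, so the singularity is irrelevant here and it suffices to bound $\sup_\varepsilon\e{\bigl(\int_0^{1\wedge\tau}e^{\gamma h_\varepsilon(B_s)-\frac{\gamma^2}2\e{h_\varepsilon(B_s)^2}}ds\bigr)^{-1}}$. Conditioning on $B$ (independent of $h$), Kahane's convexity inequality applied to the convex function $x\mapsto 1/x$ and to the occupation measure of $B$ on $[0,1\wedge\tau]$ lets us replace $\gamma h_\varepsilon$ by $\gamma Y_\varepsilon$: by \cref{lem:cov.bound}, $\gamma^2\e{h_\varepsilon(x)h_\varepsilon(y)}\le\gamma^2\e{Y_\varepsilon(x)Y_\varepsilon(y)}+C$ for $x,y\in B(0,\tfrac12)$, $\varepsilon<1$, and the additive constant is absorbed into an independent Gaussian factor with finite negative exponential moment (to apply Kahane's inequality to the non-polynomially-bounded $x\mapsto 1/x$ one first applies it to the tangent line of $1/x$ at the point $1/K$ and lets $K\rightarrow\infty$). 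This reduces matters to proving $\sup_{\varepsilon\in(0,1)}\e{X_\varepsilon^{-1}}<\infty$ where $X_\varepsilon:=\int_0^{1\wedge\tau}e^{\gamma Y_\varepsilon(B_s)-\frac{\gamma^2}2\e{Y_\varepsilon(B_s)^2}}ds$; the case $\varepsilon=0$ then follows by Fatou's lemma and the a.s.\ positivity of $F_\gamma$ from \cite{berestycki2013diffusion,garban2013liouville}.

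\textbf{The lower-tail bound.} Since $\e{X_\varepsilon^{-1}}=\int_0^\infty\p{X_\varepsilon<1/t}\,dt\le 1+\int_0^1\delta^{-2}\p{X_\varepsilon<\delta}\,d\delta$, it is enough to control $\p{X_\varepsilon<\delta}$ uniformly in $\varepsilon$. Fix a large integer $K$, partition $B(0,\tfrac12)$ into sub-balls of radius $\asymp 1/K$, and split $Y_\varepsilon=Y^{\mathrm c}+Y^{\mathrm f}_\varepsilon$ via the white noise decomposition, where $Y^{\mathrm c}$ carries the scales in $[1/K,1]$ (a.s.\ continuous on $B(0,\tfrac12)$, with Gaussian sup-norm tails) and $Y^{\mathrm f}_\varepsilon$ is mutually independent over families of well-separated sub-balls. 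Restricting the integral in $X_\varepsilon$ to the excursions of $B$ that properly enter $\asymp K^2/\log K$ pairwise well-separated sub-balls before $1\wedge\tau$, rescaling each excursion by $1/K$ in space and $1/K^2$ in time, and using Brownian scaling together with the exact scale invariance of \cref{lem:scale.invar}, one bounds $X_\varepsilon$ from below by $e^{-\gamma\|Y^{\mathrm c}\|_\infty}K^{-(2+\gamma^2/2)}\sum_i e^{\gamma\Omega_i}X^{(i)}_{K\varepsilon}$: a coarse factor times a sum of independent rescaled copies of a clock increment at regularisation $K\varepsilon$, with $\Omega_i$ independent $\mathcal N(0,\log K)$. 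Combining $\p{e^{\gamma\Omega_1}X^{(1)}_{K\varepsilon}<t}\le tK^{\gamma^2/2}\e{X_{K\varepsilon}^{-1}}$, the probability that all $\asymp K^2/\log K$ summands are small, the Gaussian tail of $\|Y^{\mathrm c}\|_\infty$, and a concentration estimate for the Brownian occupation geometry then leads (after handling the small-probability exceptional paths, see below) to a recursion $\e{X_\varepsilon^{-1}}\le\Phi_K\bigl(\e{X_{K\varepsilon}^{-1}}\bigr)$ in which the $\asymp K^2/\log K$-fold power overwhelms every polynomial-in-$K$ factor; for $K$ fixed large enough $\Phi_K$ stabilises a bounded interval, and iterating up from scale $\varepsilon$ to a scale in $[1/K,1)$ — where finiteness of the negative moment is elementary — gives the uniform bound. (Alternatively, once reduced to $Y$ one can invoke existing negative-moment estimates for Gaussian multiplicative chaos integrated against measures satisfying a uniform lower mass-decay condition, after checking that the planar Brownian occupation measure satisfies it.)

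\textbf{Main obstacle.} The crux, and the reason one cannot merely rerun \cref{prop:positive} or quote the Lebesgue-measure negative-moment bounds, is that for $x\mapsto 1/x$ the helpful subadditivity of $x^p$ is replaced by $1/(a+b)\le 1/a$, while the clock mass \emph{shrinks} by the factor $K^{-(2+\gamma^2/2)}<1$ under refinement of scale; so a single rescaled copy is useless and one must sum many of them, whose near-independence can only come from the white noise decomposition of $Y$ (the analogue of the Markov property of the GFF). The real work is then on the Brownian side: one needs exponential control on how many pairwise well-separated sub-balls the path properly visits before $1\wedge\tau$ and on the time spent in each, and one must carry the small-probability exceptional-path contribution through the recursion without destroying the uniformity in $\varepsilon$. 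This is where the long-range correlations introduced by the Brownian motion, flagged in the introduction, make the argument genuinely more delicate than in \cite{rhodes2013gaussian}.
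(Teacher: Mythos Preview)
Your reduction step is exactly the paper's proof: bound $(|B_s|+\varepsilon)^{-\alpha\gamma}\ge(\tfrac32)^{-\alpha\gamma}$ to drop the weight, then apply Kahane's convexity inequality (with the convex function $x\mapsto x^{-1}$) to pass from $h_\varepsilon$ to the scale-invariant field $Y_\varepsilon$. At that point the paper stops and simply cites Lemmas~2.13 and~2.14 of \cite{garban2013liouville}, which already give
\[
\sup_{\varepsilon\in[0,1)}\e{\Bigl(\int_0^{1\wedge\tau}e^{\gamma Y_\varepsilon(B_s)-\frac{\gamma^2}{2}\e{Y_\varepsilon(B_s)^2}}\,ds\Bigr)^{-1}}<\infty.
\]
So the entire ``lower-tail bound'' section of your proposal is unnecessary for this paper: you are sketching a reproof of a result that is treated here as a black box from the existing Liouville Brownian motion literature.

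That said, your sketch is broadly along the right lines and resembles the strategy actually used in \cite{garban2013liouville}: decompose $Y_\varepsilon$ into a coarse continuous piece and a fine piece with short-range correlations, use Brownian occupation geometry to find many well-separated sub-balls visited before $1\wedge\tau$, exploit independence of the fine field over those sub-balls, and set up a scale recursion. As written it is only a sketch (the ``exponential control on how many pairwise well-separated sub-balls the path properly visits'' and the handling of exceptional paths are asserted, not proved), but the architecture is sound. Your parenthetical alternative---invoking general GMC negative-moment bounds once the reference measure satisfies a lower mass-decay condition---is also viable.

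One correction of perspective: your ``Main obstacle'' paragraph overstates the difficulty \emph{for this proposition}. The long-range correlations introduced by the Brownian path, flagged in the introduction, are the genuine obstacle in the H\"older estimates of \cref{subsec:holder} (especially \cref{lem:size.bound.2}), not here. For the negative moment the paper regards the matter as settled by \cite{garban2013liouville} and moves on in three lines.
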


\begin{proof}
	Again, using Kahane's convexity inequality (Lemma 2 of \cite{kahane1985chaos}), it is sufficient to prove this result for the scale invariant field $Y$. We know that $|B_s + \varepsilon| \leq \frac{3}{2}$ for all $t \in (0, \tau)$ and for all $\varepsilon \in [0,1)$, and so we find the bound%]
	\begin{align}
		\nonumber \sup_{\varepsilon\in[0,1)} & \e{\left( \int_0^{1 \wedge \tau} \frac{e^{\gamma Y_\varepsilon(B_s) - \frac{\gamma^2}{2}\e{Y(B_s)^2}}}{\left( |B_s| + \varepsilon \right)^{\alpha\gamma}} \right)^{-1}} \leq \\
			& \hspace{4em} \leq \frac{3}{2}\sup_{\varepsilon\in[0,1)} \e{\left( \int_0^{1 \wedge \tau} e^{\gamma Y_\varepsilon(B_s) - \frac{\gamma^2}{2}\e{Y(B_s)^2}} \right)^{-1}}. 
			\label{eq:frozen}
	\end{align}
	By Lemmas 2.13 and 2.14 of \cite{garban2013liouville}, the right hand side of \cref{eq:frozen} is finite, and so we are done.
\end{proof}

The following corollaries will be useful in \cref{subsec:holder}. 
\begin{corollary}
  For any power $q$, we have a polynomial bound on the probability
	\begin{equation*}d
		\p{\int_0^{1 \wedge \tau} \frac{e^{\gamma h_\varepsilon(B_s) - \frac{\gamma^2}{2}\e{h_\varepsilon(B_s)^2}}}{(|B_s| + \varepsilon)^{\alpha\gamma}}ds \leq r^q} \leq C_{-1} r^q
	\end{equation*}
	for any $r >0$.
	\label{cor:lower.tail}
\end{corollary}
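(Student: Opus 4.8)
This is an immediate consequence of the preceding proposition (the boundedness of the negative moment), via Markov's inequality. Write
\begin{equation*}
	X_\varepsilon = \int_0^{1 \wedge \tau} \frac{e^{\gamma h_\varepsilon(B_s) - \frac{\gamma^2}{2}\e{h_\varepsilon(B_s)^2}}}{(|B_s| + \varepsilon)^{\alpha\gamma}}ds,
\end{equation*}
which is a strictly positive random variable. For $r > 0$ the event $\{X_\varepsilon \leq r^q\}$ coincides with $\{X_\varepsilon^{-1} \geq r^{-q}\}$, so applying Markov's inequality to the nonnegative random variable $X_\varepsilon^{-1}$ and then invoking the uniform bound $\e{X_\varepsilon^{-1}} \leq C_{-1}$ from the previous proposition yields
\begin{equation*}
	\p{X_\varepsilon \leq r^q} = \p{X_\varepsilon^{-1} \geq r^{-q}} \leq r^q\, \e{X_\varepsilon^{-1}} \leq C_{-1}\, r^q,
\end{equation*}
which is exactly the claimed bound, uniformly in $\varepsilon \in [0,1)$.

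There is no genuine obstacle here: the only content is the negative-moment estimate already established, and the step from it to the tail bound is the standard Markov argument. The one point worth a remark is that the constant $C_{-1}$ and the bound are uniform in the regularisation parameter $\varepsilon$, since the negative-moment proposition was stated with a supremum over $\varepsilon \in [0,1)$; this uniformity is what will be needed when passing to the limit $\varepsilon \to 0$ in \cref{subsec:holder}.
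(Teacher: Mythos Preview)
Your proof is correct and is exactly what the paper does: it states that both corollaries are ``simple applications of Markov's inequality,'' and your argument spells out precisely that application using the uniform negative-moment bound $\e{X_\varepsilon^{-1}}\leq C_{-1}$.
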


\begin{corollary}
	There exists some $p>0$ such that, for any $q$, we have a polynomial bound on the probability
	\begin{equation*}
		\p{\int_0^\tau \frac{e^{\gamma h_\varepsilon(B_s) - \frac{\gamma^2}{2}\e{h_\varepsilon(B_s)^2}}}{(|B_s| + \varepsilon)^{\alpha\gamma}}ds \geq r^{-q}} \leq C_{p} r^{pq}
	\end{equation*}
	for any $r > 0$.
	\label{cor:upper.tail}
\end{corollary}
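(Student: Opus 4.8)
The plan is to read the claimed tail bound off \cref{prop:positive} directly, via Markov's inequality applied to a fixed positive moment. Let $p>0$ and the finite constant $C_p$ be those supplied by \cref{prop:positive}, and abbreviate
\begin{equation*}
	X_\varepsilon := \int_0^\tau \frac{e^{\gamma h_\varepsilon(B_s) - \frac{\gamma^2}{2}\e{h_\varepsilon(B_s)^2}}}{(|B_s| + \varepsilon)^{\alpha\gamma}}\, ds,
\end{equation*}
so that $\sup_{\varepsilon\in[0,1)}\e{X_\varepsilon^p}\leq C_p$. The point is that the integrand and the interval of integration in the statement of \cref{cor:upper.tail} are exactly those in \cref{prop:positive}, so no preliminary reduction (no localisation, no change of field via Kahane's inequality) is required here.

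Since $X_\varepsilon\geq 0$ and $x\mapsto x^p$ is strictly increasing on $[0,\infty)$, for every $r>0$ the event $\{X_\varepsilon\geq r^{-q}\}$ coincides with $\{X_\varepsilon^p\geq r^{-pq}\}$. Markov's inequality then gives
\begin{equation*}
	\p{X_\varepsilon\geq r^{-q}} = \p{X_\varepsilon^p\geq r^{-pq}} \leq r^{pq}\,\e{X_\varepsilon^p} \leq C_p\, r^{pq},
\end{equation*}
and this bound is uniform in $\varepsilon\in[0,1)$, which is precisely the assertion. (The same manipulation is valid for every real $q$; it only carries information when $q>0$ and $r$ is small, since otherwise $C_p r^{pq}$ already exceeds $1$.)

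There is essentially no obstacle in this step: all of the genuine work --- the exact-scale-invariance computation, the use of Kahane's convexity inequality to pass to the field $Y$, and the renewal-type recursion $E_n\leq\rho E_{n-1}+\tilde M$ --- has already been carried out inside the proof of \cref{prop:positive}. The only things to keep straight are book-keeping: the exponent $p$ is not free but is the one delivered by \cref{prop:positive}, and the constant is the corresponding $C_p$. An entirely parallel argument --- Markov's inequality applied to the negative moment controlled in the preceding proposition, together with the pointwise lower bound obtained by restricting the integral to $[0,1\wedge\tau]$ --- yields \cref{cor:lower.tail}.
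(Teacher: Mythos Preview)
Your proposal is correct and is exactly the argument the paper has in mind: it states that the corollary is a ``simple application of Markov's inequality'' to the moment bound of \cref{prop:positive}, which is precisely what you have written out.
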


The proof of both of these are simple applications of Markov's inequality.

\subsection{Scaling of a Gaussian Free Field}\label{subsec:scaling}

As well as the polynomial behaviour of the tails of $F_\gamma$ around thick points that we saw in \cref{cor:lower.tail,cor:upper.tail}, we also need a bound on the tail behaviour on the supremum (or infimum) of the harmonic projection of a GFF on a disc of radius $\sqrt r$. 

We were able to use the scale invariant field $Y$ throughout \cref{subs:moments} because the moments we were trying to bound were convex (or concave) functions of a multiplicative chaos measure, and so Kahane's convexity theorem let us change between fields. In \cref{subsec:holder}, however, we need to consider moments of certain integrals of the field, weighted by indicator functions depending on the field itself. Kahane's convexity theorem therefore no longer applies, and we have to work directly with the GFF. So, we need an analogue of the scaling property that the field $Y$ has. 

\begin{lemma}
	Let $\mc D \subset \mb C$ be a bounded proper domain, and let $\tilde{\mc D} \subset \mc D$ be a compactly contained subdomain of $\mc D$. Let $h$ be a zero-boundary condition GFF on $\mc D$. Now, using the Markov property (\cref{prop:markov}), let us write
	\begin{equation*}
		h = h^{har} + \tilde h,
	\end{equation*}
	where $h^{har}$ is the harmonic projection of $h$ onto the disc of radius $2\sqrt r$ and centred at $x \in \tilde{\mc D}$, and $\tilde h$ is an independent, zero-boundary condition GFF on the disc of radius $2\sqrt r$ and centred at $x \in \tilde{\mc D}$. Let
	\begin{equation*}
		\Omega_{\sqrt r}^x = \sup_{z \in B(x, \sqrt r)} h^{har}(z)
	\end{equation*}
	be the supremum of $h^{har}$ on $B(x, \sqrt r)$. Then there exist constants $C, p >0 $ such that, for all $r>0$ small enough,
	\begin{equation*}
		\sup_{x \in \tilde{\mc D}} \mb P \left[ \Omega_{\sqrt r}^x > -\log r \right] \leq C r^p. 
	\end{equation*}
	\label{lem:gff.scaling}
\end{lemma}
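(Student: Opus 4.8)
The plan is to control $\Omega_{\sqrt r}^x = \sup_{z \in B(x,\sqrt r)} h^{har}(z)$ where $h^{har}$ is the harmonic extension into $B(x,2\sqrt r)$ of the restriction of $h$ to the boundary of that disc. The key structural fact is that $h^{har}$ is a harmonic function on the inner disc, so by the maximum principle its supremum over $B(x,\sqrt r)$ is governed by its boundary values on $\partial B(x,\sqrt r)$, and harmonicity gives us strong a priori control (interior gradient estimates, Harnack) reducing the supremum over a whole disc to pointwise information plus a crude modulus-of-continuity bound. So the first step is: fix a deterministic finite net $z_1,\dots,z_N \in B(x,\sqrt r)$ with $N$ polynomial in $1/r$ (in fact a net of a fixed number of points suffices after rescaling, since the geometry of two concentric discs of radii $\sqrt r$ and $2\sqrt r$ is scale-invariant), and write
\begin{equation*}
	\mathbb P\left[ \Omega_{\sqrt r}^x > -\log r \right] \leq \sum_{i=1}^N \mathbb P\left[ h^{har}(z_i) > -\log r - K \right] + (\text{modulus of continuity term}),
\end{equation*}
where $K$ absorbs the fluctuation of the harmonic function between net points; the modulus-of-continuity term can be made negligible because a Gaussian field that is harmonic on a disc has, after scaling, Gaussian oscillation with variance $O(1)$, giving a Gaussian (hence super-polynomially small) tail for that term.

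The second step is to estimate $\mathbb P[h^{har}(z_i) > -\log r - K]$. Here $h^{har}(z_i)$ is a centred Gaussian, so everything comes down to its variance. I would compute $\var(h^{har}(z_i))$ by relating $h^{har}$ to the circle-average process: $h^{har}$ restricted to the centre $x$ is essentially $h_{2\sqrt r}(x)$ up to a bounded harmonic correction, and more generally $\var(h^{har}(z_i)) = \var(h_{2\sqrt r}(x)) + O(1)$ uniformly in $x \in \tilde{\mc D}$ and in the net point $z_i$. By \cref{lem:cov.bound} (with $x=y$, $\varepsilon = \eta = 2\sqrt r$), $\var(h_{2\sqrt r}(x)) = \e{h_{2\sqrt r}(x)^2} = \log\frac{1}{2\sqrt r} + O(1) = \tfrac12 \log\frac1r + O(1)$. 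Hence $h^{har}(z_i)$ is $N(0,\sigma^2)$ with $\sigma^2 = \tfrac12\log\frac1r + O(1)$, and the standard Gaussian tail bound gives
\begin{equation*}
	\mathbb P\left[ h^{har}(z_i) > -\log r - K \right] \leq \exp\!\left( -\frac{(\log\frac1r - K)^2}{2\sigma^2} \right) \leq \exp\!\left( -(1+o(1))\,\frac{(\log\frac1r)^2}{\log\frac1r} \right) = \exp\!\left( -(1+o(1))\log\tfrac1r \right),
\end{equation*}
which is $r^{1+o(1)}$, certainly $\leq C r^p$ for some $p>0$ once $r$ is small. Multiplying by the polynomially-many net points and adding the negligible oscillation term keeps the bound of the form $C r^p$ (shrinking $p$ slightly to eat the polynomial factor $N$), uniformly in $x \in \tilde{\mc D}$.

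The step I expect to be the main obstacle is making the reduction from $\sup_{z \in B(x,\sqrt r)} h^{har}(z)$ to finitely many point evaluations fully rigorous and genuinely uniform in $x$: one must show that the harmonic Gaussian field $h^{har}$, after rescaling $B(x,2\sqrt r)$ to a unit disc, has a law whose oscillation modulus is controlled uniformly in $r$ and $x$. This should follow from the fact that the covariance kernel of $h^{har}$ is the harmonic (Poisson-kernel) extension of the boundary trace of $h$, combined with the near-exact $\log$ form of the GFF covariance from \cref{lem:cov.bound}, which after the scaling becomes an $O(1)$-bounded-variation smooth kernel on the unit disc — so $h^{har}$ rescaled is a Gaussian field with uniformly bounded $C^1$-type estimates, giving sub-Gaussian oscillation with $r$-independent constants. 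Once that uniform oscillation estimate is in hand, the net argument and the Gaussian tail computation above close the proof. An alternative to the explicit net, which sidesteps some of this, is to invoke a Borell--TIS / Dudley-type bound for the supremum of $h^{har}$ over $B(x,\sqrt r)$: the expected supremum is $O(\sqrt r \cdot \text{(something)})$ smaller-order than its marginal standard deviation plus $\mathbb E[h^{har}(x)]=0$, so $\e{\Omega_{\sqrt r}^x} = o(\log\frac1r)$, and concentration of the supremum around its mean with variance proxy $\sigma^2 = \tfrac12\log\frac1r + O(1)$ again yields $\mathbb P[\Omega_{\sqrt r}^x > -\log r] \leq \exp(-(1+o(1))(\log\frac1r)^2/(2\sigma^2)) = r^{1+o(1)}$, uniformly in $x$; this is probably the cleanest route.
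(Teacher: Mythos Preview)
Your proposal is correct and follows essentially the same strategy as the paper: split $\Omega_{\sqrt r}^x$ into the centre value $h^{har}(x)=h_{2\sqrt r}(x)$, which is Gaussian with variance $\tfrac12\log\tfrac1r+O(1)$ and hence has tail $r^{1+o(1)}$ above $-\log r$, plus an oscillation term controlled via the increment bound $\mathbb E[(h^{har}(z)-h^{har}(w))^2]\le K|z-w|$ (so the intrinsic diameter of $B(x,\sqrt r)$ is $O(r^{1/4})$). The only difference is packaging: the paper runs an explicit Kolmogorov-style dyadic chaining on the boundary circle $\partial B(x,\sqrt r)$ to get the oscillation $\le r^{1/4}$ with probability $\ge 1-Cr^{\eta}$, whereas you invoke Borell--TIS/Dudley to the same effect; either way the oscillation is $o(\log\tfrac1r)$ with polynomially high probability and the proof closes.
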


\begin{proof}
	The proof is essentially the same as the proof of Kolmogorov's continuity criterion. Instead of taking limits to obtain almost sure results, however, we obtain quantitative estimates which hold with high probability. First, let us fix $x \in \tilde{\mc D}$. 
	From the proof of Proposition 3.1 in \cite{duplantier2011liouville}, we know that there exists some constant $K_2$ such that, for $z,w \in \mc D$ and $\varepsilon > 0$,
	\begin{equation}
		\mb E \left[ |h_{\varepsilon\sqrt r}(z) - h_{\varepsilon\sqrt r}(w)|^2 \right] \leq K_2 |z - w|.
		\label{eq:var.bound}
	\end{equation}
	Now, recall that $h^{har}$ is harmonic on $B(x, 2\sqrt r)$ and so, using the mean value property of harmonic functions, we see that the circle average regularisation of $h^{har}$ is equal to $h^{har}$ on sets inside $B(x, 2\sqrt r)$, i.e.~$h^{har}_{\varepsilon\sqrt r}(z) = h^{har}(z)$ for $\varepsilon$ small enough and $z \in B(x, \sqrt r)$. Therefore, we know that
	\begin{equation}
		h_{\varepsilon \sqrt r}(z) = h^{har}(z) + \tilde h_{\varepsilon \sqrt r}(z).
		\label{eq:har}
	\end{equation}
	Equation \cref{eq:har} and the independence of $h^{har}$ and $\tilde h$, when combined with \cref{eq:var.bound} shows us that
	\begin{equation}
		\mb E \left[ |h^{har}(z) - h^{har}(w)|^2 \right] \leq \mb E \left[ |h_{\varepsilon\sqrt r}(z) - h_{\varepsilon\sqrt r}(w)|^2 \right]  \leq K_2 |z-w|
		\label{eq:har.var.bound}
	\end{equation}
	for $z,w \in B(x, \sqrt r)$. Because the field $h$ does not depend on our choice of $x$, we can see that the middle term in \cref{eq:har.var.bound} is independent of $x$, and therefore so is the constant $K_2$. 
	
	Because $h^{har}(z) - h^{har}(w)$ is a Gaussian random variable, \cref{eq:har.var.bound} implies that for any $\eta > 0$, there exists a constant $K_\eta$ such that 
	\begin{equation}
		\mb E \left[ |h^{har}(z) - h^{har}(w)|^\eta \right] \leq K_\eta |z-w|^{\eta/2}
		\label{eq:har.bound}
	\end{equation}
	for $z,w \in B(x,\sqrt r)$. Again, $K_\eta$ is independent of $x$ by the independence of $K_2$ from $x$. 

	Because the supremum of a harmonic function on a domain is attained at the boundary of that domain, we need only consider $z \in \partial B(0, \sqrt r)$. Therefore, let us set
	\begin{equation*}
		X_t = h^{har}\left(x + \sqrt r e^{2\pi i t}\right),
	\end{equation*}
	for $t \in [0,1]$. From the inequality in \cref{eq:har.bound}, we deduce that for $s,t\in[0,1]$
	\begin{align*}
		\mb E\left[ |X_s - X_t|^\eta\right] & \leq K_\eta| \sqrt r\left( e^{2\pi i s} - e^{2\pi i t} \right) | ^{\eta/2} \\
		& \leq \tilde K_\eta r^{\eta/2} |s-t|^{\eta/2}.
	\end{align*}
	The bound on the covariance structure of $X$ clearly does not depend on the choice of $x$. 

	Now that we have a bound on the $\eta$-moment of the increments, we can use Markov's inequality to say things about the probability that the process $X$ is irregular. So, let $D_n = \left\{ k2^{-n} \; : \; k = 0, 1, \dots 2^n \right\}$ be the set of dyadic points in the unit interval at level $n$. For some power $p$, to be chosen later, we have
	\begin{align*}
		\mb P\left[ |X_{k2^{-n}} - X_{(k+1)2^{-n}}| > 2^{-np} r^{1 / 4} \right] &\leq 2^{np\eta} r^{-\eta /4}  \mb E\left[ |X_{k2^{-n}} - X_{(k+1)2^{-n}}|^\eta \right] \\
		&\leq \tilde K_\eta r^{\eta /4} 2^{np\eta} |k2^{-n} - (k+1)2^{-n}|^{\eta/2}\\
		&\leq \tilde K_\eta r^{\eta /4} 2^{-n(\frac{1}{2}\eta - p\eta)},
	\end{align*}
	for $k = 0, 1, \dots, 2^n -1$. Therefore, a simple union bound shows that
	\begin{equation*}
		\mb P \left[ \sup_k |X_{k2^{-n}} - X_{(k+1)2^{-n}}| > 2^{-np}r^{1/4}  \right] \leq \tilde K_\eta r^{\eta/4} 2^{-n(\frac{1}{2}\eta - p\eta - 1)}.
	\end{equation*}
	If we choose $0 < p < \frac{1}{2}$ and $\eta$ sufficiently large, we find that $q := \frac{1}{2}\eta - p\eta - 1 >0$. Because we have ensured that $q >0$, we can again use the union bound to find that
	\begin{align*}
		\mb P \left[ \sup_{n \geq 0} \sup_k |X_{k2^{-n}} - X_{(k+1)2^{-n}}| > 2^{-np}r^{1/4}  \right] &\leq \tilde K_\eta r^{\eta/4} \sum_{n \geq 0} 2^{-nq}\\
		& = \tilde K_\eta r^{\eta/4} \left( \frac{1}{1-2^{-q}} \right)\\
		& = \overline K_\eta r^{\eta/4}.
	\end{align*}
	So, we see that the event 
	\begin{equation*}
		A := \left\{ X_t \text{ is $p$-H\"older continuous with constant $r^{1/4}$} \right\}
	\end{equation*}
	occurs with probability greater than $1 - \overline K_\eta r^{\eta/2}$. On that event we can see that 
	\begin{equation}
		|\sup_{t}X_t - \inf_t X_t| \leq r^{1/4}.
		\label{eq:sup.inf}
	\end{equation}

	Using \cref{eq:sup.inf}, we can find a bound for $\Omega_{\sqrt r}^x$ in terms of objects we have good control over. Specifically, we have
	\begin{equation*}
		\Omega_{\sqrt r}^x \leq |\sup_{t}X_t - \inf_t X_t| + |\overline X|, 
	\end{equation*}
	where $\overline X$ is the mean value of the process $X_t$. Let us consider that second term. Since $X$ is really just $h^{har}$ on a circle, and $h^{har}$ is harmonic, we can use the mean value theorem to see that $\overline X = h^{har}(x)$. But again, we can apply the mean value theorem to see that $h^{har}(x)$ is really just the average of $h$ on $\partial B(x, 2\sqrt r)$, i.e.~$h^{har}(x) = h_{2\sqrt r}(x)$. So, we have the inequality
	\begin{equation*}
		\mb P \left[ \Omega_{\sqrt r} \geq - \log r \right] \leq \mb P \left[ |\sup_{t}X_t - \inf_t X_t| \geq -\frac{1}{2}\log r \right] + \mb P \left[ h_{2\sqrt r}(x) \geq - \frac{1}{2}\log r \right].
	\end{equation*}
	Because $h_{2\sqrt r}(x) \sim N(0, -\log 2\sqrt r + \log C(x, \mc D))$, we know that the second term on the right hand side decays polynomially in $r$ as $r \to 0$. Furthermore, since the conformal radius, $C(x, \mc D)$, is bounded for $x \in \tilde{\mc D}$, the coefficients we choose in the polynomial bound can be chosen to hold uniformly for all $x \in \tilde{\mc D}$. 
	
	So now let us consider the first term. 
	\begin{align*}
		\mb P \Bigg[ |\sup_{t}X_t - \inf_t X_t| \geq -\frac{1}{2}\log r \Bigg] &=
		\mb P \left[ \left\{ |\sup_{t}X_t - \inf_t X_t| \geq -\frac{1}{2}\log r \right\} \cap A \right] \\
		&+ P \left[ \left\{ |\sup_{t}X_t - \inf_t X_t| \geq -\frac{1}{2}\log r \right\} \cap A^c \right]\\
		&\leq \mb P \left[ r^{1/4} \geq -\frac{1}{2}\log r \right] + \mb P\left[ A^c \right]\\
		&\leq 0 + \overline K_\eta r^{\eta/2},
	\end{align*}
	for $r$ small enough. Since $\overline K_\eta$ does not depend on $x \in \tilde{\mc D}$, we have the desired result.  
\end{proof}

\subsection{H\"older-like properties of $F_\gamma$}\label{subsec:holder}

We will now show the required regularity properties of the time change function $F_\gamma$. It will be convenient to introduce the following measures.

\begin{definition}
	Let $\mu_\gamma$ be the measure on the interval $[0,\tau]$ defined by $\mu_\gamma = \mc L \circ F_\gamma^{-1}$, where $\mc L$ is Lebesgue measure on the interval $[0, F_\gamma(\tau)]$. In other words, for $s,t \in [0, \tau]$ with $s \leq t$, we set
	\begin{equation*}
		\mu_\gamma([s,t]) = F_\gamma(t) - F_\gamma(s).
	\end{equation*}
	Define the measure $\mu_\alpha$ in a similar way, for $\alpha \in [0,2)$.%] 
\end{definition}

\begin{remark}
	To get some intuition behind the next few results, let us think of the measures $\mu_\alpha$ and $\mu_\gamma$ as probability measures for a moment. Then, if we sample a time $t \in [0,\tau]$ according to $\mu_\alpha$, it will almost surely be such that the Brownian motion $B$ is in an $\alpha$-thick point, \ie $t \in T_\alpha$ almost surely, because the $\alpha$-LMB, $Z^\alpha,$ spends Lebesgue-almost all of its time in $\alpha$-thick points. Similar statements hold if we sample a time from $\mu_\gamma$. 
\end{remark}

\begin{proposition}
	For all $\alpha \in [0,2)$, and $\gamma \in [0,2)$, fix $\delta > 0$, and let $\beta = 1 - \frac{\alpha\gamma}{2} + \frac{\gamma^2}{4}$. Define the set of times%]]
	\begin{equation*}
		L^N_\gamma = \left\{ t\in [0,\tau] \, : \, \mu_\gamma([(t, (t+r) \wedge \tau]) \geq r^{\beta+\delta} \quad \forall r \in [0,2^{-N})  \right\}. 
	\end{equation*}
	Then for all $\Delta >0$, which may be random and may depend on $\mu_\alpha([0,\tau])$, there exists some random but almost surely finite $N \in \mb N$ such that
	\begin{equation*}
		\mu_\alpha\left( L^N_\gamma \right) \geq \mu_\alpha\left( [0,\tau] \right) - \Delta.
	\end{equation*}
	\label{prop:lower}
\end{proposition}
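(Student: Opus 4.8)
The plan is to show that the complement of $L^N_\gamma$ inside $[0,\tau]$ carries very little $\mu_\alpha$-mass once $N$ is large. Fix $\delta>0$ and $\beta = 1-\tfrac{\alpha\gamma}{2}+\tfrac{\gamma^2}{4}$. A time $t$ fails to be in $L^N_\gamma$ precisely when there exists $r\in[0,2^{-N})$ with $\mu_\gamma((t,(t+r)\wedge\tau])<r^{\beta+\delta}$, i.e.\ the clock $F_\gamma$ \emph{barely moves} near $t$. I would dyadically decompose the possible scales: set $r_k=2^{-k}$ for $k\ge N$, and note that if $t\notin L^N_\gamma$ then for some $k\ge N$ there is $r\in[r_{k+1},r_k)$ with $\mu_\gamma((t,(t+r)\wedge\tau])< r^{\beta+\delta}\le r_k^{\beta+\delta}$, which (by monotonicity of $F_\gamma$) forces $\mu_\gamma((t,(t+r_{k+1})\wedge\tau])<r_k^{\beta+\delta}=2^{\beta+\delta}r_{k+1}^{\beta+\delta}$. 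So $[0,\tau]\setminus L^N_\gamma$ is covered by the union over $k\ge N$ of the sets $G_k=\{t: \mu_\gamma((t,(t+r_{k+1})\wedge\tau])< 2^{\beta+\delta}r_{k+1}^{\beta+\delta}\}$, and it suffices to bound $\e{\mu_\alpha(G_k)}$ by a summable sequence.

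To estimate $\e{\mu_\alpha(G_k)}$ I would cover $[0,\tau]$ by the dyadic intervals $I_j^{(k)}=[j2^{-(k+1)},(j+1)2^{-(k+1)}]$; each $t\in G_k$ lies in some $I_j^{(k)}$, and then the interval $(t,(t+r_{k+1})\wedge\tau]$ contains at least one full ``next'' dyadic subinterval $I_{j+1}^{(k)}$ (or we are within one step of $\tau$, a negligible correction). Hence $G_k\cap I_j^{(k)}$ is nonempty only if $\mu_\gamma(I_{j+1}^{(k)})<C2^{-(k+1)(\beta+\delta)}$ for a fixed constant $C$, and in that case we crudely bound $\mu_\alpha(G_k\cap I_j^{(k)})\le\mu_\alpha(I_j^{(k)})$. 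Taking expectations and using independence of $B$ from $h$ together with the Markov/translation structure, $\e{\mu_\alpha(I_j^{(k)})\ind_{\{\mu_\gamma(I_{j+1}^{(k)}) \text{ small}\}}}$ should be controlled by a product-type bound: the ``$\mu_\gamma$ small'' event is exactly the lower-tail event for the clock increment over a dyadic interval, which by Corollary~\ref{cor:lower.tail} has probability $\lesssim (2^{-(k+1)(\beta+\delta)})^{q}$ for any $q$, after rescaling that interval to unit size (scaling time by $2^{k+1}$ and using the $\gamma$-clock scaling $F_\gamma$ picks up a factor $2^{-(k+1)(1-\frac{\alpha\gamma}{2}+\frac{\gamma^2}{4})}e^{\gamma\Omega}$ as in \cref{eq:brownie}). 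Meanwhile $\e{\mu_\alpha(I_j^{(k)})}\lesssim 2^{-(k+1)\beta_\alpha}$ with $\beta_\alpha=1-\frac{\alpha^2}{2}+\frac{\alpha^2}{4}$ by the analogous moment bound for the $\alpha$-clock. To decouple the two events I would either invoke a correlation inequality (FKG for the clock, since lower-tail events for $F_\gamma$ and small-mass events for $F_\gamma$, $F_\gamma$ are both decreasing in $h$), or, more robustly, use the scaling Lemma~\ref{lem:gff.scaling}: condition on the harmonic projection $h^{har}$ onto $B(B_t,2\sqrt{r})$, bound its supremum contribution on the event from \cref{lem:gff.scaling} (probability $\ge 1-Cr^p$), and on that event replace $h$ by the independent interior field $\tilde h$, making the $\mu_\alpha$ and $\mu_\gamma$ increments over the small interval exactly the rescaled scale-invariant quantities from \cref{subs:moments}, now genuinely independent of the ``outer'' contribution. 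Summing over the $\sim 2^{k+1}\tau$ intervals $j$ gives $\e{\mu_\alpha(G_k)}\lesssim 2^{k}\cdot 2^{-(k+1)\beta_\alpha}\cdot 2^{-(k+1)(\beta+\delta)q}$, and by taking $q$ large enough (recall Corollary~\ref{cor:lower.tail} holds for every $q$) this is $\lesssim 2^{-k\theta}$ for some $\theta>0$, hence $\sum_{k\ge N}\e{\mu_\alpha(G_k)}\to 0$ as $N\to\infty$.

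Finally, to get the statement as worded — for every (possibly random) $\Delta>0$ there is an a.s.\ finite $N$ with $\mu_\alpha(L^N_\gamma)\ge\mu_\alpha([0,\tau])-\Delta$ — I would argue as follows. By the summability just established, $\e{\mu_\alpha([0,\tau]\setminus L^N_\gamma)}\to 0$, and since $N\mapsto L^N_\gamma$ is increasing, $\mu_\alpha([0,\tau]\setminus L^N_\gamma)\downarrow \mu_\alpha([0,\tau]\setminus\bigcup_N L^N_\gamma)$, which has zero expectation and hence is a.s.\ zero. Thus $\mu_\alpha(L^N_\gamma)\uparrow\mu_\alpha([0,\tau])$ a.s., so for a.e.\ realisation there is a finite (random) $N$ beyond which $\mu_\alpha([0,\tau]\setminus L^N_\gamma)<\Delta$; this $N$ may depend on $\Delta$ and on $\mu_\alpha([0,\tau])$ exactly as allowed. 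The main obstacle I anticipate is the decoupling step: the indicator $\ind_{\{\mu_\gamma(I_{j+1}^{(k)})\text{ small}\}}$ depends on the GFF over the same region that governs $\mu_\alpha(I_j^{(k)})$, and the long-range correlations of the GFF (further entangled by the Brownian path visiting nearby points at many times) mean one cannot naively multiply probabilities; handling this cleanly is precisely why \cref{lem:gff.scaling} was proved, and making the conditioning argument rigorous — controlling the error from the harmonic part uniformly in the (random) location $B_t$ and across all dyadic $j$ simultaneously — is the delicate part of the proof.
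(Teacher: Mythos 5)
Your reduction of the proposition to a single-scale estimate of the form $\e{\mu_\alpha(\{t:\mu_\gamma([t,(t+r)\wedge\tau])<Er^{\beta+\delta}\})}\leq Dr^{q}$, followed by Markov/Borel--Cantelli over dyadic scales and interpolation between consecutive dyadic radii, is exactly the structure of the paper's argument (this is \cref{lem:size.bound} plus the proof of \cref{prop:lower}). The endgame is fine. The gap is in your proof of the single-scale estimate itself, and it is not merely a ``delicate'' technicality: the decoupling you propose is the wrong move and the resulting bound fails.

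The point is that the event $\{\mu_\gamma([t,t+r])<r^{\beta+\delta}\}$ is \emph{not rare} for a typical $t$. At a Lebesgue-typical time the $\gamma$-clock increment scales like $r^{1+\gamma^2/4}$, whereas $\beta+\delta=1-\frac{\alpha\gamma}{2}+\frac{\gamma^2}{4}+\delta$ is strictly smaller than $1+\frac{\gamma^2}{4}$ once $\delta<\frac{\alpha\gamma}{2}$; so for most $t$ the increment really is below $r^{\beta+\delta}$, and the unweighted probability of your ``$\mu_\gamma$ small'' event tends to $1$, not $0$. Consequently any bound of product type, $\e{\mu_\alpha(I_j)}\cdot\p{\mu_\gamma(I_{j+1})\text{ small}}$ --- whether obtained by FKG or by conditioning on the harmonic projection --- gives roughly $\sum_j\e{\mu_\alpha(I_j)}=\e{\tau}$, which is not summable in $k$. (Your appeal to \cref{cor:lower.tail} here is a category error: that corollary controls the lower tail of the integral carrying the singular weight $(|B_s|+\varepsilon)^{-\alpha\gamma}$, not of the plain clock increment.) The entire content of the estimate is the \emph{correlation} between the $\mu_\alpha$-weight at $t$ and the $\gamma$-clock on $[t,t+r]$: the paper handles it by Girsanov's theorem, writing $\e{\ind_{\{\cdots\}}e^{\alpha h_{\varepsilon'}(B_t)-\frac{\alpha^2}{2}\e{h_{\varepsilon'}(B_t)^2}}}$ as the probability of the same event for the field shifted by $\alpha\,\e{h_{\varepsilon\sqrt r}(B_s)h_{\varepsilon'}(B_t)}\approx\alpha\log\frac{1}{|B_s-B_t|}$. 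It is this shift that inserts the factor $(|B_s-B_t|+\varepsilon\sqrt r)^{-\alpha\gamma}$ into the integrand, produces the exponent $\beta$ rather than $1+\frac{\gamma^2}{4}$ after Brownian and field rescaling, and makes the lower-tail event one to which \cref{cor:lower.tail} genuinely applies. The harmonic-projection step and \cref{lem:gff.scaling} are then used \emph{after} the Girsanov shift, to peel off the field at scale $\sqrt r$ and reduce to a unit-scale field; they are not a substitute for the tilt. A secondary slip: an interval $(t,t+2^{-(k+1)}]$ with $t\in I_j^{(k)}$ need not contain the full dyadic interval $I_{j+1}^{(k)}$ of the same length (only one of length $2^{-(k+2)}$), and $\e{\mu_\alpha(I_j^{(k)})}$ equals $|I_j^{(k)}|$ up to boundary effects rather than $2^{-(k+1)(1-\alpha^2/4)}$; both are fixable, but neither rescues the decoupling.
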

This proposition is essentially saying that if $t$ is an $\alpha$-thick point, then the $\mu_\gamma$ mass of an interval of length $r$, starting at $t$, decays more slowly than $r^{\beta+\delta}$. It is \emph{almost} like saying that around $\alpha$-thick points, the map function $F_\gamma^{-1}$ is $\frac{1}{\beta+\delta}$-H\"older continuous.

The proof will rely on the following lemma:

\begin{lemma}
	As before, fix $\delta>0$ and let $\beta = 1 - \frac{\alpha\gamma}{2} + \frac{\gamma^2}{4}$, and let $E>0$ be some positive constant. Then there exist two constants $D>0$ and $q>0$ such that
	\begin{equation*}
		\e{\mu_\alpha\left( \left\{ t\in[0,\tau] \, : \, \mu_\gamma([t, (t+r) \wedge \tau]) < E r^{\beta+\delta} \right\} \right) } \leq D r^{q}.
	\end{equation*}
	for all $r \in (0,1)$. 
	\label{lem:size.bound}
\end{lemma}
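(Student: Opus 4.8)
The plan is to bound the expected $\mu_\alpha$-mass of the "bad" set by switching the order of expectation and integration and working point-by-point. Using the definition $\mu_\alpha = \mathcal L \circ F_\alpha^{-1}$ and $\mu_\gamma([t,(t+r)\wedge\tau]) = F_\gamma((t+r)\wedge\tau) - F_\gamma(t)$, I would first write
\begin{equation*}
	\e{\mu_\alpha\left( \left\{ t : F_\gamma((t+r)\wedge\tau) - F_\gamma(t) < E r^{\beta+\delta} \right\} \right)} = \e{\int_0^\tau \ind\left\{ F_\gamma((t+r)\wedge\tau) - F_\gamma(t) < E r^{\beta+\delta} \right\} \, \mu_\alpha(dt)},
\end{equation*}
and then, writing $\mu_\alpha(dt) = e^{\alpha h(B_t) - \frac{\alpha^2}{2}\e{h(B_t)^2}}\,dt$ (in the regularised sense), bound this by
\begin{equation*}
	\int_0^\tau \e{ e^{\alpha h(B_t) - \frac{\alpha^2}{2}\e{h(B_t)^2}} \, \ind\left\{ F_\gamma((t+r)\wedge\tau) - F_\gamma(t) < E r^{\beta+\delta} \right\} } \, dt,
\end{equation*}
using Fubini and the fact that everything is nonnegative. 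The integrand is an expectation over both $B$ and $h$ of a weight times an indicator depending on the field — exactly the situation flagged in \cref{subsec:scaling} where Kahane's inequality fails and one must work with the GFF directly.

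**Localising around $B_t$ and using the Markov property.** For a fixed $t$, condition on the Brownian path up to time $t$ and consider the increment of $F_\gamma$ over $[t, (t+r)\wedge\tau]$. The key geometric observation is that with overwhelming probability the Brownian motion stays within distance $\sqrt r$ of $B_t$ on the time interval $[t, t+r]$ (Gaussian tail bound on the modulus of continuity), so up to an event of polynomially small probability we may restrict attention to the disc $B(B_t, \sqrt r)$. On that disc, apply the domain Markov property (\cref{prop:markov}) to split $h = h^{har} + \tilde h$, where $\tilde h$ is a zero-boundary GFF on $B(B_t, 2\sqrt r)$. Now I want to lower-bound $F_\gamma((t+r)\wedge\tau) - F_\gamma(t)$: on the good event, this increment is at least $e^{\gamma \inf_{B(B_t,\sqrt r)} h^{har}}$ times an integral of $e^{\gamma \tilde h}$ against an independent Brownian occupation measure over a disc of radius $\sqrt r$. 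By Brownian scaling and the exact scale invariance of $\tilde h$ — or rather its analogue via \cref{lem:gff.scaling}, since $\tilde h$ is only an ordinary GFF — the $\tilde h$-contribution is of order $r^{\beta}$ times a random factor (a chaos integral over a unit-size region, times the scaling prefactor $r^{\gamma^2/4}$ from the Gaussian shift and $r^{1}$ from the time-scaling, giving the exponent $\beta = 1 - \tfrac{\alpha\gamma}{2} + \tfrac{\gamma^2}{4}$ after accounting for the $|B_s|^{-\alpha\gamma}$-type weight built into the thick-point picture). The negative-moment bound (\cref{cor:lower.tail}) controls the lower tail of this unit-scale chaos integral, while \cref{lem:gff.scaling} controls $\Omega^x_{\sqrt r} = \sup h^{har}$ and hence $\inf h^{har} = -\Omega^x_{\sqrt r}$ after a sign flip.

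**Combining the tail bounds.** Putting these together: the event $\{F_\gamma((t+r)\wedge\tau) - F_\gamma(t) < E r^{\beta+\delta}\}$ forces either (i) the Brownian motion leaves $B(B_t,\sqrt r)$ early — polynomially small probability; or (ii) $\Omega^x_{\sqrt r}$ is large, say $> -\log r$ — polynomially small by \cref{lem:gff.scaling}; or (iii) the unit-scale chaos integral is smaller than a constant times $r^{\delta}$ (the leftover power after the $r^\beta$ scaling is divided out) — polynomially small by \cref{cor:lower.tail}. The extra $\delta$ in the exponent is exactly what converts "small chaos integral" into a polynomial gain in $r$. Finally I must carry along the weight $e^{\alpha h(B_t) - \frac{\alpha^2}{2}\e{h(B_t)^2}}$: since this weight has all negative moments and finite small positive moments, I would use Hölder's inequality to split the expectation of (weight)$\times$(indicator) into a moment of the weight times a small power of the probability of the bad event, both controlled uniformly in $t$ (using that $B_t$ stays in a compact subdomain before $\tau$, so the constants in \cref{lem:cov.bound}, \cref{lem:gff.scaling} are uniform). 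Integrating over $t \in [0,\tau]$ — a bounded interval in expectation — then yields the bound $D r^q$ with $q>0$ depending on $\delta$ and the chosen moment exponents.

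**Main obstacle.** The hard part will be making the scaling argument of the second paragraph rigorous: unlike the field $Y$, the GFF does not enjoy exact stochastic scale invariance, so I cannot simply invoke \cref{lem:scale.invar} to extract the $r^\beta$ factor cleanly. Instead I must show that on the disc $B(B_t,\sqrt r)$ the restricted GFF is, up to the harmonic-projection term handled by \cref{lem:gff.scaling} and up to multiplicative errors absorbed into constants (via \cref{lem:cov.bound} comparing its covariance to $\log\frac{1}{|x-y|}$), close enough to a rescaled GFF that the chaos integral over it behaves like $r^{\gamma^2/4}$ times a unit-scale chaos integral whose lower tail is governed by \cref{cor:lower.tail}. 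Interchanging this rescaling with the independent Brownian occupation measure (which scales by $r$ in time, $\sqrt r$ in space) and keeping track of all the exponents so that they assemble into precisely $\beta$, while ensuring every error term is polynomially small and uniform in $t$, is the delicate technical core of the proof.
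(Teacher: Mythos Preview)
Your overall architecture --- localise to a ball of radius $\sqrt r$ around $B_t$, use the Markov decomposition $h = h^{har} + \tilde h$, rescale, control the harmonic part by \cref{lem:gff.scaling} and the chaos part by \cref{cor:lower.tail} --- matches the paper closely. But there is one genuine gap: the way you propose to handle the $\mu_\alpha$-weight.

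You want to ``carry along the weight $e^{\alpha h(B_t) - \frac{\alpha^2}{2}\e{h(B_t)^2}}$'' by H\"older's inequality, separating it from the indicator. This fails for two reasons. First, the regularised weight $e^{\alpha h_{\varepsilon'}(B_t) - \frac{\alpha^2}{2}\e{h_{\varepsilon'}(B_t)^2}}$ has $p$-th moment $e^{\frac{\alpha^2}{2}p(p-1)\e{h_{\varepsilon'}(B_t)^2}}$, which blows up as $\varepsilon' \to 0$ for any $p>1$; so H\"older with $p>1$ on the weight gives no uniform bound. Second, and more fundamentally, if you decouple the weight from the indicator you can never recover the $-\tfrac{\alpha\gamma}{2}$ term in $\beta$. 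You write that the exponent $\beta = 1 - \tfrac{\alpha\gamma}{2} + \tfrac{\gamma^2}{4}$ arises ``after accounting for the $|B_s|^{-\alpha\gamma}$-type weight built into the thick-point picture'', but in your setup that singularity is nowhere: the integrand of $F_\gamma$ has no such factor, and the $\mu_\alpha$-weight sits outside the integral. Time-scaling gives the $1$ and the field variance gives the $\tfrac{\gamma^2}{4}$; with H\"older you would assemble $1 + \tfrac{\gamma^2}{4}$, not $\beta$.

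The paper's mechanism is Girsanov (Cameron--Martin for the Gaussian field): taking the $h$-expectation of the weight times the indicator \emph{shifts} the field inside the indicator by $\alpha\,\mb E_h[h_{\varepsilon\sqrt r}(B_s)h_{\varepsilon'}(B_t)]$. By \cref{lem:cov.bound} this shift is, up to constants, $-\alpha\log(|B_s - B_t| + \varepsilon\sqrt r)$, which after exponentiation by $\gamma$ produces exactly the factor $(|B_s - B_t| + \varepsilon\sqrt r)^{-\alpha\gamma}$ inside the time integral. Under the Brownian/space rescaling this contributes $r^{-\alpha\gamma/2}$, and that is where the missing term in $\beta$ comes from. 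Once this is in place, the rest of your outline (Markov split, \cref{lem:gff.scaling}, \cref{cor:lower.tail}, then Fatou/portmanteau to pass to the limit in $\varepsilon'$ and $\varepsilon$) goes through as you describe.
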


\begin{proof}
	To ease notation, we will prove the case $E = 1$. The reader will be able to see that the same argument works for any positive $E$, with possibly different constants $D$ and $q$. 
	
	Let $r \geq 0$ and fix $\varepsilon, \varepsilon' >0$ so that $\varepsilon' < \varepsilon\sqrt r$. Then by Girsanov's change of measure theorem, we get
	\begin{align}
		\nonumber &\mb E_B \mb E_h \Big [ \ind_{ \big\{ \int_{t}^{(t+r) \wedge \tau} e^{\gamma h_{\varepsilon\sqrt r}(B_s) - \frac{\gamma^2}{2}  \mb E [ h_{\varepsilon\sqrt r}(B_s)^2 ]}ds < r^{\beta+\delta} \big\}}  e^{\alpha h_{\varepsilon '}(B_t) - \frac{\alpha^2}{2} \e{h_{\varepsilon'}(B_t)^2}} \ind_{ \left\{ \tau > t \right\} } \Big ] = \\[1em]
		\nonumber &= \mb E_B \left[ \ind_{ \left\{ \tau > t \right\} }  \mb P_h\left[ \int_{t}^{(t+r)\wedge \tau} e^{\gamma\left(h_{\varepsilon\sqrt r}(B_s) + \alpha \mb E_h \left[ h_{\varepsilon\sqrt r}(B_s)h_{\varepsilon'}(B_t)\right]\right) - \frac{\gamma^2}{2}\e{h_{\varepsilon\sqrt r}(B_s)^2}} ds < r^{\beta + \delta} \right] \right] \\
		&= \mb E_B \left[ \ind_{ \left\{ \tau > t \right\} } \mb E_B \left[ \mb P_h\left[ \int_{t}^{(t+r)\wedge \tau} e^{\gamma\left(h_{\varepsilon\sqrt r}(B_s) + \alpha \mb E_h \left[h_{\varepsilon\sqrt r}(B_s)h_{\varepsilon'}(B_t)\right]\right) - \frac{\gamma^2}{2}\e{h_{\varepsilon\sqrt r}(B_s)^2}} ds < r^{\beta + \delta} \right] \bigg| \mc F_t \right] \right]
		\label{eq:girsanov}
	\end{align}
	where $\mc F_t = \sigma(B_s; \; s \leq t)$ is the natural filtration for $B$. Now, using \cref{lem:cov.bound}, we know that almost surely on the event $s,t < \tau$, there is a constant $C$ such that 
	\begin{equation*}
		\mb E_h \left[ h_{\varepsilon\sqrt r}(B_s) h_{\varepsilon'}(B_t) \right] \geq \log \frac{1}{|B_s - B_t| + \varepsilon\sqrt r} - C,
	\end{equation*}
	and so we can bound the integral in \cref{eq:girsanov} from below by 
	\begin{align}
		\nonumber\int_{t}^{(t+r)\wedge \tau} & e^{\gamma\left(h_{\varepsilon\sqrt r}(B_s) + \alpha \mb E_h \left[h_{\varepsilon\sqrt r}(B_s)h_{\varepsilon'}(B_t)\right]\right) - \frac{\gamma^2}{2}\e{h_{\varepsilon\sqrt r}(B_s)^2}} ds \geq \\
		&\hspace{6em} \geq e^{-\alpha\gamma C}\int_{t}^{(t+r)\wedge \tau} \frac{ e^{\gamma h_{\varepsilon\sqrt r}(B_s) - \frac{\gamma^2}{2}\e{h_{\varepsilon\sqrt r}(B_s)^2}}}{(|B_s - B_t| + \varepsilon \sqrt r)^{\alpha\gamma}} ds
		\label{eq:integral.bound}
	\end{align}
	Now, by changing variables and using the scaling properties of Brownian motion we see that the right hand side of \cref{eq:integral.bound} becomes
	\begin{align*}
		\nonumber \int_{t}^{(t+r)\wedge \tau} \frac{ e^{\gamma h_{\varepsilon\sqrt r}(B_s) - \frac{\gamma^2}{2}\e{h_{\varepsilon\sqrt r}(B_s)^2}}}{(|B_s - B_t| + \varepsilon \sqrt r)^{\alpha\gamma}} ds 
		&=\int_{0}^{r\wedge (\tau-t)} \frac{ e^{\gamma h_{\varepsilon\sqrt r}(B_{t+s}) - \frac{\gamma^2}{2}\e{h_{\varepsilon\sqrt r}(B_{t+s})^2}}}{(|B_{t+s} - B_t| + \varepsilon \sqrt r)^{\alpha\gamma}} ds \\
		&= r\int_{0}^{1\wedge ( (\tau - t)/r)} \frac{ e^{\gamma h_{\varepsilon\sqrt r}(B_{t + ru}) - \frac{\gamma^2}{2}\e{h_{\varepsilon\sqrt r}(B_{t + ru})^2}}}{(|B_{t + ru} - B_t| + \varepsilon \sqrt r)^{\alpha\gamma}} du \\
		\nonumber&\overset{d}{=} r\int_{0}^{1\wedge \tau'} \frac{ e^{\gamma h_{\varepsilon\sqrt r}(\sqrt r \tilde B_{u} + B_t) - \frac{\gamma^2}{2}\e{h_{\varepsilon\sqrt r}(\sqrt r \tilde B_{u} + B_t)^2}}}{(|\sqrt r \tilde B_{u}| + \varepsilon \sqrt r)^{\alpha\gamma}} du\\
		&= r^{1 - \frac{\alpha\gamma}{2}} \int_{0}^{1 \wedge \tau'} \frac{ e^{\gamma h_{\varepsilon\sqrt r}(\sqrt r \tilde B_{u} + B_t) - \frac{\gamma^2}{2}\e{h_{\varepsilon\sqrt r}(\sqrt r \tilde B_{u} + B_t)^2}}}{(| \tilde B_{u}| + \varepsilon)^{\alpha\gamma}} du,
	\end{align*}
	where $\tilde B$ is an independent Brownian motion started at the origin, and 
	\begin{equation*}
		\tau' = \inf \left\{ u>0 : |\sqrt r \tilde B_u + B_t| = \frac{1}{2}  \right\} 
	\end{equation*}
	is the first time $\sqrt r \tilde B_u + B_t$ exits the disc of radius $1$. The equality in distribution holds $\mb P_h$-almost surely. In order to use the scaling property of the GFF $h$, (\cref{lem:gff.scaling}), we also need to make sure that $\tilde B$ stays bounded. So, let
	\begin{equation*}
		\tilde \tau = \tau' \wedge \inf\left\{ u > 0 : |\tilde B_u| = \frac{1}{2} \right\}.
	\end{equation*}
	Then we certainly know that 
	\begin{align}
	\nonumber r^{1 - \frac{\alpha\gamma}{2}} \int_{0}^{1 \wedge \tau'} & \frac{ e^{\gamma h_{\varepsilon\sqrt r}(\sqrt r \tilde B_{u} + B_t) - \frac{\gamma^2}{2}\e{h_{\varepsilon\sqrt r}(\sqrt r \tilde B_{u} + B_t)^2}}}{(| \tilde B_{u}| + \varepsilon)^{\alpha\gamma}} du	\geq \\
	&\geq r^{1 - \frac{\alpha\gamma}{2}} \int_{0}^{1 \wedge \tilde\tau} \frac{ e^{\gamma h_{\varepsilon\sqrt r}(\sqrt r \tilde B_{u} + B_t) - \frac{\gamma^2}{2}\e{h_{\varepsilon\sqrt r}(\sqrt r \tilde B_{u} + B_t)^2}}}{(| \tilde B_{u}| + \varepsilon)^{\alpha\gamma}} du.
		\label{eq:bm.scaling}
	\end{align}
	
	Now let us use the fact that we are conditioning on $\mc F_t$ and the Markov property of $h$ to write $h = h^{har} + \tilde h$, where $h^{har}$ is the harmonic projection of $h$ onto the disc of radius $2\sqrt r$, centred at $B_t$, and $\tilde h$ has the law of a zero-boundary GFF on the disc of radius $2\sqrt r$, centred at $B_t$. If we write $\Omega_{\sqrt r} = \inf_{z \in B(B_t, \sqrt r)} h^{har}(z)$, we know that $h \geq \Omega_{\sqrt r} + \tilde h$ inside the $B(B_t, \sqrt r)$, and so we can continue from \cref{eq:bm.scaling} to see that 
	\begin{align}
		\nonumber r^{1 - \frac{\alpha\gamma}{2}} & \int_{0}^{1 \wedge \tilde \tau} \frac{ e^{\gamma h_{\varepsilon\sqrt r}(\sqrt r \tilde B_{u} + B_t) - \frac{\gamma^2}{2}\e{h_{\varepsilon\sqrt r}(\sqrt r \tilde B_{u} + B_t )^2}}}{(| \tilde B_{u}| + \varepsilon)^{\alpha\gamma}} du \geq \\
		\nonumber &\geq r^{1 - \frac{\alpha\gamma}{2}} \int_{0}^{1 \wedge \tilde \tau} \frac{ e^{\gamma h_{\varepsilon\sqrt r}(\sqrt r \tilde B_{u} + B_t) - \frac{\gamma^2}{2}\left( - \log(\varepsilon\sqrt r) + C \right)  }}{(| \tilde B_{u}| + \varepsilon)^{\alpha\gamma}} du \\
		\nonumber &= r^{1 - \frac{\alpha\gamma}{2} + \frac{\gamma^2}{4}} e^{-\frac{\gamma^2}{2}C} \int_{0}^{1 \wedge \tilde \tau} \frac{ e^{\gamma h_{\varepsilon\sqrt r}(\sqrt r \tilde B_{u} + B_t) + \frac{\gamma^2}{2}\log\varepsilon  }}{(| \tilde B_{u}| + \varepsilon)^{\alpha\gamma}} du \\
		\nonumber &\geq r^{1 - \frac{\alpha\gamma}{2} + \frac{\gamma^2}{4}} e^{-\frac{\gamma^2}{2}C} e^{\gamma \Omega_{\sqrt r}} \int_{0}^{1 \wedge \tilde \tau} \frac{ e^{\gamma \tilde h_{\varepsilon\sqrt r}(\sqrt r \tilde B_{u} + B_t) + \frac{\gamma^2}{2}\log\varepsilon  }}{(| \tilde B_{u}| + \varepsilon)^{\alpha\gamma}} du \\
		&\geq r^{1 - \frac{\alpha\gamma}{2} + \frac{\gamma^2}{4}} e^{-\gamma^2 C} e^{\gamma \Omega_{\sqrt r}} \int_{0}^{1 \wedge \tilde \tau} \frac{ e^{\gamma h'_{\varepsilon}(\tilde B_{u}) - \frac{\gamma^2}{2} \e{h'_\varepsilon(\tilde B_u)^2}  }}{(| \tilde B_{u}| + \varepsilon)^{\alpha\gamma}} du
		\label{eq:y.scaling}
	\end{align}
	where $h'$ is a zero boundary GFF on the disc of radius $\frac{1}{2}$. Substituting the last expression of \cref{eq:y.scaling} back into \cref{eq:girsanov} (and noticing that the exponent of $r$ is in fact $\beta$) lets us see that
	\begin{align}
		\nonumber \mb P & \left[ \int_{t}^{(t+r) \wedge \tau} e^{\gamma\left(h_{\varepsilon\sqrt r}(B_s) + \alpha \mb E_h \left[h_{\varepsilon\sqrt r}(B_s)h_{\varepsilon'}(B_t)\right]\right) - \frac{\gamma^2}{2}\e{h_{\varepsilon\sqrt r}(B_s)^2}} ds < r^{\beta + \delta} \bigg| \mc F_t \right] \leq \\
		\nonumber &\leq \mb P \left[ e^{-(\alpha\gamma + \gamma^2) C} r^\beta e^{\gamma\Omega_{\sqrt r}} \int_{0}^{1 \wedge \tilde \tau} \frac{ e^{\gamma h'_{\varepsilon}( \tilde B_{u}) - \frac{\gamma^2}{2}\e{h'_{\varepsilon}( \tilde B_{u})^2}}}{(| \tilde B_{u}| + \varepsilon)^{\alpha\gamma}} du < r^{\beta + \delta} \bigg| \mc F_t \right]\\
		&\leq \mb P\left[ e^{-(\alpha\gamma + \gamma^2) C} e^{\gamma\Omega_{\sqrt r}} < r^{\frac{\delta}{2}} \bigg| \mc F_t \right] + \mb P \left[ \int_{0}^{1 \wedge \tilde \tau} \frac{ e^{\gamma h'_{\varepsilon}( \tilde B_{u}) - \frac{\gamma^2}{2}\e{h'_{\varepsilon}( \tilde B_{u})^2}}}{(| \tilde B_{u}| + \varepsilon)^{\alpha\gamma}} du < r^{\frac{\delta}{2}} \right].
		\label{eq:split.prob}
	\end{align}
	The first term decays polynomially in $r$ as $r \to 0$ uniformly in $B_t$, by \cref{lem:gff.scaling}, and the second term decays polynomially in $r$ as $r \to 0$ by \cref{cor:lower.tail}. Therefore, looking back at \cref{eq:girsanov} again, there certainly exist some positive constants $D$ and $q$ such that
	\begin{align}
		\nonumber &\mb E \left[ \ind_{ \big\{ \int_{t}^{(t+r)\wedge \tau} e^{\gamma h_{\varepsilon\sqrt r}(B_s) - \frac{\gamma^2}{2}  \mb E [ h_{\varepsilon\sqrt r}(B_s)^2 ]}ds < r^{\beta+\delta} \big\}}  e^{\alpha h_{\varepsilon '}(B_t) - \frac{\alpha^2}{2} \e{h_{\varepsilon'}(B_t)^2}} \ind_{\left\{ \tau > t \right\}} \right] \leq \\
		\nonumber &\hspace{12em}\leq \e{ D r^{q} \ind_{ \left\{ \tau > t \right\} }} \\
		&\hspace{12em}= Dr^q \p{\tau > t}.
		\label{eq:ineq.conditional}
	\end{align}	
	When we integrate \cref{eq:ineq.conditional} over $t >0$, we find that
	\begin{equation}
		\mb E \Bigg[ \int_0^\tau \ind_{ \big\{ \int_{t}^{(t+r)\wedge \tau} e^{\gamma h_{\varepsilon\sqrt r}(B_s) - \frac{\gamma^2}{2}  \mb E [ h_{\varepsilon\sqrt r}(B_s)^2 ]}ds < r^{\beta+\delta} \big\}}  e^{\alpha h_{\varepsilon '}(B_t) - \frac{\alpha^2}{2} \e{h_{\varepsilon'}(B_t)^2}} dt \Bigg] \leq %\\[1em]
		\e{\tau} D r^{q}.
		\label{eq:ineq.single}
	\end{equation}

	Proposition 2.8 of \cite{garban2013liouville} tells us that, almost surely in $B$ and $h$, the measure defined by $\mu^{\varepsilon'}_\alpha(dt) = e^{\alpha h_{\varepsilon '}(B_t) - \frac{\alpha^2}{2} \e{h_{\varepsilon'}(B_t)^2}} dt$ converges weakly to the measure we have called $\mu_\alpha$. Therefore, as the set in the indicator function is open, we may use the portmanteau lemma and Fatou's lemma to see that
	\begin{align*}
		\mb E & \left[ \int_0^\tau \ind_{ \big\{ \int_{t}^{(t+r)\wedge \tau} e^{\gamma h_{\varepsilon\sqrt r}(B_s) - \frac{\gamma^2}{2}  \mb E [ h_{\varepsilon\sqrt r}(B_s)^2 ]}ds < r^{\beta+\delta} \big\}}  \mu_\alpha(dt) \right] \leq \\
		&\leq \mb E \Bigg[ \liminf_{\varepsilon' \to 0} \int_0^\tau \ind_{ \big\{ \int_{t}^{(t+r)\wedge \tau} e^{\gamma h_{\varepsilon\sqrt r}(B_s) - \frac{\gamma^2}{2}  \mb E [ h_{\varepsilon\sqrt r}(B_s)^2 ]}ds < r^{\beta+\delta} \big\}}  e^{\alpha h_{\varepsilon '}(B_t) - \frac{\alpha^2}{2} \e{h_{\varepsilon'}(B_t)^2}} dt \Bigg] \\
		&\leq \liminf_{\varepsilon' \to 0} \mb E \Bigg[ \int_0^\tau \ind_{ \big\{ \int_{t}^{(t+r)\wedge \tau} e^{\gamma h_{\varepsilon\sqrt r}(B_s) - \frac{\gamma^2}{2}  \mb E [ h_{\varepsilon\sqrt r}(B_s)^2 ]}ds < r^{\beta+\delta} \big\}}  e^{\alpha h_{\varepsilon '}(B_t) - \frac{\alpha^2}{2} \e{h_{\varepsilon'}(B_t)^2}} dt \Bigg] 
	\end{align*}
	Since $D$ and $q$ from \cref{eq:ineq.single} are independent of $\varepsilon'$, we can therefore see that
	\begin{equation*}
		\mb E \left[ \int_0^\tau \ind_{ \big\{ \int_{t}^{(t+r)\wedge \tau} e^{\gamma h_{\varepsilon\sqrt r}(B_s) - \frac{\gamma^2}{2}  \mb E [ h_{\varepsilon\sqrt r}(B_s)^2 ]}ds < r^{\beta+\delta} \big\}}  \mu_\alpha(dt) \right] \leq \e{\tau} D r^{q}.
	\end{equation*}
	We then use Fatou's lemma twice to conclude
	\begin{align*}
		\e{\mu_\alpha\left( \left\{ t\in[0,\tau] \, : \right.\right.\right. & \left.\left.\left. \! \mu_\gamma([t,(t+r)\wedge \tau]) < r^{\beta+\delta} \right\} \right) }= \\
		&= \e{\int_0^\tau \ind_{ \big\{ \mu_\gamma([t,(t+r)\wedge \tau]) < r^{\beta+\delta} \big\} } \mu_\alpha(dt)} \\
		&= \mb E \left[ \int_0^\tau \liminf_{\varepsilon\to0}\ind_{ \big\{ \int_{t}^{(t+r)\wedge\tau} e^{\gamma h_{\varepsilon\sqrt r}(B_s) - \frac{\gamma^2}{2}  \mb E [ h_{\varepsilon\sqrt r}(B_s)^2 ]}ds < r^{\beta+\delta} \big\}}  \mu_\alpha(dt) \right]\\
		&\leq \liminf_{\varepsilon \to 0} \mb E \left[ \int_0^\tau \ind_{ \big\{ \int_{t}^{(t+r)\wedge \tau} e^{\gamma h_{\varepsilon\sqrt r}(B_s) - \frac{\gamma^2}{2}  \mb E [ h_{\varepsilon\sqrt r}(B_s)^2 ]}ds < r^{\beta+\delta} \big\}}  \mu_\alpha(dt) \right]\\
		&\leq \e{\tau}D r^{q},
	\end{align*}
	and we are done, since $\e{\tau} < \infty$ (as it has exponentially decaying tails).
\end{proof}

\begin{proof}[Proof of \cref{prop:lower}]
	Using \cref{lem:size.bound} (taking $E = 2^{\beta+\delta}$) and Markov's inequality, we can bound the upper tail of the $\mu_\alpha$-measure of the set of times when $\mu_\gamma$ decays unusually fast by
	\begin{align*}
		\mb P & \left[ \mu_\alpha\left( \left\{ t \in [0,\tau] \, : \, \mu_\gamma([t,(t+r)\wedge \tau]) < 2^{\beta+\delta}r^{\beta+\delta} \right\} \right) \geq r^{q/2} \right] \leq \\
		&\hspace{4em}\leq r^{-q/2} \mb E \left[ \mu_\alpha\left( \left\{ t \in [0,\tau] \, : \, \mu_\gamma([t,(t+r)\wedge\tau]) < 2^{\beta+\delta}r^{\beta+\delta} \right\} \right) \right]\\
		&\hspace{4em}\leq D r^{q/2}.
	\end{align*}
	So, taking a sequence of scales $r_n = 2^{-n}$, we see that the events
	\begin{equation*}
		\left\{ \mu_\alpha\left( \left\{ t\in[0,\tau] \, : \, \mu_\gamma([t,(t+r_n)\wedge\tau]) < 2^{\beta+\delta}r_n^{\beta+\delta} \right\} \right) \geq r_{n}^{q/2} \right\}_{n \in \mb N}
	\end{equation*}
	occur only finitely often almost surely, by Borel-Cantelli. Therefore, for all $\Delta > 0$ (which may be random and depend on $\mu_\alpha([0,\tau])$), we can find a random but almost surely finite $N \in \mb N$ such that
	\begin{equation*}
		\mu_\alpha\left( \bigcup_{n\geq N} \left\{ t\in[0,\tau] \, : \, \mu_\gamma([t,(t+r_n)\wedge\tau]) < 2^{\beta+\delta}r_n^{\beta+\delta} \right\} \right) \leq \sum_{n \geq N} 2^{-qn/2} \leq \Delta,
	\end{equation*}
	and hence
	\begin{equation}
		\mu_\alpha\left( \bigcap_{n\geq N} \left\{ t\in[0,\tau] \, : \, \mu_\gamma([t,(t+r_n)\wedge\tau]) \geq 2^{\beta+\delta}r_n^{\beta+\delta} \right\} \right) \geq \mu_\alpha([0,\tau]) - \Delta.
		\label{eq:discrete.radii}
	\end{equation}
	We now need to infer the result for all $r \in (0,2^{-N})$ from the discrete set of radii we have it for in \cref{eq:discrete.radii}. So, let $t \in \bigcap_{n\geq N} \left\{ t\in[0,\tau] \, : \, \mu_\gamma([t,(t+r_n)\wedge\tau]) \geq r_n^{\beta+\delta} \right\}$, take $r \in (0, 2^{-N})$, and suppose $n$ is such that $r_{n+1} < r \leq r_n$. Then
	\begin{equation*}
		\mu_\alpha([t,(t+r)\wedge\tau]) \geq \mu_\alpha([t,(t+r_n)\wedge\tau]) \geq 2^{\beta+\delta} r^{\beta+\delta}_{n+1} = r_n^{\beta+\delta} \geq r^{\beta+\delta},
	\end{equation*}
	which implies that the discrete radii event is a subset of the continuous radii event:
	\begin{align*}
		\bigcap_{n\geq N} & \left\{ t\in[0,\tau] \, : \, \mu_\gamma([t,(t+r_n)\wedge\tau]) \geq 2^{\beta+\delta}r_n^{\beta+\delta} \right\} \subset \\
		& \hspace{4em} \subset \left\{ t \in [0,\tau] \, : \, \mu_\gamma([t,(t+r)\wedge\tau]) \geq r^{\beta + \delta} \quad \forall r \in [0,2^{-N}) \right\} = L^N_\gamma,
	\end{align*}
	and so we can conclude that
	\begin{equation*}
		\mu_\alpha(L^N_\gamma) \geq \mu_\alpha([0,\tau]) - \Delta.
	\end{equation*}
\end{proof}

We now state and prove a result which is essentially a ``converse'' to \cref{prop:lower}.

\begin{proposition}
	Fix $\delta > 0$, and let $\beta = 1 - \frac{\alpha\gamma}{2} + \frac{\gamma^2}{4}$. Define the set of times
	\begin{equation*}
		U^N_\gamma = \left\{ t\in [0,\tau] \, : \, \mu_\gamma([t,(t+r)\wedge \tau]) \leq r^{\beta-\delta} \quad \forall r \in [0,2^{-N})  \right\} 
	\end{equation*}
	Then for all $\Delta >0$, which may be random and depend on $\mu_\alpha([0,\tau])$, there exists some random but almost surely finite $N \in \mb N$ such that
	\begin{equation*}
		\mu_\alpha\left( U^N_\gamma \right) \geq \mu_\alpha\left( [0,\tau] \right) - \Delta.
	\end{equation*}
	\label{prop:upper}
\end{proposition}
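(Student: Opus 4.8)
The plan is to follow the proof of \cref{prop:lower} line by line, reversing every inequality. The first step is the exact analogue of \cref{lem:size.bound}: for every fixed $E>0$ there exist constants $D,q>0$ (depending on $E$) such that
\[
	\e{\mu_\alpha\!\left(\left\{t\in[0,\tau]\,:\,\mu_\gamma([t,(t+r)\wedge\tau])>E r^{\beta-\delta}\right\}\right)}\le D r^{q}\qquad\text{for all }r\in(0,1).
\]
Granting this, the proposition follows from the displayed bound exactly as \cref{prop:lower} followed from \cref{lem:size.bound}: take $E=2^{-(\beta-\delta)}$ when $\beta-\delta>0$ and $E=1$ otherwise, apply Markov's inequality along the dyadic scales $r_n=2^{-n}$, use Borel--Cantelli so that the events $\{\mu_\alpha(\{t:\mu_\gamma([t,(t+r_n)\wedge\tau])>E r_n^{\beta-\delta}\})\ge r_n^{q/2}\}$ occur only finitely often (so that beyond a random finite $N$ the discarded $\mu_\alpha$-mass is at most $\sum_{n\ge N}2^{-qn/2}<\Delta$), and then pass from the discrete radii to all $r\in(0,2^{-N})$ by monotonicity of $r\mapsto\mu_\gamma([t,(t+r)\wedge\tau])$ --- the choice of $E$ is precisely what makes $\mu_\gamma([t,(t+r_n)\wedge\tau])\le E r_n^{\beta-\delta}$ force $\mu_\gamma([t,(t+r)\wedge\tau])\le r^{\beta-\delta}$ when $r_{n+1}<r\le r_n$.

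For the moment bound I would run the Girsanov computation of \cref{lem:size.bound} verbatim up to \cref{eq:girsanov}, but now insert the \emph{upper} half of \cref{lem:cov.bound}, $\e{h_{\varepsilon\sqrt r}(B_s)h_{\varepsilon'}(B_t)}\le\log\tfrac{1}{|B_s-B_t|+\varepsilon\sqrt r}+C$, which bounds the tilted integral \emph{above} by $e^{\alpha\gamma C}\int_{t}^{(t+r)\wedge\tau}(|B_s-B_t|+\varepsilon\sqrt r)^{-\alpha\gamma}e^{\gamma h_{\varepsilon\sqrt r}(B_s)-\frac{\gamma^2}{2}\e{h_{\varepsilon\sqrt r}(B_s)^2}}ds$. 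The change of variables $s=t+ru$ and Brownian scaling of \cref{lem:size.bound} turn this, conditionally on $\mc F_t$, into $r^{1-\frac{\alpha\gamma}{2}}$ times an integral along the rescaled path $\sqrt r\tilde B_u+B_t$. On the part of the path lying in $B(B_t,\sqrt r)$ I decompose $h=h^{har}+\tilde h$ by the Markov property (\cref{prop:markov}) on $B(B_t,2\sqrt r)$, bound $h^{har}\le\Omega^{B_t}_{\sqrt r}:=\sup_{B(B_t,\sqrt r)}h^{har}$ --- precisely the quantity estimated in \cref{lem:gff.scaling} --- and use the conformal scale invariance of $\tilde h$ to extract the factor $r^{\gamma^2/4}$; since $1-\frac{\alpha\gamma}{2}+\frac{\gamma^2}{4}=\beta$ this part contributes at most $e^{O(1)}r^{\beta}e^{\gamma\Omega^{B_t}_{\sqrt r}}\int_0^{1\wedge\tilde\tau}(|\tilde B_u|+\varepsilon)^{-\alpha\gamma}e^{\gamma h'_\varepsilon(\tilde B_u)-\frac{\gamma^2}{2}\e{h'_\varepsilon(\tilde B_u)^2}}du$, with $h'$ a zero-boundary GFF on $B(0,\tfrac12)$. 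Splitting the event $\{(\text{conditional integral})>e^{-\alpha\gamma C}r^{\beta-\delta}\}$ according to whether $e^{\gamma\Omega^{B_t}_{\sqrt r}}>r^{-\delta/2}$ or the $h'$-integral exceeds $r^{-\delta/2}$, the first has probability $\le Cr^{p}$ uniformly in $B_t$ by \cref{lem:gff.scaling} (applied, if necessary, with a logarithmic threshold smaller than $-\log r$, which the proof of that lemma still covers), and the second $\le C_p r^{p\delta/2}$ by \cref{cor:upper.tail}, i.e. Markov's inequality combined with the uniform positive moment of \cref{prop:positive}.

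The genuinely new point --- and what I expect to be the main obstacle --- is the contribution of the rescaled path \emph{after} it leaves $B(B_t,\sqrt r)$. In \cref{lem:size.bound} this is simply dropped, because truncating the integration domain only decreases a lower bound; for an upper bound it must be estimated, and a crude estimate is too lossy since the residual time interval can be long. On that event the singularity is harmless, $(|B_s-B_t|+\varepsilon\sqrt r)^{-\alpha\gamma}\le(\sqrt r)^{-\alpha\gamma}$, so I would instead repeat the previous decomposition at the slightly larger scale $2\sqrt{r\log(1/r)}$, using the Gaussian tail of $\sup_{s\in[t,t+r]}|B_s-B_t|$ to confine the path to $B(B_t,\sqrt{r\log(1/r)})$ up to an event of probability $O(r^{c})$; this produces a bound of the shape $r^{\beta}\,\mathrm{polylog}(1/r)\,e^{\gamma\Lambda}\,X$, where $\Lambda$ is the supremum over $B(B_t,\sqrt{r\log(1/r)})$ of a harmonic projection of $h$ (again controlled by \cref{lem:gff.scaling}) and $X$ has a positive moment bounded uniformly in $\varepsilon$ (\cref{prop:positive}), and the polylogarithmic factor is absorbed into the $r^{-\delta/2}$ margin. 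Combining the two contributions yields $\p{(\text{conditional integral})>e^{-\alpha\gamma C}r^{\beta-\delta}\mid\mc F_t}\le D r^{q}$ uniformly in $B_t$; multiplying by $\ind_{\{\tau>t\}}$, taking $\mb E_B$, integrating over $t\in(0,\tau)$, and then letting $\varepsilon'\to0$ and $\varepsilon\to0$ --- using the weak convergence of $e^{\alpha h_{\varepsilon'}(B_t)-\frac{\alpha^2}{2}\e{h_{\varepsilon'}(B_t)^2}}dt$ to $\mu_\alpha(dt)$ (Proposition 2.8 of \cite{garban2013liouville}), the openness in $t$ of $\{\mu_\gamma([t,(t+r)\wedge\tau])>E r^{\beta-\delta}\}$, and Fatou's lemma, exactly as at the end of the proof of \cref{lem:size.bound} --- gives the displayed moment bound, hence the proposition.
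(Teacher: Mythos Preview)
Your outline is correct and follows the paper's own structure closely: the paper's proof of the proposition is literally one line, saying it follows from \cref{lem:size.bound.2} exactly as \cref{prop:lower} followed from \cref{lem:size.bound}, and your Borel--Cantelli plus discrete-to-continuous reduction (with the right choice of $E$) is that argument. The substance is in the moment bound, and here too your ingredients---Girsanov, the upper half of \cref{lem:cov.bound}, the Markov decomposition of $h$, \cref{lem:gff.scaling} for the harmonic supremum, and \cref{cor:upper.tail} for the residual integral---are exactly the ones the paper uses in \cref{lem:size.bound.2}.

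The one genuine difference is how you handle the confinement of the Brownian path, which you correctly flag as the new obstacle. You keep the time-rescaling by $r$ and then confine $B$ on $[t,t+r]$ to a ball of radius $\sqrt{r\log(1/r)}$ via the Gaussian tail of $\sup_{[0,r]}|B_s-B_t|$, absorbing the resulting polylogarithmic corrections into the $r^{-\delta/2}$ slack. The paper instead changes the scaling parameter itself: it rescales time by $R=r^{1-\delta^2/2}$ and space by $\sqrt R$, so that after rescaling the Brownian motion only has to be controlled on the short interval $[0,r/R]=[0,r^{\delta^2/2}]$, and $\mb P[\tilde\tau<r/R]$ decays polynomially for free. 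This avoids the two-scale decomposition you sketch (first $\sqrt r$, then $\sqrt{r\log(1/r)}$) and keeps everything at a single scale, at the cost of tracking the mismatch $(r/R)^{\beta/2}r^{-\delta/2}$ and checking it still diverges. Both routes work; the paper's is a little cleaner to execute, while yours stays closer to the template of \cref{lem:size.bound} and would also go through once you note that the $\sqrt r$-scale step is redundant and the whole bound can be run at the larger radius from the start.
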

This proposition is essentially saying that around $\alpha$-thick points, the map function $F_\gamma$ is $(\beta-\delta)$-H\"older continuous. To prove it, we need a lemma that is the equivalent of \cref{lem:size.bound}.

\begin{lemma}
	Fix $\delta>0$ and let $\beta = 1 - \frac{\alpha\gamma}{2} + \frac{\gamma^2}{4}$, and let $E>0$ be some positive constant. Then there exist two constants $D>0$ and $\eta>0$ such that
	\begin{equation*}
		\e{\mu_\alpha\left( \left\{ t\in[0,\tau] \, : \, \mu_\gamma([t,(t+r)\wedge \tau]) > E r^{\beta-\delta} \right\} \right) } \leq D r^{\eta}.
	\end{equation*}
	for all $r > 0$. 
	\label{lem:size.bound.2}
\end{lemma}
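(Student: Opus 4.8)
The strategy is to mirror the proof of \cref{lem:size.bound}, reversing every inequality and using the \emph{supremum} of the harmonic projection where that proof used the infimum, so that the rescaling produces the factor $r^{\gamma^2/4}$ rather than absorbing it. We may treat $E = 1$, as in \cref{lem:size.bound}. The first step is the Girsanov computation of \cref{eq:girsanov}: tilting $\mb P_h$ by $e^{\alpha h_{\varepsilon'}(B_t) - \frac{\alpha^2}{2}\e{h_{\varepsilon'}(B_t)^2}}$ and conditioning on $\mc F_t = \sigma(B_s;\, s \le t)$ rewrites the left-hand side, for the $\varepsilon'$-regularised measure $\mu_\alpha^{\varepsilon'}$ and the $\varepsilon\sqrt r$-regularised clock, as $\mb E_B\big[\int_0^\tau \ind_{\{\tau > t\}} P_t\, dt\big]$, where
\[
P_t = \mb P_h\!\left[\int_t^{(t+r)\wedge\tau} e^{\gamma\left(h_{\varepsilon\sqrt r}(B_s) + \alpha\e{h_{\varepsilon\sqrt r}(B_s)h_{\varepsilon'}(B_t)}\right) - \frac{\gamma^2}{2}\e{h_{\varepsilon\sqrt r}(B_s)^2}}\,ds > r^{\beta-\delta}\,\Big|\,\mc F_t\right].
\]
Here I would invoke the \emph{upper} bound of \cref{lem:cov.bound}, $\e{h_{\varepsilon\sqrt r}(B_s)h_{\varepsilon'}(B_t)} \le \log\frac{1}{|B_s - B_t| + \varepsilon\sqrt r} + C$, which holds almost surely on $\{s,t<\tau\}$, to bound the integrand inside $P_t$ above by $e^{\alpha\gamma C}\,(|B_s-B_t| + \varepsilon\sqrt r)^{-\alpha\gamma}\, e^{\gamma h_{\varepsilon\sqrt r}(B_s) - \frac{\gamma^2}{2}\e{h_{\varepsilon\sqrt r}(B_s)^2}}$, so the event in $P_t$ is contained in one where the integral of this upper bound exceeds a constant times $r^{\beta-\delta}$.

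The second step is the Brownian and Markov-property rescaling, exactly as in \cref{eq:bm.scaling} and \cref{eq:y.scaling}. After rescaling the Brownian motion one writes $h = h^{har} + \tilde h$ on a disc centred at $B_t$, and now uses $h^{har} \le \Omega_{\sqrt r} := \sup_{z \in B(B_t,\sqrt r)} h^{har}(z)$ together with the \emph{lower} bound $\e{h_{\varepsilon\sqrt r}(z)^2} \ge -\log(\varepsilon\sqrt r) - C$ of \cref{lem:cov.bound}; this is exactly what manufactures the factor $r^{\gamma^2/4}$, and the integral is then dominated by $C\, r^{\beta}\, e^{\gamma\Omega_{\sqrt r}}\, J$, where $J$ is a clock-type integral of the rescaled zero-boundary field against an independent Brownian motion, carrying the same singular weight $(|\cdot|+\varepsilon)^{-\alpha\gamma}$ as in \cref{cor:upper.tail}. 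Note that $\Omega_{\sqrt r}$ is precisely the quantity controlled by \cref{lem:gff.scaling} (here the supremum appears directly, with no need for the symmetry of the GFF). As after \cref{eq:y.scaling}, $\{C r^\beta e^{\gamma\Omega_{\sqrt r}} J > r^{\beta-\delta}\} \subseteq \{e^{\gamma\Omega_{\sqrt r}} > c\, r^{-\delta/2}\} \cup \{J > c\, r^{-\delta/2}\}$, whose probabilities are polynomially small in $r$ — uniformly in $B_t$ — by \cref{lem:gff.scaling} (whose proof in fact gives a polynomial bound for any threshold of the form $-c'\log r$, $c' > 0$) and by \cref{cor:upper.tail} respectively. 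One then integrates over $t$ against $dt$ (using $\e{\tau} < \infty$), and lets $\varepsilon' \to 0$ and then $\varepsilon \to 0$: since $\{\mu_\gamma([t,(t+r)\wedge\tau]) > r^{\beta-\delta}\}$ is open, its indicator is lower semicontinuous in these parameters, so the portmanteau and Fatou lemmas apply just as in \cref{lem:size.bound}.

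The genuine obstacle — and the reason this is not a verbatim mirror of \cref{lem:size.bound} — is that a lower bound only needs the contribution of the \emph{first} $\sqrt r$-scale Brownian excursion away from $B_t$, whereas here the whole integral over $[t,(t+r)\wedge\tau]$ must be bounded above, and on that interval $B$ leaves $B(B_t,\sqrt r)$ with probability bounded away from zero, so the Markov decomposition on the disc of radius $2\sqrt r$ does not by itself control the far part of the integral. I would handle this by restricting to the event $G_r = \{\sup_{s \in [t,(t+r)\wedge\tau]}|B_s - B_t| \le \sqrt{r\log(1/r)}\}$, whose complement has probability at most $r^{c}$ by the reflection principle, and carrying out the rescaling on the disc of radius $2\sqrt{r\log(1/r)}$ instead of $2\sqrt r$. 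The price is that the multiplicative constant, the conformal-radius correction in the renormalisation, the threshold fed into \cref{lem:gff.scaling}, and the moments of $J$ now involve $\log(1/r)$; but each of these grows only poly-logarithmically in $1/r$, so all are absorbed into the polynomial bounds at the (harmless) cost of a smaller exponent $\eta$. A cleaner-looking alternative is to decompose the integral dyadically according to whether $|B_s - B_t|$ is of order $2^k\sqrt r$ and bound each piece by the same mechanism: the exponent $2\beta = 2 - \alpha\gamma + \tfrac{\gamma^2}{2} > 0$ suppresses the fine scales ($k<0$), and the fact that a Brownian displacement of size $2^k\sqrt r$ within time $r$ has probability at most $e^{-c4^k}$ makes the sum over large $k$ converge.
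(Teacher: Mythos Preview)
Your proposal is correct and follows essentially the same architecture as the paper: Girsanov tilt, the upper bound of \cref{lem:cov.bound}, Brownian rescaling, Markov decomposition of $h$ with the \emph{supremum} of the harmonic part, and then the split into the $\Omega$-tail (controlled by \cref{lem:gff.scaling}) and the $J$-tail (controlled by \cref{cor:upper.tail}). You also correctly isolate the one genuine new difficulty versus \cref{lem:size.bound}, namely that for an upper bound one must control the integral over the whole interval $[t,t+r]$ and the Brownian motion need not stay in the $\sqrt r$-disc.

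The only real difference is in how this obstacle is resolved. You enlarge the spatial disc to radius $\sqrt{r\log(1/r)}$ and absorb the resulting poly-log factors into a smaller exponent $\eta$; this works. The paper instead introduces a second length-scale parameter $R$ and rescales time by $R$ and space by $\sqrt{R}$, obtaining $R^\beta e^{\gamma\Omega_{\sqrt R}} J$ with a rescaled Brownian motion run only up to time $r/R$. Choosing $R=r^{1-\delta^2/2}$ makes $r/R=r^{\delta^2/2}$ polynomially small (so the Brownian motion stays in the unit disc with polynomially high probability) while $(r/R)^{\beta/2}r^{-\delta/2}=r^{\beta\delta^2/2-\delta/2}\to\infty$ polynomially, which feeds cleanly into \cref{lem:gff.scaling} and \cref{cor:upper.tail}. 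The paper's polynomial perturbation is tidier---no logarithms to track---but your logarithmic variant and the dyadic alternative are both sound and lead to the same conclusion.
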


The introduction of long range correlations by Brownian motion is much more apparent in this proof than the proof of \cref{lem:size.bound}. Instead of re-scaling time by a factor of $r$ and space by a factor of $\sqrt r$ as we did previously, we will need to allow a bit of extra wiggle room. This is essentially due to the modulus of continuity of Brownian motion around time $r \downarrow 0$ being $\sqrt{2r \log \frac{1}{r}}$; we need a slightly lower power of $r$ to account for the $\log$ correction. We will introduce the radius $R$, which we will use as our scaling radius, and calculate what it needs to be closer to the end of the proof. 

\begin{proof}
	Again, we will prove this only in the case that $E = 1$. Let $r, R > 0$, and fix $\varepsilon, \varepsilon' >0 $ so that $\varepsilon' < \varepsilon \sqrt R$. Using Girsanov's change of measure theorem and \cref{lem:cov.bound} as we did in \cref{lem:size.bound}, we see that
		\begin{align}
			\nonumber \mb E_B \mb E_h \Big [ & \ind_{ \big\{ \int_{t}^{(t+r)\wedge \tau} e^{\gamma h_{\varepsilon\sqrt r}(B_s) - \frac{\gamma^2}{2}  \mb E [ h_{\varepsilon\sqrt r}(B_s)^2 ]}ds > r^{\beta-\delta} \big\}} e^{\alpha h_{\varepsilon '}(B_t) - \frac{\alpha^2}{2} \e{h_{\varepsilon'}(B_t)^2}} \ind_{ \left\{ \tau > t \right\} } \Big ] = \\[1em]
			&\hspace{4em} = \mb E_B \left[ \ind_{ \left\{ \tau > t \right\} } \mb P \left[ e^{\alpha\gamma C}\int_{t}^{(t+r)\wedge \tau} \frac{e^{\gamma h_{\varepsilon\sqrt{R}}(B_s) - \frac{\gamma^2}{2}\mb E\left[ h_{\varepsilon\sqrt{R}}(B_s)^2 \right]}}{\left( |B_s - B_t| + \varepsilon\sqrt{R} \right)^{\alpha\gamma}}ds > r^{\beta -\delta} \bigg| \mc F_{t}\right] \right],		%%
		\label{eq:girsanov.2}
	\end{align}
	where $\mc F_t = \sigma(B_s; \; s \leq t)$ is the natural filtration for $B$. Now, consider the integral in \cref{eq:girsanov.2}. We are looking for upper bounds on it, this time, to find an upper bound on the probability in \cref{eq:girsanov.2}. First of all, we apply a simple change of time, first $s \mapsto s-t$ and then $s = Ru$ to see that
	\begin{align*}
		\int_{t}^{(t+r)\wedge \tau} \frac{e^{\gamma h_{\varepsilon\sqrt{R}}(B_s) - \frac{\gamma^2}{2}\mb E\left[ h_{\varepsilon\sqrt{R}}(B_s)^2 \right]}}{\left( |B_s - B_t| + \varepsilon\sqrt{R} \right)^{\alpha\gamma}}ds 
		&= \int_{0}^{r \wedge (\tau-r)} \frac{e^{\gamma h_{\varepsilon\sqrt{R}}(B_{t+s}) - \frac{\gamma^2}{2}\mb E\left[ h_{\varepsilon\sqrt{R}}(B_{t+s})^2 \right]}}{\left( |B_{t+s} - B_t| + \varepsilon\sqrt{R} \right)^{\alpha\gamma}}ds	\\
		&= R\int_{0}^{\frac{r}{R} \wedge \frac{\tau-r}{R}} \frac{e^{\gamma h_{\varepsilon\sqrt{R}}(B_{t+Ru}) - \frac{\gamma^2}{2}\mb E\left[ h_{\varepsilon\sqrt{R}}(B_{t+Ru})^2 \right]}}{\left( |B_{t + Ru} - B_t| + \varepsilon\sqrt{R} \right)^{\alpha\gamma}}du	\\
		&\overset{d}{=}R \int_{0}^{\frac{r}{R} \wedge \tau'} \frac{e^{\gamma h_{\varepsilon\sqrt{R}}(\sqrt{R} \tilde B_u + B_{t}) - \frac{\gamma^2}{2}\mb E\left[ h_{\varepsilon\sqrt{R}}(\sqrt{R} \tilde B_u + B_{t+Ru})^2 \right]}}{\left( |\sqrt{R} \tilde B_u| + \varepsilon\sqrt{R} \right)^{\alpha\gamma}}du \\
		&= R^{1 - \frac{\alpha\gamma}{2}} \int_{0}^{\frac{r}{R} \wedge \tau'} \frac{e^{\gamma h_{\varepsilon\sqrt{R}}(\sqrt{R} \tilde B_u + B_{t}) - \frac{\gamma^2}{2}\mb E\left[ h_{\varepsilon\sqrt{R}}(\sqrt{R}\tilde B_u + B_{t+Ru})^2 \right]}}{\left( |\tilde B_u| + \varepsilon \right)^{\alpha\gamma}}du
	\end{align*}
	where $\tilde B$ is an independent Brownian motion started at zero, and 
	\begin{equation*}
		\tau' = \inf\left\{ u >0 \; : \; |\sqrt{R} \tilde B_u + B_t| = \frac{1}{2} \right\}. 
	\end{equation*}
	The equality in distribution holds $\mb P_h$-almost surely.  
	
	We now want to use the scaling properties of the field $h$, from \cref{lem:gff.scaling}. So, as before, we use the Markov property of the GFF to write $h = h^{har} + \tilde h$, where $h^{har}$ is the harmonic projection of $h$ onto the disc of radius $2\sqrt R$, centred at $B_t$, and $\tilde h$ has the law of a zero-boundary GFF on the disc of radius $2\sqrt R$, centred at $B_t$. If we write $\Omega_{\sqrt R} = \sup_{z \in B(B_t, \sqrt R)} h^{har}(z)$, we know that $h \leq \Omega_{\sqrt R} + \tilde h$ inside the disc $B(B_t, \sqrt{R})$. In order to use \cref{lem:gff.scaling}, we need to make sure that the $\tilde B$ does not move far from its starting point.	 So, let $\tilde \tau$ be the exit time of $\tilde B$ from the unit disc. Then, on the event $\left\{ \tilde \tau > \frac{r}{R}\right\}$ we can see that
	\begin{align}
		\nonumber R^{1 - \frac{\alpha\gamma}{2}}&\int_{0}^{\frac{r}{R}\wedge \tau'} \frac{e^{\gamma h_{\varepsilon\sqrt R}(\sqrt R \tilde B_{u}) - \frac{\gamma^2}{2}\mb E\left[ h_{\varepsilon\sqrt R}(\sqrt R \tilde B_{u})^2 \right]}}{\left( |\tilde B_{u}| + \varepsilon \right)^{\alpha\gamma}}du \leq \\
		\nonumber &\hspace{4em}\leq R^{1 - \frac{\alpha\gamma}{2}} \int_0^{\frac{r}{R}\wedge \tau'} \frac{e^{\gamma h_{\varepsilon\sqrt R}(\sqrt R \tilde B_u) - \frac{\gamma^2}{2}\left(-\log(\varepsilon\sqrt R) - C\right)}}{ (|\tilde B_u| + \varepsilon)^{\alpha\gamma} }du \\
		\nonumber &\hspace{4em}= R^{1 - \frac{\alpha\gamma}{2} + \frac{\gamma^2}{4}} e^{\frac{\gamma^2}{2}C} \int_0^{\frac{r}{R}\wedge \tau'} \frac{e^{\gamma h_{\varepsilon\sqrt R}(\sqrt R \tilde B_u) + \frac{\gamma^2}{2}\log(\varepsilon\sqrt R)}}{ (|\tilde B_u| + \varepsilon)^{\alpha\gamma} }du \\
		\nonumber &\hspace{4em}\leq R^{1 - \frac{\alpha\gamma}{2} + \frac{\gamma^2}{4}} e^{\frac{\gamma^2}{2}C} e^{\gamma \Omega_{\sqrt R}} \int_0^{\frac{r}{R}\wedge \tau'} \frac{e^{\gamma \tilde h_{\varepsilon\sqrt R}(\sqrt R \tilde B_u) + \frac{\gamma^2}{2}\log(\varepsilon\sqrt R)}}{ (|\tilde B_u| + \varepsilon)^{\alpha\gamma} }du \\
		&\hspace{4em}\leq e^{\gamma^2 C} R^{\beta}e^{\gamma \Omega_{\sqrt R}}\int_0^{\frac{r}{R}\wedge \tau'}\frac{e^{\gamma h'_\varepsilon(\tilde B_u) - \frac{\gamma^2}{2}\mb E\left[ h'_\varepsilon(\tilde B_u)^2 \right]}}{\left(|\tilde B_u| + \varepsilon\right)^{\alpha\gamma}}du,
		\label{eq:pen}
	\end{align}
	where $h'$ is a zero boundary GFF on the unit disc. Therefore, we can use the right hand side of \cref{eq:pen} to bound the probability in \cref{eq:girsanov.2} by
	\begin{align}
  	\nonumber\mb P& \Bigg[ e^{\alpha\gamma C} \int_{t}^{(t+r)\wedge \tau} \frac{e^{\gamma h_{\varepsilon\sqrt R}(B_s) - \frac{\gamma^2}{2}\mb E\left[ h_{\varepsilon\sqrt R}(B_s)^2 \right]}}{\left( |B_s - B_t| + \varepsilon\sqrt{R} \right)^{\alpha\gamma}}ds > r^{\beta -\delta} \bigg| \mc F_t\Bigg]\\
		\nonumber&\leq \mb P \left[ e^{(\alpha\gamma + \gamma^2) C} R^{\beta}e^{\gamma \Omega_{\sqrt R}}\int_0^{\frac{r}{R}\wedge \tau'}\frac{e^{\gamma h'_\varepsilon(\tilde B_u) - \frac{\gamma^2}{2}\mb E\left[ h'_\varepsilon(\tilde B_u)^2 \right]}}{(|\tilde B_u| + \varepsilon)}du > r^{\beta -\delta} ; \tilde \tau > \frac{r}{R} \bigg| \mc F_t \right] + %\\
		%
		%\nonumber & \hspace{30em} + 
		\mb P \left[ \tilde{\tau} < \frac{r}{R} \right]\\
		\nonumber&\leq \mb P \left[ e^{(\alpha\gamma + \gamma^2) C} R^{\beta}e^{\gamma \Omega_{\sqrt R}}\int_0^{\tilde \tau \wedge \tau'}\frac{e^{\gamma h'_\varepsilon(\tilde B_u) - \frac{\gamma^2}{2}\mb E\left[ h'_\varepsilon(\tilde B_u)^2 \right]}}{(|\tilde B_u| + \varepsilon)}du >  r^{\beta -\delta} \bigg| \mc F_t \right] + \mb P \left[ \tilde \tau < \frac{r}{R} \right]\\
		\nonumber&\leq \p{ e^{(\alpha\gamma + \gamma^2) C}e^{\gamma\Omega_{\sqrt R}} > \left(\frac{r}{R}\right)^{\frac{\beta}{2}}r^{-\frac{\delta}{2}} \bigg| \mc F_t } + \\
		&\hspace{4em}+ \p{ \int_0^{\tilde \tau}\frac{e^{\gamma h_\varepsilon(\tilde B_u) - \frac{\gamma^2}{2}\mb E\left[ h_\varepsilon(\tilde B_u)^2 \right]}}{(|\tilde B_u| + \varepsilon)}du > \left(\frac{r}{R}\right)^{\frac{\beta}{2}}r^{-\frac{\delta}{2}} } + \p{ \tilde \tau < \frac{r}{R} }.
		\label{eq:polynomial}
	\end{align}
	We are now in a position to see what choice we should make for the radius $R$. We want $\frac{r}{R} \to 0$ as $r \to 0$ polynomially in $r$, so that the third term in \cref{eq:polynomial} decays polynomially. We also want $\left( \frac{r}{R} \right)^{\frac{\beta}{2}}r^{-\frac{\delta}{2}}$ to converge to infinity, polynomially in $r$, so that the other terms in \cref{eq:polynomial} also decay polynomially: see below. The choice $R = r^{1 - \frac{\delta^2}{2}}$ works for $\delta$ small enough, since then we certainly have $\frac{r}{R} = r^{\delta^2} \to 0$, and also $\left( \frac{r}{R} \right)^{\frac{\beta}{2}}r^{-\frac{\delta}{2}} = r^{\frac{\beta}{2}\delta^2 - \frac{\delta}{2}} \rightarrow \infty$. The exponent $\frac{\beta}{2}\delta^2 - \frac{1}{2}\delta$ is negative for $\delta$ small enough, and so we have the desired properties. With this choice of $R$, the first term on the right hand side of \cref{eq:polynomial} decays polynomially by \cref{lem:gff.scaling}, the second term decays polynomially by \cref{cor:upper.tail}. We can bound the third term above by
	\begin{equation*}
		\p{\tilde \tau < \frac{r}{R}} \leq \p{T < \frac{r}{R}},
	\end{equation*}
	where $T$ is the exit time of a one dimensional Brownian motion from the interval $[-\frac{1}{\sqrt 2}, \frac{1}{\sqrt 2}]$. The stopping time $T$ has exponentially decaying tails, and so we can see that the third term decays polynomially as well. 
	
	Therefore, going all the way back to \cref{eq:girsanov.2}, there certainly exist some constants $D>0$ and $q>0$ such that, for $t > 0$, 
	\begin{equation*}
		\mb E \Big [ \ind_{ \big\{ \int_{t}^{(t+r)\wedge\tau} e^{\gamma h_{\varepsilon\sqrt r}(B_s) - \frac{\gamma^2}{2}  \mb E [ h_{\varepsilon\sqrt r}(B_s)^2 ]}ds > r^{\beta-\delta} \big\}}  e^{\alpha h_{\varepsilon '}(B_t) - \frac{\alpha^2}{2} \e{h_{\varepsilon'}(B_t)^2}} \ind_{ \left\{ \tau > t \right\} } \Big ] \leq D r^{\eta} \p{\tau > t}
	\end{equation*}
	Integrating over $t > 0$ gives
	\begin{equation}
		\mb E \Big [ \int_0^{\tau} \ind_{ \big\{ \int_{t}^{(t+r)\wedge\tau} e^{\gamma h_{\varepsilon\sqrt r}(B_s) - \frac{\gamma^2}{2}  \mb E [ h_{\varepsilon\sqrt r}(B_s)^2 ]}ds > r^{\beta-\delta} \big\}}  e^{\alpha h_{\varepsilon '}(B_t) - \frac{\alpha^2}{2} \e{h_{\varepsilon'}(B_t)^2}} dt \Big ] \leq D r^{\eta} \e{\tau},
		\label{eq:ineq.single.2}
	\end{equation}
	Now, note that \cref{eq:ineq.single.2} is almost identical to \cref{eq:ineq.single}. We use the same arguments to let $\varepsilon'$ and $\varepsilon$ converge to zero, and conclude that  
	\begin{equation*}
		\e{\mu_\alpha\left( \left\{ t\in[r,\tau] \, : \, \mu_\gamma([(t)\vee 0,(t+r)\wedge \tau]) > E r^{\beta-\delta} \right\} \right) } \leq D r^{\eta}\e{\tau}.
	\end{equation*}
	which completes the proof, as $\e{\tau} < \infty$. 
\end{proof}

\begin{proof}[Proof of \cref{prop:upper}] \Cref{prop:upper} follows from \cref{lem:size.bound.2} in exactly the same way that \cref{prop:lower} followed from \cref{lem:size.bound}. 
	
\end{proof}

\subsection{Proof of \cref{thm:lower.bound} and \cref{thm:upper.bound}}\label{subs:proofs}

We now have all of the tools ready to prove \cref{thm:lower.bound}, which we re-state here in more detail. 

\begin{theorem}
	Let $B$ be the Brownian motion used to construct the LBM time changes $F_\alpha$ and $F_\gamma$. Call $T_\alpha = \left\{ t > 0 \, : \, B_t \in \mc T_\alpha \right\}$ the set of times that the Brownian motion $B$ is in an $\alpha$-thick point. Then
	\begin{equation*}
		\dim_H(F_\gamma(T_\alpha)) \geq \frac{1 - \frac{\alpha^2}{4}}{1 - \frac{\alpha\gamma}{2} + \frac{\gamma^2}{4}}
	\end{equation*}
  where, by the definition of the change of time, $F_\gamma(T_\alpha)$ is the set of times the $\gamma$-LBM is in $\alpha$-thick points.
	\label{thm:lower.bound.exact}
\end{theorem}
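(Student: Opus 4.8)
The plan is to turn the regularity estimates of \cref{prop:lower} and \cref{prop:upper} into a dimension estimate, using the restricted H\"older principle of \cref{prop:holder} for one inequality and a mass-distribution (Frostman-type) argument for the other. I would work entirely inside the window $[0,\tau]$: since $F_\gamma(T_\alpha) \supseteq F_\gamma(T_\alpha \cap [0,\tau])$, monotonicity of $\dim_H$ reduces the theorem to the same lower bound for $F_\gamma(T_\alpha \cap [0,\tau])$, so the globalisation is free. Fix $\delta>0$ and write $\beta = 1 - \tfrac{\alpha\gamma}{2} + \tfrac{\gamma^2}{4}$; since the discriminant of $\beta$ as a quadratic in $\gamma$ is negative for $\alpha<2$ we have $\beta>0$, so no denominator vanishes. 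Recall also that $\mu_\alpha([0,\tau]) = F_\alpha(\tau) > 0$ almost surely (the integrand defining $F_\alpha$ is a.e.\ positive and $F_\alpha$ is continuous), so in particular $F_\gamma$ is strictly increasing and atomless in the sense that $F_\gamma^{-1}$ is a genuine increasing continuous function and $\mu_\alpha$ has no atoms.

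First I would assemble three ``full $\mu_\alpha$-measure'' facts. (i) Letting $\Delta\downarrow 0$ along a sequence in \cref{prop:lower}, the increasing union $\bigcup_N L^N_\gamma$ has full $\mu_\alpha$-measure in $[0,\tau]$; moreover, exactly as in the remark after \cref{prop:lower}, for fixed $N$ the bound $\mu_\gamma([t,(t+\rho)\wedge\tau])\ge\rho^{\beta+\delta}$ on $L^N_\gamma$ forces $|F_\gamma^{-1}(x)-F_\gamma^{-1}(x+r)|\le r^{1/(\beta+\delta)}$ for $x=F_\gamma(t)$ and all $r$ below the deterministic threshold $2^{-N(\beta+\delta)}$, i.e.\ $F_\gamma^{-1}$ is $\tfrac{1}{\beta+\delta}$-H\"older-like (\cref{defn:holder.like}) on $F_\gamma(L^N_\gamma)$. (ii) Applying \cref{prop:upper} with the parameter $\gamma$ replaced by $\alpha$ — so the relevant exponent is $1-\tfrac{\alpha^2}{2}+\tfrac{\alpha^2}{4}=1-\tfrac{\alpha^2}{4}$ — and again letting $\Delta\downarrow 0$, the increasing union $\bigcup_M U^M_\alpha$ has full $\mu_\alpha$-measure, with $\mu_\alpha([t,(t+r)\wedge\tau])\le r^{1-\alpha^2/4-\delta}$ for $t\in U^M_\alpha$ and $r<2^{-M}$. (iii) By Theorem 1.4 of \cite{berestycki2013diffusion}, $T_\alpha$ itself has full $\mu_\alpha$-measure in $[0,\tau]$.

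For the easy inequality I would, for each fixed $N$, apply \cref{prop:holder} to $F_\gamma^{-1}$ on the set $F_\gamma(L^N_\gamma\cap T_\alpha)\subseteq F_\gamma(L^N_\gamma)$. Since $F_\gamma^{-1}(F_\gamma(L^N_\gamma\cap T_\alpha))\supseteq L^N_\gamma\cap T_\alpha$ and $F_\gamma(L^N_\gamma\cap T_\alpha)\subseteq F_\gamma(T_\alpha)$, this gives
\[
	\dim_H(L^N_\gamma\cap T_\alpha) \;\le\; (\beta+\delta)\,\dim_H(F_\gamma(T_\alpha)),
\]
and taking the supremum over $N$ and using countable stability of Hausdorff dimension yields $\dim_H\bigl((\bigcup_N L^N_\gamma)\cap T_\alpha\bigr)\le(\beta+\delta)\dim_H(F_\gamma(T_\alpha))$.

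The remaining, and I expect most delicate, step is to show $\dim_H\bigl((\bigcup_N L^N_\gamma)\cap T_\alpha\bigr)\ge 1-\tfrac{\alpha^2}{4}-\delta$, using $\mu_\alpha$ as a mass distribution. Set $A := (\bigcup_N L^N_\gamma)\cap(\bigcup_M U^M_\alpha)\cap T_\alpha$, so $\mu_\alpha(A)=\mu_\alpha([0,\tau])>0$ by (i)--(iii); writing $A$ as the increasing union over $M$ of $A_M := (\bigcup_N L^N_\gamma)\cap U^M_\alpha\cap T_\alpha$, fix $M_0$ with $\mu_\alpha(A_{M_0})>0$, so that the bound $\mu_\alpha([t,(t+r)\wedge\tau])\le r^{1-\alpha^2/4-\delta}$ now holds uniformly over $t\in A_{M_0}$ and $r<2^{-M_0}$. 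Given any cover of $A_{M_0}$ by intervals $I_j$ of diameter $<2^{-M_0}$, discard those disjoint from $A_{M_0}$ and, for each remaining one, let $t_j$ be (after the usual arbitrarily small enlargement that is harmless since $\mu_\alpha$ is atomless) a point of $A_{M_0}$ close to $\inf(I_j\cap A_{M_0})$; then $I_j\cap A_{M_0}\subseteq[t_j,t_j+|I_j|]$ up to a negligible piece, so $\mu_\alpha(I_j\cap A_{M_0})\le \mu_\alpha([t_j,t_j+|I_j|]) + o(1) \le |I_j|^{1-\alpha^2/4-\delta}+o(1)$, and summing gives $\sum_j|I_j|^{1-\alpha^2/4-\delta}\ge \tfrac12\mu_\alpha(A_{M_0})>0$. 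Hence $\mc H^{\,1-\alpha^2/4-\delta}(A_{M_0})>0$, so $\dim_H(A_{M_0})\ge 1-\tfrac{\alpha^2}{4}-\delta$, and since $A_{M_0}\subseteq(\bigcup_N L^N_\gamma)\cap T_\alpha$, combining with the previous display gives $1-\tfrac{\alpha^2}{4}-\delta\le(\beta+\delta)\dim_H(F_\gamma(T_\alpha))$; letting $\delta\downarrow 0$ through a countable sequence finishes the proof. The genuinely new work having already been done in \cref{prop:lower,prop:upper}, the obstacles here are purely measure-theoretic: coping with the fact that the H\"older-like property of $F_\gamma^{-1}$ holds only on a $\mu_\alpha$-large subset rather than on all of $T_\alpha$ (handled by intersecting with $T_\alpha$ and exhausting in $N$), and converting the one-sided interval bound of \cref{prop:upper} into a bona fide Frostman lower bound on dimension (handled by choosing the left endpoints $t_j$ inside the set, legitimate because $\mu_\alpha$ is carried by $A_{M_0}$ up to restriction).
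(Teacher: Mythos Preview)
Your proposal is correct and follows essentially the same route as the paper: intersect $T_\alpha$ with the good sets $L^N_\gamma$ and $U^M_\alpha$ (the latter obtained by setting the parameter to $\alpha$ in \cref{prop:upper}) to get a set of positive $\mu_\alpha$-measure, apply \cref{prop:holder} to $F_\gamma^{-1}$ on the image of $L^N_\gamma$, use the Frostman/mass-distribution bound coming from $U^M_\alpha$ to get $\dim_H \ge 1-\tfrac{\alpha^2}{4}-\delta$ on the Brownian side, and let $\delta\downarrow 0$. The only cosmetic differences are that the paper fixes a single $N$ with $\mu_\alpha(T_\alpha\cap L^N_\gamma\cap U^N_\alpha)>0$ and invokes the mass distribution principle directly, whereas you exhaust over $N,M$ via countable stability and spell out the one-sided Frostman argument by hand; your limit-point trick for choosing $t_j$ near $\inf(I_j\cap A_{M_0})$ is exactly the manoeuvre the paper uses in the proof of \cref{prop:holder}.
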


\begin{proof}
	We will in fact prove the lower bound for times only only for the stopping time $\tau$ when the Brownian motion leaves the disc of radius $\frac{1}{2}$. To that end, we will abuse notation slightly and re-define the set of times $T_\alpha$ as
	\begin{equation*}
		T_\alpha = \left\{ t \in [0,\tau] \, : \, B_t \in \mc T_\alpha \right\}. 
	\end{equation*}
	Because $T_\alpha \cap L^N_\gamma \subset T_\alpha$, where $L^N_\gamma$ is defined as in \cref{prop:lower}, we know that 
	\begin{equation*}
		\dim_H(F_\gamma(T_\alpha \cap L^N_\gamma)) \leq \dim_H(F_\gamma(T_\alpha)). 
	\end{equation*}
	But, because	$F_\gamma^{-1}$ is a $\frac{1}{\beta+\delta}$-H\"older-like function, in the sense of \cref{defn:holder.like}, on intervals starting at times in the image $F_\gamma(L^N_\gamma)$, \cref{prop:holder} implies that
	\begin{equation}
		\dim_H(T_\alpha \cap L^N_\gamma) \leq (\beta+\delta) \dim_H(F_\gamma(T_\alpha \cap L^N_\gamma)),
		\label{eq:lower}
	\end{equation}
	and so to get a lower bound on $\dim_H(T_\alpha)$, we want to find a lower bound for $\dim_H(T_\alpha \cap L^N_\gamma)$. 
	We now use \cref{prop:lower,prop:upper} to see that we can take $N$ large enough to ensure that $T_\alpha \cap L^N_\gamma \cap U^N_\alpha$ has positive $\mu_\alpha$-measure (taking $\gamma = \alpha$ in \cref{prop:upper}, and $\Delta = \frac{1}{4}\mu_\alpha([0,\tau])$ for example). Since we also know that $\mu_\alpha[0,\tau] < \infty$ almost surely, the measure $\mu_\alpha$ defines a mass distribution on the set $T_\alpha \cap L^N_\gamma \cap U^N_\alpha$, and by the definition of $U^N_\alpha$, we know that 
	\begin{equation*}
		\mu_\alpha([t,t+r]) \leq r^{1 - \frac{\alpha^2}{4} - \delta}
	\end{equation*}
	for all $r \in [0, 2^{-N})$ and $t \in T_\alpha \cap L^N_\gamma \cap U^N_\alpha$. So, by the mass distribution principle (Theorem 4.19 of \cite{morters2010brownian} for example), we find that%]
	\begin{equation*}
		\dim_H(T_\alpha \cap L^N_\gamma \cap U^N_\alpha) \geq 1 - \frac{\alpha^2}{4} - \delta.
	\end{equation*}
	Therefore we certainly have the bound $\dim_H(T_\alpha \cap L^N_\gamma) \geq 1 - \frac{\alpha^2}{4} - \delta$, which we can substitute into \cref{eq:lower} and re-arrange to find
	\begin{equation*}
		\dim_H(F_\gamma(T_\alpha)) \geq \frac{1 - \frac{\alpha^2}{4} - \delta}{1 - \frac{\alpha\gamma}{2} + \frac{\gamma^2}{4} + \delta}.
	\end{equation*}
	Since $\delta$ was arbitrary we can take the limit $\delta \to 0$, and we have shown the result.
\end{proof}

And now we re-state \cref{thm:upper.bound} and prove it:

\begin{corollary}
	Let $B$ be the Brownian motion used to construct the LBM time changes $F_\alpha$ and $F_\gamma$. Call $T_\alpha = \left\{ t \in [0,T] \, : \, B_t \in \mc T_\alpha \right\}$ the set of times that the Brownian motion $B$ is in an $\alpha$-thick point. Then
	\begin{equation*}
		\dim_H(F_\gamma(T_\alpha)) = \frac{1 - \frac{\alpha^2}{4}}{1 - \frac{\alpha\gamma}{2} + \frac{\gamma^2}{4}}
	\end{equation*}
  where, by the definition of the change of time, $F_\gamma(T_\alpha)$ is the set of times the $\gamma$-LBM is in $\alpha$-thick points.
\end{corollary}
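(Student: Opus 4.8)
The plan is to prove the equality by combining the two matching inequalities. The lower bound $\dim_H(F_\gamma(T_\alpha)) \geq \tfrac{1-\alpha^2/4}{1-\alpha\gamma/2+\gamma^2/4}$ is precisely \cref{thm:lower.bound.exact}, which has just been established; note that the restriction to the sub-interval $[0,\tau]$ used there only weakens the lower bound, so the conclusion transfers verbatim to $T_\alpha = \{t\in[0,T] : B_t\in\mc T_\alpha\}$. For the matching upper bound it is enough to appeal to the upper bound of \cite{berestycki2013diffusion} (Theorem~1.4 there), which already asserts $\dim_H(\{t : Z^\gamma_t\in\mc T_\alpha\}) \leq \tfrac{1-\alpha^2/4}{1-\alpha\gamma/2+\gamma^2/4}$; together the two bounds give the corollary. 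So strictly speaking no new argument is needed.

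It is nevertheless worth recording how much of the upper bound can be extracted from the machinery of \cref{subsec:holder}, since this is the natural companion to the lower-bound argument of \cref{subs:proofs}. Fix $\delta>0$ and set $\beta = 1-\tfrac{\alpha\gamma}{2}+\tfrac{\gamma^2}{4}$. By \cref{prop:upper}, on the set $U^N_\gamma$ the increasing function $F_\gamma$ satisfies $|F_\gamma(t)-F_\gamma(t+r)| = \mu_\gamma([t,(t+r)\wedge\tau]) \leq r^{\beta-\delta}$ for all $r\in[0,2^{-N})$, \ie $F_\gamma$ is $(\beta-\delta)$-H\"older-like on $U^N_\gamma$ in the sense of \cref{defn:holder.like}. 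Applying \cref{prop:holder} to $F_\gamma$ and then \cref{lem:kaufmann} (which holds just as well for the restriction of $T_\alpha$ to $[0,\tau]$) gives
\begin{equation*}
	\dim_H\bigl(F_\gamma(T_\alpha\cap U^N_\gamma)\bigr) \;\leq\; \frac{1}{\beta-\delta}\,\dim_H\bigl(T_\alpha\cap U^N_\gamma\bigr) \;\leq\; \frac{1}{\beta-\delta}\Bigl(1-\tfrac{\alpha^2}{4}\Bigr).
\end{equation*}
Since the sets $U^N_\gamma$ increase with $N$, countable stability of Hausdorff dimension bounds $\dim_H\bigl(F_\gamma(T_\alpha\cap\bigcup_N U^N_\gamma)\bigr)$ by the same quantity, and letting $\delta\downarrow 0$ gives the exponent $\tfrac{1-\alpha^2/4}{\beta}$ for this part.

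The one place where the local estimates of \cref{subsec:holder} are not enough — and hence where the external input from \cite{berestycki2013diffusion} is genuinely being used — is the exceptional set $T_\alpha\setminus\bigcup_N U^N_\gamma$ of $\alpha$-thick points near which $F_\gamma$ grows faster than every power $r^{\beta-\delta}$. \Cref{prop:upper} only guarantees that this set is $\mu_\alpha$-null, which says nothing about the Hausdorff dimension of its image under $F_\gamma$, and I do not see a way to control that image from the local estimates alone. I therefore expect this exceptional-set contribution to be the only genuine obstacle; everything else is either the already-proved \cref{thm:lower.bound.exact}, a citation, or the routine use of \cref{prop:holder,lem:kaufmann} and countable stability described above.
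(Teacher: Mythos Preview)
Your proposal is correct and follows the paper's approach: combine the lower bound from \cref{thm:lower.bound.exact} with the upper bound cited from Theorem~1.4 of \cite{berestycki2013diffusion}. The only cosmetic difference is that the paper routes the citation through the auxiliary sets $\mc T^\pm_\alpha$ (with a case split $\alpha\gtrless\gamma$) because that is how the upper bound in \cite{berestycki2013diffusion} is stated, and then uses the trivial inclusion $T_\alpha\subset T^-_\alpha$ (resp.\ $T^+_\alpha$); your direct citation amounts to the same thing. Your supplementary discussion of what the local estimates of \cref{subsec:holder} can and cannot recover is accurate --- in particular your diagnosis that the $\mu_\alpha$-null exceptional set $T_\alpha\setminus\bigcup_N U^N_\gamma$ is the genuine obstruction to a self-contained upper bound --- but it is not needed for the proof.
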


\begin{proof}
	First, following from Theorem 1.4 of \cite{berestycki2013diffusion}, we introduce the sets
	\begin{align*}
		\mc T^-_\alpha = \left\{ z \in \mb C \; : \; \liminf_{\varepsilon \to 0} \frac{h_\varepsilon(z)}{\log \frac{1}{\varepsilon}} \geq \alpha \right\}, \\
		\mc T^+_\alpha = \left\{ z \in \mb C \; : \; \limsup_{\varepsilon \to 0} \frac{h_\varepsilon(z)}{\log \frac{1}{\varepsilon}} \leq \alpha \right\}
	\end{align*}
	We will call $T^-_\alpha = \left\{ t \in [0,\tau] \; : \; B_t \in \mc T^-_\alpha \right\}$, and similarly define $T^+_\alpha$ from $\mc T^+_\alpha$. We know, from \cite{berestycki2013diffusion}, that for $\alpha > \gamma$ we have the upper bound
	\begin{equation*}
		\dim_H (F_\gamma(T^-_\alpha)) \leq \frac{1 - \frac{\alpha^2}{4}}{1 - \frac{\alpha\gamma}{2} + \frac{\gamma^2}{4}},
	\end{equation*}
	and the same result holds when we have $\alpha < \gamma$ and we replace $T^-_\alpha$ with $T^+_\alpha$.

	Let us consider the case $\alpha > \gamma$. We know that $T_\alpha \subset T^-_\alpha$, and so we have
	\begin{equation*}
		\frac{1 - \frac{\alpha^2}{4}}{1 - \frac{\alpha\gamma}{2} + \frac{\gamma^2}{4}} \leq \dim_H(F_\gamma(T_\alpha)) \leq \dim_H(F_\gamma(T^-_\alpha)) \leq \frac{1 - \frac{\alpha^2}{4}}{1 - \frac{\alpha\gamma}{2} + \frac{\gamma^2}{4}},
	\end{equation*}
	showing us the equality. We can show equality in the case $\alpha < \gamma$ in the same way.
\end{proof}

\subsection{Proofs of Regularity Properties}\label{subs:regularity}

We can now state \cref{thm:regularity} again, and give the proof.

\begin{theorem}
	For $\mu_\alpha$-almost every $t \geq 0$, the change of time $F_\gamma$ has the following growth rate:
	\begin{equation}
		\lim_{r \rightarrow 0} \frac{\log |F_\gamma(t) - F_\gamma(t+r)|}{\log |r|} = 1 - \frac{\alpha\gamma}{2} + \frac{\gamma^2}{4},
		\label{eq:regularity}
	\end{equation}
	almost surely. 
	\label{thm:regularity.restated}
\end{theorem}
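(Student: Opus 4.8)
The plan is to read \cref{eq:regularity} off directly from the two--sided polynomial control of $\mu_\gamma$ around $\mu_\alpha$--typical times established in \cref{prop:lower,prop:upper}. Write $\beta = 1 - \tfrac{\alpha\gamma}{2} + \tfrac{\gamma^2}{4}$, and note that for $t<\tau$ and $r>0$ small enough that $t+r<\tau$ one has $|F_\gamma(t+r)-F_\gamma(t)| = \mu_\gamma([t,t+r])$, while continuity of $F_\alpha$ forces $\mu_\alpha(\{\tau\})=0$, so $\mu_\alpha$--a.e.\ $t$ lies strictly below $\tau$. Fix $\delta>0$. Applying \cref{prop:lower} with parameters $\alpha,\gamma,\delta$ and \cref{prop:upper} with the \emph{same} $\gamma$ and $\delta$, for each $\Delta>0$ there are a.s.\ finite $N_1,N_2\in\mb N$ with $\mu_\alpha(L^{N_1}_\gamma)\geq\mu_\alpha([0,\tau])-\Delta$ and $\mu_\alpha(U^{N_2}_\gamma)\geq\mu_\alpha([0,\tau])-\Delta$; since $L^N_\gamma$ and $U^N_\gamma$ increase with $N$, taking $N=N_1\vee N_2$ gives $\mu_\alpha(L^N_\gamma\cap U^N_\gamma)\geq\mu_\alpha([0,\tau])-2\Delta$, and letting $\Delta\downarrow0$ shows that $\mu_\alpha$--a.e.\ $t$ lies in $L^N_\gamma\cap U^N_\gamma$ for some random finite $N=N(t)$. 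For such a $t$ and all $r\in(0,2^{-N})$, the defining inequalities of the two sets sandwich
\begin{equation*}
	r^{\beta+\delta}\ \leq\ \mu_\gamma([t,(t+r)\wedge\tau])\ \leq\ r^{\beta-\delta},
\end{equation*}
and taking logarithms and dividing by $\log r<0$ (which reverses the inequalities) yields $\beta-\delta\leq \log|F_\gamma(t+r)-F_\gamma(t)|/\log r\leq\beta+\delta$; hence the $\liminf$ and $\limsup$ of the ratio in \cref{eq:regularity} as $r\downarrow0$ both lie in $[\beta-\delta,\beta+\delta]$.

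Running this argument along $\delta=1/k$, $k\in\mb N$, each value of $k$ costs only a $\mu_\alpha$--null exceptional set; the countable union of these is still $\mu_\alpha$--null (``$\mu_\alpha$--null'' is stable under countable unions since $\mu_\alpha$ is, almost surely, a fixed finite measure), and off that union the $\liminf$ and $\limsup$ agree and equal $\beta$. This establishes \cref{eq:regularity} for $\mu_\alpha$ restricted to $[0,\tau]$ and for the one--sided limit $r\downarrow0$. The backward limit $r\uparrow0$ --- that is, the behaviour of $|F_\gamma(t)-F_\gamma(t-s)|=\mu_\gamma([t-s,t])$ as $s\downarrow0$ --- is obtained from the verbatim Girsanov--and--scaling argument of \cref{lem:size.bound,lem:size.bound.2} applied to intervals \emph{ending} at $t$ (the covariance bound of \cref{lem:cov.bound} and the Brownian scaling used there are insensitive to the direction of time), giving backward analogues of \cref{prop:lower,prop:upper}; combining the two one--sided limits produces the two--sided limit in \cref{eq:regularity}.

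It remains to remove the restriction to $[0,\tau]$. Every estimate feeding \cref{prop:lower,prop:upper} is local in time and invariant under time--translation and Brownian scaling, and, by conformal invariance of the clock process, under passing to any other ball compactly contained in $\mc D$; hence the same statement holds with $\tau$ replaced by the exit time of $B$ from any such ball started at its centre. Covering the lifetime of $B$ inside $\mc D$ by countably many such windows and invoking the strong Markov property at their successive entrance times, one patches the finitely many null exceptional sets per window into a single $\mu_\alpha$--null set, which yields \cref{eq:regularity} for $\mu_\alpha$--a.e.\ $t\geq0$. I expect the only step demanding genuine care to be this localization/patching, together with the check that the backward--increment versions of \cref{prop:lower,prop:upper} indeed go through unchanged; the core two--sided polynomial estimate is already supplied by those two propositions.
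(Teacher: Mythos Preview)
Your treatment of the one--sided limit $r\downarrow 0$ is correct and matches the paper's \cref{lem:regularity.restated}: combine \cref{prop:lower,prop:upper} at a fixed $\delta$, intersect over $\delta=1/k$, done.

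The gap is in your backward step. You claim the Girsanov--and--scaling argument of \cref{lem:size.bound,lem:size.bound.2} applies ``verbatim'' to intervals $[t-r,t]$ because ``the Brownian scaling used there [is] insensitive to the direction of time.'' It is not. After Girsanov, those lemmas condition on $\mc F_t=\sigma(B_s:s\le t)$ and use the \emph{forward} Markov property: $(B_{t+s}-B_t)_{s\ge 0}$ is a Brownian motion independent of $B_t$, which is then the centre of the GFF Markov decomposition. For the backward interval, the Girsanov weight is still at $B_t$, but the integrand lives on $(B_{t-s})_{s\in[0,r]}$. Conditionally on $B_t$ and on $\{\tau>t\}$, this is a piece of a Brownian bridge (from $B_t$ to $0$, conditioned to stay in the disc), not a Brownian motion, so you cannot rescale it as before. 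If instead you condition on $\mc F_{t-r}$ and run the forward scaling from $t-r$, the singularity in the denominator lands at $\tilde B_1$ rather than at the origin, i.e.\ you face $\int_0^{\tilde\tau}\frac{e^{\gamma h'_\varepsilon(\tilde B_u)-\cdots}}{(|\tilde B_u-\tilde B_1|+\varepsilon)^{\alpha\gamma}}\,du$, which is not covered by \cref{prop:positive} as stated. Either route requires genuine extra work; neither is ``verbatim.''

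The paper avoids this by a different device: it proves a short absolute--continuity lemma (\cref{lem:abs.cont}) saying that, conditionally on $B_t$ and $\{\tau>t\}$, the laws of $(B_{t+s})_{s\in[0,\varepsilon]}$ and $(B_{t-s})_{s\in[0,\varepsilon]}$ are mutually absolutely continuous for $\varepsilon<t$. Since the forward statement is a $\mu_\alpha$--a.e.\ (hence absolute--continuity--stable) assertion, this transfers it directly to the backward increments. Concretely, one samples $T$ exponential, notes that the law of $F_\alpha^{-1}(T)$ is absolutely continuous with respect to $\mu_\alpha$, applies \cref{lem:regularity.restated} at that random time, and then invokes \cref{lem:abs.cont} to flip the direction.

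Finally, your localization/patching paragraph is unnecessary here: in the paper $\mu_\alpha$ is defined as a measure on $[0,\tau]$, so ``$\mu_\alpha$--a.e.\ $t\ge 0$'' already restricts to $[0,\tau]$.
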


Before we start the proof, we would like to explain the intuition behind $\mu_\alpha$-almost every $t \in T_\alpha$. Suppose we have our GFF $h$, and the Brownian motion $B$ which is the path of our Liouville Brownian motion. We now run an $\alpha$-LBM, $Z^\alpha$, along the path $B$, using $h$ to calculate the time change $F_\alpha$. At some time $\mathbf t$, chosen uniformly at random from the lifetime of $Z^\alpha$, we inspect the point in the plane occupied by $Z^\alpha_{\mathbf t}$. Because an $\alpha$-LBM spends Lebesgue-almost all of its time in $\alpha$-thick points, we know that the point chosen by $Z^\alpha_{\mathbf t}$ is an $\alpha$-thick point, almost surely. We also know that it is on the path of the Brownian motion $B$. If we call the time $B$ passes through this point $t$, i.e.~$t = F^{-1}_\alpha(\mathbf t)$, we know that, around this time, the $\gamma$-time change, $F_\gamma$, has the regularity property given in \cref{eq:regularity}.

To prove \cref{thm:regularity.restated}, we will first prove it while taking the limit $r \to 0$, i.e.~as $r$ approaches $0$ from above. 

\begin{lemma}
	For $\mu_\alpha$-almost every $t \geq 0$, the change of time $F_\gamma$ has the following growth rate:
	\begin{equation*}
		\lim_{r \to 0} \frac{\log |F_\gamma(t) - F_\gamma(t+r)|}{\log r} = 1 - \frac{\alpha\gamma}{2} + \frac{\gamma^2}{4},
	\end{equation*}
	almost surely. 
	\label{lem:regularity.restated}
\end{lemma}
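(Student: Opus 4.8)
The plan is to read the lemma off directly from \cref{prop:lower,prop:upper}, applied with the pair of parameters $(\alpha,\gamma)$ themselves (so that $\mu_\gamma$ governs the increments of $F_\gamma$ while $\mu_\alpha$ is the reference measure we weight by), followed by a routine upgrade from ``holds on a set of almost full $\mu_\alpha$-mass'' to ``holds $\mu_\alpha$-almost everywhere''. Throughout write $\beta = 1 - \frac{\alpha\gamma}{2} + \frac{\gamma^2}{4}$, and note that for $t<\tau$ and $0<r<\tau-t$ one has $\mu_\gamma([t,(t+r)\wedge\tau]) = F_\gamma(t+r) - F_\gamma(t) = |F_\gamma(t)-F_\gamma(t+r)|$, since $F_\gamma$ is continuous and strictly increasing.

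First I would fix $\delta>0$ and $\Delta>0$ and invoke \cref{prop:lower,prop:upper} to produce a random but almost surely finite $N$ such that $A := L^N_\gamma \cap U^N_\gamma$ satisfies $\mu_\alpha(A) \ge \mu_\alpha([0,\tau]) - 2\Delta$. By the very definitions of $L^N_\gamma$ and $U^N_\gamma$, every $t\in A$ with $t<\tau$ obeys
\[
r^{\beta+\delta} \;\le\; |F_\gamma(t)-F_\gamma(t+r)| \;\le\; r^{\beta-\delta} \qquad \text{for all } r\in\bigl(0,\ 2^{-N}\wedge(\tau-t)\bigr).
\]
Taking logarithms and dividing by $\log r<0$ reverses the inequalities, so for such $t$
\[
\beta-\delta \;\le\; \liminf_{r\to0}\frac{\log|F_\gamma(t)-F_\gamma(t+r)|}{\log r} \;\le\; \limsup_{r\to0}\frac{\log|F_\gamma(t)-F_\gamma(t+r)|}{\log r} \;\le\; \beta+\delta.
\]
Thus the target limit is pinned into the interval $[\beta-\delta,\beta+\delta]$ for $\mu_\alpha$-a.e.\ $t$ lying in a set of $\mu_\alpha$-mass at least $\mu_\alpha([0,\tau])-2\Delta$.

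To conclude, for each $k\in\mb N$ let $G_k$ be the set of $t\in[0,\tau)$ for which both the $\liminf$ and the $\limsup$ above lie in $[\beta-\tfrac{1}{k},\ \beta+\tfrac{1}{k}]$; the point is that $G_k$ is a fixed (random) set depending only on $F_\gamma$, not on any choice of $\Delta$ or $N$. Running the previous step with $\delta=\tfrac{1}{k}$ and $\Delta=\tfrac{1}{m}$ exhibits, almost surely and for every $m$, a subset of $G_k$ of $\mu_\alpha$-measure at least $\mu_\alpha([0,\tau])-\tfrac{2}{m}$; letting $m\to\infty$ gives $\mu_\alpha(G_k)=\mu_\alpha([0,\tau])$ almost surely, for each fixed $k$. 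Intersecting over $k$, almost surely $\mu_\alpha\bigl(\bigcap_k G_k\bigr)=\mu_\alpha([0,\tau])$, and on $\bigcap_k G_k$ the $\liminf$ and $\limsup$ coincide and equal $\beta$. Since $\mu_\alpha$ is carried by $[0,\tau]$ and gives no mass to the single point $\tau$, this is exactly the assertion. I do not expect a genuine obstacle at this stage: the analytic substance is entirely absorbed into \cref{prop:lower,prop:upper} (hence into \cref{lem:size.bound,lem:size.bound.2}). The only points needing a touch of care are that the truncation $(t+r)\wedge\tau$ is irrelevant to the $r\to0$ asymptotics, which is legitimate precisely because $\mu_\alpha$-a.e.\ $t$ lies strictly below $\tau$, and the soft remark that a fixed set containing subsets of $\mu_\alpha$-measure arbitrarily close to full must itself have full measure, which is what licenses taking the intersection over $k$.
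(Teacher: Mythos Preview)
Your proposal is correct and follows essentially the same approach as the paper: both arguments invoke \cref{prop:lower,prop:upper} to trap $\mu_\gamma([t,t+r])$ between $r^{\beta+\delta}$ and $r^{\beta-\delta}$ on a set of nearly full $\mu_\alpha$-measure, then let $\Delta\to0$ and $\delta\to0$. The paper is slightly terser, passing through the unions $L_\gamma=\bigcup_N L^N_\gamma$ and $U_\gamma=\bigcup_N U^N_\gamma$ rather than your explicit sets $G_k$, but the content is identical.
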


\begin{proof}
	Most of the work for this proof has been done in \cref{prop:lower,prop:upper}. Recall that for some arbitrary $\delta>0$ we defined 
	\begin{equation*}
		L^N_\gamma = \left\{ t\in [0,\tau] \, : \, \mu_\gamma([t,(t+r)\wedge \tau]) \geq r^{\beta+\delta} \quad \forall r \in [0,2^{-N})  \right\},
	\end{equation*}
	and
	\begin{equation*}
		U^N_\gamma = \left\{ t\in [0,\tau] \, : \, \mu_\gamma([t,(t+r)\wedge \tau]) \leq r^{\beta-\delta} \quad \forall r \in [0,2^{-N})  \right\}.
	\end{equation*}
	Now let us define
	\begin{equation*}
		L_\gamma = \bigcup_N L^N_\gamma = \left\{ t \in [0,\tau) \, : \, \limsup_{r \to 0}\frac{\log\mu_\gamma([t,t+r]))}{\log r} \leq \beta + \delta \right\},
	\end{equation*}
	and similarly define $U_\gamma = \bigcup_N U^N_\gamma$.

	We showed in \cref{prop:lower,prop:upper} that for any $\Delta > 0$, we could find $N$ large enough that 
	\begin{equation*}
		\mu_\alpha(L^N_\gamma) \geq \mu_\alpha([0,\tau]) - \Delta,
	\end{equation*}
	and
	\begin{equation*}
		\mu_\alpha(U^N_\gamma) \geq \mu_\alpha([0,\tau]) - \Delta.
	\end{equation*}
	Since $\Delta$ was arbitrary and $L^N_\gamma$, $U^N_\gamma$ are increasing sets, we find that
	\begin{equation*}
		\mu_\alpha(L_\gamma \cap U_\gamma) = \mu_\alpha([0,\tau]).
	\end{equation*}
	Because $\delta$ was arbitrary, and we defined $\mu_\gamma([t,t+r]) := F(t+r) - F(t)$, we have shown the result. 
\end{proof}

We now need a lemma which allows us to ``reverse time'' in some way, and extend the result from \cref{lem:regularity.restated} to the statement in \cref{thm:regularity.restated}. 

\begin{lemma}
	Let $B$ be a Brownian motion started at zero, and let $t > 0$. Define two stochastic processes, conditional on $B_t$, by setting
	\begin{equation*}
		W^+_s = B_{t+s}
	\end{equation*}
	for $s \geq 0$, and
	\begin{equation*}
		W^-_s = B_{t - s}
	\end{equation*}
	for $s \in [0,t]$. Now, let $\varepsilon < t$. Then, conditional on the event $\{\tau > t\}$ (where $\tau$ is the first exit time of $B$ from the disc of radius $\frac{1}{2}$), the laws of the restricted processes $(W^+_s)_{s \in [0,\varepsilon]}$ and $(W^-_s)_{s \in [0,\varepsilon]}$ are absolutely continuous with respect to each other.
	\label{lem:abs.cont}
\end{lemma}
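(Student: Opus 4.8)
The plan is to compare the two laws by decomposing $B$ on the interval $[t-\varepsilon, t+\varepsilon]$ into its value at the anchoring time $t$ and two independent Brownian bridges (or, equivalently, two independent Brownian motions started from $B_t$). The key observation is that for a Brownian motion $B$ started at zero, the pre-$t$ and post-$t$ increments are not mutually absolutely continuous over their whole lifetimes — the pre-$t$ piece is pinned at $0$ at time $s = t$ — but when we look only at the short windows $[0,\varepsilon]$ with $\varepsilon < t$, neither piece "sees" the pinning, so the Radon--Nikodym derivative is well behaved. Concretely, first I would write, conditionally on $B_t = x$, the process $(W^-_s)_{s\in[0,\varepsilon]}$ as a Brownian motion started at $x$ whose law is that of a Brownian motion conditioned to be at $0$ at time $t$ (a Brownian bridge run backwards). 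The law of $(W^+_s)_{s\in[0,\varepsilon]}$ conditionally on $B_t = x$ is simply that of a Brownian motion started at $x$, run freely. Both, restricted to $[0,\varepsilon]$, are absolutely continuous with respect to a free Brownian motion started at $x$ on $[0,\varepsilon]$: for $W^+$ this is an equality of laws, and for $W^-$ the Radon--Nikodym derivative is the explicit Gaussian bridge density
\begin{equation*}
    \frac{d\,\mathrm{Law}(W^-|_{[0,\varepsilon]})}{d\,\mathrm{Law}(\text{BM from }x|_{[0,\varepsilon]})}(\omega) = \frac{p_{t-\varepsilon}(0 - \omega_\varepsilon)}{p_t(0 - x)},
\end{equation*}
where $p_u$ is the two-dimensional heat kernel; this follows from the Markov property applied at time $\varepsilon$. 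Since $\varepsilon < t$ the density $p_{t-\varepsilon}(\cdot)$ is a bounded, strictly positive continuous function, so this derivative is bounded above and below away from $0$ and $\infty$ on the event we care about.

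The next step is to incorporate the conditioning on $\{\tau > t\}$. On this event $B$ stays in $B(0,\tfrac12)$ up to time $t$, so in particular $B_t$ lies in the (open) disc and, for $\varepsilon$ small, the windows $W^\pm|_{[0,\varepsilon]}$ are confined near $B_t$; the conditioning changes the reference measure on $(B_t, W^\pm|_{[0,\varepsilon]})$ by an extra nonnegative density (the probability the appropriate bridge/free piece avoids $\partial B(0,\tfrac12)$), which is again strictly positive and bounded, uniformly once we restrict to $B_t$ in a compact subset of the disc — and $\{\tau > t\}$ forces exactly that. So, conditionally on $\{\tau > t\}$ and on $B_t$, both $\mathrm{Law}(W^+|_{[0,\varepsilon]})$ and $\mathrm{Law}(W^-|_{[0,\varepsilon]})$ remain equivalent to the same reference law, with Radon--Nikodym derivatives bounded above and below; composing, $\mathrm{Law}(W^-|_{[0,\varepsilon]})$ and $\mathrm{Law}(W^+|_{[0,\varepsilon]})$ are mutually absolutely continuous. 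Finally, I would integrate out $B_t$ against its conditional law given $\{\tau > t\}$ (a probability measure supported in $B(0,\tfrac12)$) to obtain the unconditional-on-$B_t$ statement claimed in the lemma.

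The main obstacle I anticipate is bookkeeping the interaction between the bridge conditioning (pinning at time $t$) and the survival conditioning $\{\tau > t\}$ cleanly, so that the bounds on the Radon--Nikodym derivatives are genuinely uniform. The cleanest route is probably to avoid explicit bridge densities past a single application of the Markov property: condition on $\mathcal F$ generated by $(B_s : s \le t-\varepsilon)$ together with $(B_s : s \ge t+\varepsilon)$, observe that given this data the two pieces $W^-|_{[0,\varepsilon]}$ and $W^+|_{[0,\varepsilon]}$ are conditionally independent Brownian bridges of the \emph{same} duration $\varepsilon$ between explicit endpoints, and note that any two Brownian bridges of equal duration with endpoints in a fixed compact set have mutually equivalent laws with derivatives bounded in terms of the endpoint locations only. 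Since on $\{\tau > t\}$ all relevant endpoints lie in $\overline{B(0,\tfrac12)}$, the bounds are uniform, and the lemma follows after taking expectations over the conditioning $\sigma$-algebra. I do not expect to need anything beyond the Markov property of Brownian motion and Gaussian density estimates; no input from the GFF side of the paper enters here.
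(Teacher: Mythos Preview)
Your approach is correct and matches the paper's: condition on $B_t = z$, note that the survival event $\{\tau > t\}$ has positive probability and hence does not affect absolute continuity, identify $W^-$ as a time-reversed Brownian bridge from $0$ to $z$ and $W^+$ as a free Brownian motion from $z$, and then invoke the standard fact (the paper simply cites Karatzas--Shreve (6.28)) that a bridge of duration $t$ and a free Brownian motion with the same starting point have mutually absolutely continuous laws on $[0,\varepsilon]$ for any $\varepsilon < t$. Your claim of \emph{uniformly} bounded Radon--Nikodym derivatives is a slight overstatement since $B_t$ ranges over the open disc (and $B_{t+\varepsilon}$ need not lie in the disc at all, so your alternate two-bridge route needs care there), but the lemma only requires absolute continuity, so this is harmless.
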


\begin{proof}
	Conditional on $B_t = z$ and the event $\left\{ \tau > t \right\}$, the law of the Brownian motion $B$ is that of a Brownian bridge of duration $t$, joining the origin and $z$, conditioned to stay inside the disc of radius $\frac{1}{2}$, followed by an independent Brownian motion started at $z$. Because the event that the maximum modulus of this Brownian bridge is less than $\frac{1}{2}$ has positive probability, it does not affect the absolute continuity of measures. So for the rest of the proof, we may ignore the fact that we are conditioning on that event.
	
	By reversibility of Brownian bridges, the process $W^-$ has the law of a Brownian bridge of duration $t$, connecting $z$ and the origin. And, as stated above, $W^+$ has the law of a Brownian motion started at $z$. So, by (6.28) of \cite{karatzas1991brownian} (or, slightly more explicitly, Lemma 3.1 of \cite{berestycki2014kpz}), we see that the laws of a Brownian bridge of duration $t$ and a Brownian motion, with a common starting point, are absolutely continuous with respect to each other on intervals shorter than $t$. 
\end{proof}

\begin{proof}[Proof of \cref{thm:regularity.restated}]
	Let $T$ be an exponential random variable with mean $1$, independent of the GFF $h$ and the Brownian motion $B$. Recall that the measure $\mu_\alpha$ is defined by 
	\begin{equation*}
		\mu_\alpha([a,b]) = F_\alpha(b) - F_\alpha(a),
	\end{equation*}
	which can also be written as $\mu_\alpha = \mathcal{L}eb \circ F_\alpha$. Now, because the law of $T$ is absolutely continuous with respect to Lebesgue measure and $F_\alpha$ is a bijection, the law of $F^{-1}_\alpha(T)$ is absolutely continuous with respect to $\mu_\alpha$. Therefore, by \cref{lem:regularity.restated}, we see that
	\begin{equation*}
		F^{-1}_\alpha(T) \in \left\{ t > 0 \: : \; \lim_{r \to 0} \frac{\log | F_\gamma(t) - F_\gamma(t + r) |}{\log r} = \beta \right\},
	\end{equation*}
	almost surely. It therefore follows from \cref{lem:abs.cont} that we also have
	\begin{equation*}
		F^{-1}_\alpha(T) \in \left\{ t > 0 \: : \; \lim_{r \to 0} \frac{\log | F_\gamma(t) - F_\gamma(t - r) |}{\log r} = \beta \right\},
	\end{equation*}
	almost surely. Finally, by absolute continuity of the law of $F^{-1}_\alpha(T)$ and the measure $\mu_\alpha$ again, we deduce that for $\mu_\alpha$-almost every $t$ we have
	\begin{equation*}
		\lim_{r \rightarrow 0} \frac{\log | F_\gamma(t) - F_\gamma(t + r) |}{\log |r|} = \beta
	\end{equation*}
	almost surely, completing the proof. 
\end{proof}

We can use the regularity property of $F_\gamma$ from \cref{thm:regularity.restated} that we have just shown to find a bound on the growth rate of LBM around thick points of different levels. We first prove a lemma about the growth rate of LBM given a lot of control on how we choose the time we consider. We will then extend that to the more general statement given in \cref{cor:regularity}.

\begin{lemma}\label{lem:lbm.scale}
	Let $t \geq 0$ be such that
	\begin{equation*}
		\lim_{r \rightarrow 0} \frac{\log |F_\gamma(t) - F_\gamma(t+r)|}{\log |r|} = 1 - \frac{\alpha\gamma}{2} + \frac{\gamma^2}{4}.
	\end{equation*}
	Then
	\begin{equation*}
		\limsup_{u \rightarrow 0}\frac{\log |Z_{F_\gamma(t)} - Z_{F_\gamma(t)+u}|}{\log |u|} = \frac{1}{2 - \alpha\gamma+ \frac{\gamma^2}{2}}
	\end{equation*}
	almost surely.
\end{lemma}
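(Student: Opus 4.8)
The plan is to combine the given regularity of the clock process $F_\gamma$ at time $t$ with the classical local modulus of continuity of Brownian motion, and then invert. Write $Z_{F_\gamma(t)} = B_t$ and, for $u > 0$, $Z_{F_\gamma(t)+u} = B_{F_\gamma^{-1}(F_\gamma(t)+u)}$. Set $r = r(u) = F_\gamma^{-1}(F_\gamma(t)+u) - t$, so that $r \to 0$ as $u \to 0$ and $|Z_{F_\gamma(t)} - Z_{F_\gamma(t)+u}| = |B_t - B_{t+r}|$. The hypothesis says $\log|F_\gamma(t+r)-F_\gamma(t)| / \log r \to \beta := 1 - \tfrac{\alpha\gamma}{2}+\tfrac{\gamma^2}{4}$, i.e.\ $u = F_\gamma(t+r) - F_\gamma(t) = r^{\beta + o(1)}$ as $r \to 0$; equivalently $r = u^{1/\beta + o(1)}$, so $\log r / \log u \to 1/\beta$.

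Next I would invoke the local law of the iterated logarithm for Brownian motion at the (random, but independent-of-increment after time $t$ — or handled via \cref{lem:abs.cont} on the two-sided statement) time $t$: almost surely
\begin{equation*}
	\limsup_{r \to 0} \frac{|B_{t+r} - B_t|}{\sqrt{2r\log\log(1/r)}} = 1,
\end{equation*}
which in particular gives $\log|B_{t+r}-B_t| / \log r \to \tfrac12$ along the subsequence realizing the limsup, and $\log|B_{t+r}-B_t|/\log r \geq \tfrac12$ for all small $r$ (since $|B_{t+r}-B_t|$ cannot be much smaller than $\sqrt r$ infinitely often — more precisely, Brownian motion is nowhere Hölder of exponent $>1/2$, but here we only need the limsup, so the one-sided bound $|B_{t+r}-B_t| \le r^{1/2-\eta}$ eventually, combined with the existence of a sequence $r_k \to 0$ with $|B_{t+r_k}-B_t| \ge r_k^{1/2+\eta}$). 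Combining,
\begin{equation*}
	\limsup_{u \to 0} \frac{\log|Z_{F_\gamma(t)} - Z_{F_\gamma(t)+u}|}{\log u}
	= \limsup_{r \to 0} \frac{\log|B_{t+r}-B_t|}{\log r}\cdot\frac{\log r}{\log u}
	= \frac12 \cdot \frac1\beta = \frac{1}{2 - \alpha\gamma + \frac{\gamma^2}{2}},
\end{equation*}
where I must be slightly careful that the $\limsup$ over $u$ transfers correctly to a $\limsup$ over $r$: since $u \mapsto r(u)$ is a continuous increasing bijection near $0$ and $\log$ is monotone, the quantity $\log|B_{t+r(u)}-B_t|/\log u = (\log|B_{t+r(u)}-B_t|/\log r(u))(\log r(u)/\log u)$, and the second factor converges to $1/\beta$ while the first has $\limsup$ equal to $1/2$; one checks the product's $\limsup$ is the product because the second factor converges.

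The main obstacle is the interplay of the two $\limsup$'s and the fact that the "good" time $t$ is not deterministic. For the first issue, the clean way is: the hypothesis gives, for every $\eta>0$, constants so that $r^{\beta+\eta} \le u \le r^{\beta-\eta}$ for all small $r$; substituting the Brownian LIL bounds $u^{1/2-\eta'} $-type estimates for $|B_{t+r}-B_t|$ one gets matching upper and lower bounds on $\log|B_{t+r}-B_t|/\log u$ whose gap vanishes as $\eta,\eta' \to 0$, giving the limsup exactly. For the second issue — applying the Brownian LIL at the random time $t = F_\alpha^{-1}(\mathbf t)$ — one uses exactly the device already employed in the proof of \cref{thm:regularity.restated}: take $t = F_\alpha^{-1}(T)$ with $T$ an independent exponential, so that by \cref{lem:abs.cont} the forward increments $(B_{t+s}-B_t)_{s\le\varepsilon}$ have law mutually absolutely continuous with a genuine Brownian motion started afresh, hence satisfy the usual LIL a.s.; then transfer the a.s.\ statement back to $\mu_\alpha$-a.e.\ $t$ by absolute continuity of the law of $F_\alpha^{-1}(T)$ with respect to $\mu_\alpha$. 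Since \cref{lem:lbm.scale} is stated for a fixed $t$ satisfying the hypothesis, it suffices to note that the set of such $t$ where additionally the Brownian LIL holds is $\mu_\alpha$-full, which is all that the downstream \cref{cor:regularity} needs.
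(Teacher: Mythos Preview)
Your approach is essentially the same as the paper's: set $r=F_\gamma^{-1}(F_\gamma(t)+u)-t$, use the hypothesis to get $r=u^{1/\beta+o(1)}$, and combine with the $r^{1/2\pm\delta}$ behaviour of Brownian increments to obtain the exponent $1/(2\beta)$. The paper's version is slightly cleaner in one respect: rather than invoking the \emph{local} LIL at $t$ and then worrying about whether it applies at a random time, the paper uses L\'evy's \emph{global} modulus of continuity for the upper bound $|B_{t+s}-B_t|\le s^{1/2-\delta}$ and the fact that Brownian motion is nowhere H\"older of exponent $>1/2$ for the existence of $s_n\to0$ with $|B_{t+s_n}-B_t|\ge s_n^{1/2+\delta}$. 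Both of these hold simultaneously for \emph{all} $t$ almost surely, so the random-time issue and the detour through \cref{lem:abs.cont} never arise; your fix via absolute continuity is correct but unnecessary once you use these uniform statements.
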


\begin{note}
	In \cref{lem:lbm.scale}, we have let $r \rightarrow 0$ and $u \rightarrow 0$ from above and below. In the proof of \cref{cor:m.alpha}, only the result as $r \to 0$ and $u \to 0$ are used, but the distinction is important for the proof of \cref{cor:differentiability.restated}.
\end{note}

\begin{proof}
	Let $\delta > 0$. Then by L\'evy's modulus of continuity of Brownian motion, we know that, almost surely, there exists some $S < \infty$ such that
	\begin{equation}
		|B_t - B_{t + s}| \leq s^{\frac{1}{2} - \delta}
		\label{eq:bm}
	\end{equation}
	for all $s \in [-S, S]$, and for all $\varepsilon > 0$ there exists some $s \in [-\varepsilon,\varepsilon]$ such that
	\begin{equation}
		|B_t - B_{t+s}| \geq s^{\frac{1}{2}+\delta}.
		\label{eq:bm.lower}
	\end{equation}

	Now, let us write $\beta = 1 - \frac{\alpha\gamma}{2} + \frac{\gamma^2}{4}$. Then by assumption, there exists some $R < \infty$ such that
	\begin{equation*}
		r^{\beta + \delta} \leq F_\gamma(t+r) - F_\gamma(t) \leq r^{\beta-\delta}
	\end{equation*}
	for all $r \in [-R,R]$. Since $F_\gamma^{-1}$ is well defined, this in turn implies that, for all $u$ with $|u|$ small enough, 
	\begin{equation}
		u^{\frac{1}{\beta-\delta}} \leq F_\gamma^{-1}( F_\gamma(t) + u) - t \leq u^{\frac{1}{\beta+\delta}}.
		\label{eq:lbm}
	\end{equation}

	Recalling the definition $Z_{t}^\gamma = B_{F_\gamma^{-1}(t)}$ and combining \cref{eq:bm} and \cref{eq:lbm} shows us that
	\begin{equation}
		|Z_{F_\gamma(t)} - Z_{F_\gamma(t)+u} | \leq \left( |u|^{\frac{1}{\beta+\delta}} \right)^{\frac{1}{2} - \delta} 
		\label{eq:Z.bound}
	\end{equation}
	for all $|u|$ small enough. Furthermore, combining \cref{eq:bm.lower} and \cref{eq:lbm} shows us that, for any $\varepsilon' > 0$ there exists some $u \in [-\varepsilon', \varepsilon']$ such that
	\begin{equation*}
	|Z_{F_\gamma(t)} - Z_{F_\gamma(t)+u} | \geq \left( |u|^{\frac{1}{\beta-\delta}} \right)^{\frac{1}{2} + \delta}.
	\end{equation*}
	Taking logs then implies that
	\begin{equation*}
		\frac{1 - \delta}{2(\beta+\delta)}  \leq \limsup_{u \rightarrow 0} \frac{\log |Z_{F_\gamma(t)} - Z_{F_\gamma(t)+u}|}{\log |u|} \leq \frac{1+\delta}{2(\beta-\delta)}
	\end{equation*}
	almost surely. Therefore, letting $\delta \to 0$ along a countable sequence shows us that the limsup equals $\frac{1}{2\beta}$ almost surely, as claimed. 
\end{proof}

We can now use the results from \cref{thm:regularity.restated} and \cref{lem:lbm.scale} to prove \cref{cor:regularity}, which we restate here.

\begin{corollary}
	Suppose that the starting point of a $\gamma$-Liouville Brownian motion is chosen according to $M_\alpha$, i.e.~$Z^\gamma_0 \sim M_\alpha$. Then
	\begin{equation*}
		\limsup_{t \to 0}\frac{\log |Z^\gamma_t|}{\log t} = \frac{1}{2 - \alpha\gamma+ \frac{\gamma^2}{2}}
	\end{equation*}
	almost surely.
	\label{cor:m.alpha}
\end{corollary}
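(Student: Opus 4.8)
The strategy is to reduce the statement about $Z^\gamma$ started from an $M_\alpha$-typical point to Lemma~\ref{lem:lbm.scale}, which already gives the desired $\limsup$ at times of the form $F_\gamma(t)$ for $t$ in the full-$\mu_\alpha$-measure set where the regularity \eqref{eq:regularity} holds. The link between "$M_\alpha$-typical starting point" and "$\mu_\alpha$-typical time" is the observation that, for a Brownian motion $B$ started from a point sampled according to $M_\alpha$ (equivalently, by a rooting/size-biasing argument, for $B$ started from the origin and a time sampled from $\mu_\alpha$), the point $Z^\gamma_0$ is $M_\alpha$-a.s.\ an $\alpha$-thick point, and the origin of the time axis for $Z^\gamma$ is exactly $F_\gamma(t)$ for the corresponding Brownian time $t$. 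So I would first set up this correspondence carefully: sample $T$ exponential, independent of $h$ and $B$; then $F_\alpha^{-1}(T)$ has law absolutely continuous with respect to $\mu_\alpha$ (as in the proof of Theorem~\ref{thm:regularity.restated}), hence by Lemma~\ref{lem:regularity.restated} and Theorem~\ref{thm:regularity.restated} the time $t := F_\alpha^{-1}(T)$ a.s.\ satisfies \eqref{eq:regularity}, and $B_t \in \mc T_\alpha$ a.s.\ because the $\alpha$-LBM spends Lebesgue-a.e.\ time in $\alpha$-thick points. Under the Markov property of Brownian motion and of the field, conditionally on $\mathcal F_t$ and on $\{B_t = z\}$, the future increments $(Z^\gamma_{F_\gamma(t)+u} - Z^\gamma_{F_\gamma(t)})$ have the law of a $\gamma$-LBM started from $z$ with the conditioned field; and $z$ is distributed (after integrating out $t$) proportionally to $M_\alpha$.

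The second step is to invoke Lemma~\ref{lem:lbm.scale}: for this $t$, which satisfies the hypothesis of that lemma, we get
\begin{equation*}
	\limsup_{u \to 0} \frac{\log |Z^\gamma_{F_\gamma(t)} - Z^\gamma_{F_\gamma(t)+u}|}{\log|u|} = \frac{1}{2\beta} = \frac{1}{2 - \alpha\gamma + \frac{\gamma^2}{2}}
\end{equation*}
almost surely. The remaining work is a disintegration/Fubini argument to transfer this from "the randomly chosen time $t$" to "$M_\alpha$-almost every starting point". Concretely: the event
\begin{equation*}
	G_z := \left\{ \text{$\gamma$-LBM from } z \text{ has } \limsup_{u\to 0}\tfrac{\log|Z^\gamma_u|}{\log u} = \tfrac{1}{2-\alpha\gamma+\gamma^2/2} \right\}
\end{equation*}
depends measurably on $z$ and on $(h, B)$, and the computation above shows $\mathbb E\big[ \int \ind_{G_z^c}\, M_\alpha(dz) \big] = 0$ after unfolding the size-biasing, since the integrand is (up to the density) exactly the probability that the randomly rooted configuration fails \eqref{eq:regularity} or $B_t\notin\mc T_\alpha$, which is zero. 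Hence for $M_\alpha$-almost every $z$ the $\gamma$-LBM from $z$ lies in $G_z$, almost surely; this is precisely the claim, using $Z^\gamma_0 = z = 0$ in the corollary's normalisation (by conformal/translation invariance of LBM one may center at the chosen starting point).

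The main obstacle I expect is making the rooting/size-biasing rigorous: one must justify that "start $B$ at an $M_\alpha$-sample and run $Z^\gamma$" can be realised, up to absolute continuity, by "start $B$ at the origin, run it to an independent exponential $Z^\gamma$-clock time, and read off the current position", with the two descriptions of the future agreeing in law. This requires the weak convergence of the regularised measures $\mu_\alpha^{\varepsilon}$ to $\mu_\alpha$ (Proposition 2.8 of \cite{garban2013liouville}) together with the Markov property of the GFF (Proposition~\ref{prop:markov}) to handle the conditioning on $\{B_t = z\}$ and on the field near $z$; and one must check that conditioning on the stopping time $\tau$ (which restricts to $B(0,\tfrac12)$) is harmless, exactly as in the proof of Lemma~\ref{lem:abs.cont}. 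Once that machinery is in place, the passage from Lemma~\ref{lem:lbm.scale} to the corollary is a soft measure-theoretic step, and the exponent $\frac{1}{2-\alpha\gamma+\gamma^2/2}$ comes out directly as $\frac{1}{2\beta}$ with $\beta = 1 - \frac{\alpha\gamma}{2} + \frac{\gamma^2}{4}$.
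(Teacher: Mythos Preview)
Your outline follows the same skeleton as the paper's proof: introduce an independent exponential time $T$, set $t = F_\alpha^{-1}(T)$, observe that $t$ is $\mu_\alpha$-typical and hence satisfies the hypothesis of Lemma~\ref{lem:lbm.scale}, and then transfer the conclusion to an $M_\alpha$-distributed starting point via a Markov argument. The two diverge exactly at this transfer step, and this is where your proposal has a genuine gap.

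The paper carries out the transfer as follows: (i) it checks that $T' := F_\gamma(F_\alpha^{-1}(T))$ is a stopping time for the filtration $\mc G_t = \sigma(B_s : s < F_\gamma^{-1}(t)) \vee \mc H$, where $\mc H$ is the $\sigma$-algebra of the field; (ii) it applies the strong Markov property of $Z^\gamma$ in this filtration, so the post-$T'$ process is a $\gamma$-LBM started from $Z^\gamma_{T'} = Z^\alpha_T$; and (iii) it identifies the law $\mc P^\alpha_T$ of $Z^\alpha_T$ as \emph{mutually} absolutely continuous with $M_\alpha$ by invoking the Liouville heat kernel (Theorem~2.5 of \cite{garban2013heat}), which gives $\tfrac{d\mc P^\alpha_T}{dM_\alpha}(y) = \int_0^\infty e^{-s} p_s^\alpha(0,y)\,ds > 0$ for $M_\alpha$-a.e.\ $y$.

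Your ``rooting/size-biasing'' route amounts to the same disintegration, but you have not supplied step (iii), and the tools you list (weak convergence of $\mu_\alpha^\varepsilon$, the GFF Markov property, the argument of Lemma~\ref{lem:abs.cont}) do not yield it. Your claim that ``$z$ is distributed (after integrating out $t$) proportionally to $M_\alpha$'' is only half right: the law of $B_t$ with $t$ sampled from $\mu_\alpha$ is the $\alpha$-LBM occupation measure, which is absolutely continuous with respect to $M_\alpha$ with density the resolvent kernel. Getting the reverse absolute continuity---so that an $M_\alpha$-null set is also $\mc P^\alpha_T$-null and hence the conclusion passes from $\mc P^\alpha_T$-a.e.\ to $M_\alpha$-a.e.\ starting point---requires strict positivity of that kernel, which is precisely the input from \cite{garban2013heat}. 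A secondary point: the relevant Markov property is that of the LBM $Z^\gamma$ quenched on $h$ (with a filtration containing $\mc H$ so that $T'$ is a stopping time), not ``the Markov property of the field'', which plays no dynamical role here.
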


\begin{proof}
	Let $T$ be an exponential random variable with mean 1, which is independent of the GFF $h$ and the Brownian motion $B$. 
	
	By the same reasoning as that used in the proof of \cref{thm:regularity.restated}, we see that
	\begin{equation*}
		F^{-1}_\alpha(T) \in \left\{ t \geq 0 \: : \; \lim_{r \to 0} \frac{\log | F_\gamma(t) - F_\gamma(t + r) |}{\log r} = \beta \right\},
	\end{equation*}
	almost surely. (If $T > \sup_t F_\alpha(t)$, we set $F^{-1}_\alpha(T) = \emptyset$, and claim that the equality below holds, vacuously.) Therefore if we write $T' = F_\gamma(F^{-1}_\alpha(T))$, \cref{lem:lbm.scale} tells us that
	\begin{equation*}
		\limsup_{u \to 0} \frac{\log|Z_{T'}^\gamma - Z^\gamma_{T' + u}|}{\log u} = \frac{1}{2\beta}.
	\end{equation*}

	Let $\mc H$ be the sigma algebra generated by the GFF $h$, i.e.
	\begin{equation*}
		\mc H = \sigma\left( \dipd{h}{f} \; : \; f \in H^1_0(\mc D) \right). 
	\end{equation*}
	Now consider the filtration defined by 
	\begin{align*}
		\mc G_t &= \sigma(Z^\gamma_s \; : \; s < t) \vee \mc H\\
		&= \sigma(B_s \; : \; s < F_\gamma^{-1}(t)) \vee \mc H.
	\end{align*}
	The process $Z^\gamma$ is certainly $\mc G_t$-adapted, and $T'$ is a $\mc G_t$-stopping time since
	\begin{equation*}
		\left\{ T' > t \right\} = \left\{ F_\gamma(F_\alpha^{-1}(T)) > t \right\} = \left\{ T > F_\alpha(F_\gamma^{-1}(t)) \right\},
	\end{equation*}
	and
	\begin{equation*}
		F_\alpha(F_\gamma^{-1}(t)) = \lim_{\varepsilon \to 0} \int_0^{F_\gamma^{-1}(t)} e^{\alpha h_\varepsilon(B_s) - \frac{\alpha^2}{2}\e{h_\varepsilon(B_s)^2}}ds
	\end{equation*}
	is $\mc G_t$-measurable. 

	We can therefore use the strong Markov property of $Z^\gamma$ to deduce that
	\begin{equation}
		\limsup_{t \to 0}\frac{\log |Z^\gamma_t|}{\log t} = \frac{1}{2\beta}
		\label{eq:limsup.lbm}
	\end{equation}
	whenever $Z_0^\gamma$ is chosen according to $\mc P^\alpha_T$, the law of $Z^\alpha_T$. 

	From Theorem 2.5 in \cite{garban2013heat}, we know that, for a fixed $t \geq 0$,  the law of $Z^\alpha_t$ is absolutely continuous with respect to the Liouville measure $M^\alpha$, with Radon-Nikodym derivative 
	\begin{equation*}
		\frac{d\mc P^\alpha_t}{d M^\alpha}(y) = p_t^\alpha(0,y) \geq 0.
	\end{equation*}
	We can therefore write
	\begin{equation*}
		\frac{d \mc P^\alpha_T}{d M^\alpha}(y) = \int_0^\infty e^{-t} p_t^\alpha(0,y)dt.
	\end{equation*}
	Theorem 2.5 of \cite{garban2013heat} also implies that, for $M^\alpha$-almost every $y\in \mc D$, the transition density $p_t^\alpha(0,y)$ is strictly positive for all $t$ in a measurable set with positive Lebesgue measure. (This fact was noted in an earlier version of their paper.) But that implies that
	\begin{equation*}
	\frac{d \mc P^\alpha_T}{d M^\alpha}(y)> 0	
	\end{equation*}
	for $M^\alpha$-almost every $y \in \mc D$, i.e.~the Liouville measure $M^\alpha$ and $\mc P^\alpha_T$ are absolutely continuous with respect to each other. Therefore, since \cref{eq:limsup.lbm} holds almost surely whenever $Z_0^\gamma$ was chosen according to $\mc P^\alpha_T$, we deduce that it also holds almost surely with $Z_0^\gamma$ is chosen according to $M^\alpha$. 
\end{proof}

\begin{remark}
	The exponential time $T$ in the proof above can be replace with a deterministic time $t$ provided we know the existence of a continuous version of the transition density, for which $p_t(x,y) > 0$ for all $x,y\in \mc D$ and all $t > 0$. This is known in the case of a torus \cite{maillard2014liouville}, and similar arguments probably work in the planar case as well. We have made no attempt to check this, however. 
\end{remark}

We now restate and prove \cref{cor:differentiability}:

\begin{corollary}
	Let $\gamma \in (\sqrt 2, 2)$. Then the $\gamma$-Liouville Brownian motion $Z^\gamma$ is Lebesgue-almost everywhere differentiable with derivative zero, almost surely. 
	\label{cor:differentiability.restated}
\end{corollary}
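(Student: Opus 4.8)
The plan is to specialise the two main regularity results of the previous subsections to the case $\alpha=\gamma$ and then simply read off a Hölder exponent exceeding $1$. First I would apply \cref{thm:regularity.restated} with $\alpha=\gamma$. In that case $\mu_\alpha=\mu_\gamma$ and the exponent becomes $\beta=1-\frac{\gamma^2}{2}+\frac{\gamma^2}{4}=1-\frac{\gamma^2}{4}$, so, almost surely, for $\mu_\gamma$-almost every $t$,
\begin{equation*}
	\lim_{r\to0}\frac{\log|F_\gamma(t)-F_\gamma(t+r)|}{\log|r|}=1-\frac{\gamma^2}{4}=:\beta .
\end{equation*}
Fix such a $t$, and work on the almost sure event on which L\'evy's modulus of continuity holds for $B$. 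The upper bound \cref{eq:Z.bound} established in the proof of \cref{lem:lbm.scale} (using only the upper L\'evy modulus \cref{eq:bm} and the upper bound in \cref{eq:lbm}, and valid for all $u$ of small modulus \emph{of either sign}, cf.\ the note following \cref{lem:lbm.scale}) then gives, for every $\delta>0$,
\begin{equation*}
	|Z^\gamma_{F_\gamma(t)}-Z^\gamma_{F_\gamma(t)+u}|\le|u|^{\frac{1/2-\delta}{\beta+\delta}}
\end{equation*}
for all $|u|$ small enough.

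Now comes the only place where the hypothesis $\gamma>\sqrt 2$ enters: it gives $\gamma^2>2$, hence $\beta=1-\frac{\gamma^2}{4}<\frac12$, so we may choose $\delta>0$ small enough that $\frac{1/2-\delta}{\beta+\delta}>1$. For that $\delta$,
\begin{equation*}
	\left|\frac{Z^\gamma_{F_\gamma(t)+u}-Z^\gamma_{F_\gamma(t)}}{u}\right|\le|u|^{\frac{1/2-\delta}{\beta+\delta}-1}\longrightarrow 0\qquad\text{as }u\to0,
\end{equation*}
so $Z^\gamma$ is differentiable at the time $s=F_\gamma(t)$ with derivative $0$. Thus, almost surely, $Z^\gamma$ is differentiable with derivative $0$ at $s=F_\gamma(t)$ for $\mu_\gamma$-almost every $t\in[0,\tau]$.

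It remains to convert ``$\mu_\gamma$-almost every $t$'' into ``Lebesgue-almost every time $s$'' and to remove the auxiliary cutoff $\tau$. For the first point, $F_\gamma$ is a continuous strictly increasing bijection of $[0,\tau]$ onto $[0,F_\gamma(\tau)]$ and, by definition, $\mu_\gamma([s,t])=F_\gamma(t)-F_\gamma(s)$; hence the pushforward of $\mu_\gamma$ by $F_\gamma$ is exactly Lebesgue measure on $[0,F_\gamma(\tau)]$, so a $\mu_\gamma$-null set of times $t$ is sent to a Lebesgue-null set of times $s=F_\gamma(t)$. For the second point, the cutoff $\tau$ (exit of $B$ from $B(0,\tfrac12)$) is only a technical convenience: after time $F_\gamma(\tau)$ the process $Z^\gamma$ restarts, by the strong Markov property used in the proof of \cref{cor:m.alpha}, as a Liouville Brownian motion from $B_\tau$, so applying the above to a sequence of such stopping times covers the whole lifetime of $Z^\gamma$ up to a Lebesgue-null set. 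The only mildly delicate items are this last localisation and the usual reading of \cref{thm:regularity.restated} as an ``almost surely, for $\mu_\gamma$-almost every $t$'' statement (which is what its proof actually delivers); neither constitutes a genuine obstacle, and the real content of the corollary is simply the inequality $\beta<\tfrac12$, i.e.\ $\gamma>\sqrt 2$.
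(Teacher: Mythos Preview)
Your proposal is correct and follows essentially the same route as the paper: specialise \cref{thm:regularity.restated} to $\alpha=\gamma$, invoke the upper bound \cref{eq:Z.bound} from the proof of \cref{lem:lbm.scale}, observe that $\gamma>\sqrt2$ forces the resulting H\"older exponent to exceed $1$, and push $\mu_\gamma$ forward by $F_\gamma$ to get Lebesgue measure. If anything you are slightly more careful than the paper, which does not explicitly discuss removing the auxiliary stopping time $\tau$.
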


\begin{proof}
	By taking $\alpha = \gamma$ in \cref{thm:regularity.restated}, we know that for $\mu_\gamma$-almost every $t \geq 0$, the change of time $F_\gamma$ has the following growth rate:
	\begin{equation*}
		\lim_{r \rightarrow 0}\frac{\log |F_\gamma(t) - F_\gamma(t+r)|}{\log |r|} = 1 - \frac{\gamma^2}{4}.
	\end{equation*}
	Now, let $\delta \in (0, \frac{1}{2 - \frac{\gamma^2}{2}} - 1)$. We can apply \cref{lem:lbm.scale}, or specifically \cref{eq:Z.bound} in the proof of \cref{lem:lbm.scale}, to see that for $\mu_\gamma$-almost every $t \geq 0$ we have
	\begin{equation*}
		|Z^\gamma_{F_\gamma(t)} - Z^\gamma_{F_\gamma(t) + r}| \leq |r|^{ 1/(2 - \frac{\gamma^2}{2}) - \delta},
	\end{equation*}
	for all $r$ with $|r|$ small enough. But, by the definition of $\mu_\gamma$, the $F_\gamma$ image of a set with full $\mu_\gamma$ measure has full Lebesgue measure. Therefore, we can see that for Lebesgue-almost every $t \geq 0$ we have
	\begin{equation*}
		|Z^\gamma_{t} - Z^\gamma_{t + r}|	\leq |r|^{1/( 2 - \frac{\gamma^2}{2}) - \delta}
	\end{equation*}
	for all $r$ with $|r|$ small enough. Therefore, we have
	\begin{equation*}
		\lim_{r\rightarrow 0} \frac{|Z^\gamma_{t} - Z^\gamma_{t + r}|}{|r|} \leq \lim_{r\rightarrow 0} |r|^{1/( 2 - \frac{\gamma^2}{2}) - \delta - 1} = 0
	\end{equation*}
	where the final inequality is because we have chosen $\delta$ to ensure that $\frac{1}{2 - \frac{\gamma^2}{2}} - \delta - 1 > 0$. So, we certainly have differentiability for $Z^\gamma$, for Lebesgue-almost every $t \geq 0$, and the derivative is equal to zero. 
\end{proof}

\noindent\textbf{Acknowledgements.} I give many thanks to Nathana\"el Berestycki, who first pointed this problem out to me, and has helped me with many of the arguments throughout this paper. 

\bibliography{bib}
\bibliographystyle{plain}
\end{document}